\documentclass[11pt,reqno,a4paper]{amsart}
\usepackage{graphicx}
\usepackage{amssymb,amsmath}
\usepackage{amsthm}
\usepackage{color,graphicx}
\usepackage{hyperref}
\usepackage{color}
\usepackage{mathabx}
\usepackage{subfigure}
\usepackage{appendix}

\usepackage{verbatim}

\usepackage{mathrsfs}

\usepackage{relsize} %tornar simbolos matemáticos maiores (usando \mathlarger{} )

\usepackage{mathtools} %para usar o comando \xrightharpoonup{\quad \:}

%=============================================
%\usepackage[T1]{fontenc}
%\linespread{1.0} %espaçamento entre linhas
%=======================================

\setlength{\textwidth}{16cm} \setlength{\textheight}{22 cm}
\addtolength{\oddsidemargin}{-1.5cm} \addtolength{\evensidemargin}{-1.5cm}

\newcommand{\be}{\begin{equation}}

\newcommand{\ee}{\end{equation}}

\newtheorem{prop}{Proposition}[section]

\newcommand{\R}{{\mathbb R}}

\newcommand{\Ker}{{\rm \,Ker}}

%%%%%%%%%%%%%%%%%%%%%%%%%%%%%%%%%%%%%%%%%%%%%%%%%%%%%%%%%%%%%%%%%%%%%%%
%%%%%%%%%%%%%%%%%%%%%%%%%%%%%%%%%%%%%%%%%%%%%%%%%%%%%%%%%%%%%%%%%%%%%%%

%%%%%%%%%%%%%%%%%%%%%%%%%%%%%%%%%%%%%%%%%%%%%%%%%%%%%%%%%%%%%%%%%
%%%%%%%%%%%%%%%%%%%%%%%%%%%%%%%%%%%%%%%%%%%%%%%%%%%%%%%%%%%%%%%%%
%\newcommand{\bbox}{\rule[1mm]{1ex}{1ex}}
%\newcommand{\seccion}[1]{\section{#1}\setcounter{equation}{0}}
\numberwithin{equation}{section}
\numberwithin{figure}{section}

\newtheorem{theorem}{Theorem}[section]
\newtheorem{proposition}[theorem]{Proposition}
\newtheorem{remark}[theorem]{Remark}
\newtheorem{lemma}[theorem]{Lemma}
\newtheorem{corollary}[theorem]{Corollary}
\newtheorem{definition}[theorem]{Definition}

%%%%%%%%%%%%%%%%%%%%%%%%%%%%%%%%%%%%%%%%%%%%%%%%%%%%%%%%%%%%%%%%
%%%%%%%%%%%%%%%%%%%%%%%%%%%%%%%%%%%%%%%%%%%%%%%%%%%%%%%%%%%%%%%%
\begin{document}
\vglue-1cm \hskip1cm
\title[Periodic Cubic NLS]{Orbital stability of periodic standing waves for the cubic fractional nonlinear Schr\"odinger equation}

\begin{center}

\subjclass[2020]{35R11, 35B35, 35Q55}

\keywords{Fractional Schrödinger equation, existence and uniqueness of minimizers, small-amplitude periodic waves, orbital stability.}

\author[G.E.B. Moraes]{Gabriel E. Bittencourt Moraes}

\address{Gabriel E. Bittencourt Moraes - State University of
	Maring\'a, Maring\'a, PR, Brazil.}
\email{pg54546@uem.br}

\author[H. Borluk]{Handan Borluk}

\address{Handan Borluk - Ozyegin University, Department of Natural and Mathematical Sciences, Cekmekoy, Istanbul, Turkey. }
\email{handan.borluk@ozyegin.edu.tr }

\author[G. de Loreno]{Guilherme de Loreno}

\address{Guilherme de Loreno - State University of
	Maring\'a, Maring\'a, PR, Brazil.}
\email{pg54136@uem.br}

\author[G.M. Muslu]{Gulcin M. Muslu}

\address{Gulcin M. Muslu - Istanbul Technical University, Department of Mathematics, Maslak,
	Istanbul,  Turkey.}
\email{gulcin@itu.edu.tr }

\author[F. Natali]{F\'abio Natali}

\address{F\'abio Natali - Department of Mathematics, State University of
	Maring\'a, Maring\'a, PR, Brazil. }
\email{fmanatali@uem.br }

\maketitle

\vspace{3mm}

\end{center}

%\begin{abstract}
%\noindent The existence and orbital stability concerning periodic standing waves solutions for the nonlinear fractional Schrödinger equation with cubic nonlinearity will be shown in this manuscript. The existence is determined by using a minimizing constrained problem in the complex setting and we show that the respective real solution is always positive. The orbital stability is then determined combining some tools regarding positive operators, the oscillation theorem for fractional Hill operators and a Vakhitov-Kolokolov condition, well known for Schr\"odinger equations. Also, we perform a numerical approach ....
%\end{abstract}

\begin{abstract}
\noindent In this paper,  the existence and orbital stability of the periodic standing waves solutions for the nonlinear fractional Schrödinger (fNLS) equation with cubic nonlinearity is studied. The existence is determined by using a minimizing constrained problem in the complex setting and we it is showed that the corresponding real solution is always positive. The orbital stability is proved by combining some tools regarding positive operators, the oscillation theorem for fractional Hill operators and a Vakhitov-Kolokolov condition, well known for Schr\"odinger equations.
We then perform a numerical approach to generate periodic standing wave solutions of the fNLS equation by using the Petviashvili’s iteration method. We also investigate the Vakhitov-Kolokolov condition numerically which cannot be obtained analytically for some values of the order of the fractional derivative.

\end{abstract}

\section{Introduction}

In this paper, we present results concerning existence and orbital stability of periodic standing waves for the fractional nonlinear Schrödinger equation (fNLS) in the focusing case given as
\begin{equation}\label{fNLS1}
	iu_t - (-\Delta)^su + |u|^2u = 0.
\end{equation}
Here, $u:\mathbb{T}\times \mathbb{R}\longrightarrow\mathbb{C}$ is a complex-valued function and $2\pi$-periodic with respect to the first variable with $\mathbb{T}:= [-\pi, \pi]$. The fractional Laplacian $(-\Delta)^s$ is defined as a pseudo-differential
operator
\begin{equation}\label{FLaplacian}
\widehat{(-\Delta)^sg}(\xi)=|\xi|^{2s}\widehat{g}(\xi),
\end{equation}
where $\xi \in \mathbb{Z}$ and $s \in (0,1]$ (see \cite{RoncalStinga}). The fNLS equation was introduced by Laskin in \cite{Laskin2000} and \cite{Laskin2002} and it appears in several physical applications such as fluid dynamics, quantum mechanics, in the description of Boson stars and water wave dynamics (\cite{IonescuPusateri}, \cite{KirkpatrickLenzmannStaffilani} and \cite{Rabinowitz}).

Equation \eqref{fNLS1} admits the conserved quantities $E,F:H^s_{per} \rightarrow \mathbb{R}$ which are given as
\begin{equation}\label{E}
	E(u)=\frac{1}{2}\int_{-\pi}^{\pi} |(-\Delta)^{\frac{s}{2}} u|^2-\frac{1}{2}|u|^4\; dx,
\end{equation}
and
\begin{equation}\label{F}
	F(u)=\frac{1}{2} \int_{-\pi}^{\pi}|u|^2\; dx.
\end{equation}

When $s=1$, we obtain that $(-\Delta)^s=-\Delta$  is the well known Laplacian operator and \eqref{fNLS1} reduces to the cubic nonlinear Schr\"odinger equation (NLS) in the focusing case. As far as we know, there exist many applications for this specific equation such as  optics, quantum mechanics, Bose-Einstein condensates, laser beam propagation and DNA modelling. In mathematical point of view, the NLS equation describes nonlinear waves and dispersive wave phenomena (\cite{Boyd}, \cite{Cazenave}, \cite{Fibich} and \cite{SulemSulem}). In addition, there are many qualitative aspects concerning this equation and one of them is the orbital stability of standing/traveling solitary waves in one or higher dimensions. 
We refer the reader to \cite{CazenaveLions}, \cite{GrillakisShatahStraussI}, \cite{GrillakisShatahStraussII}, \cite{MaAblowitz}, \cite{Rowlands}, and \cite{WeinsteinNLS} for detailed discussion.\\
\indent A standing periodic wave solution for the equation \eqref{fNLS1} has the form
\begin{equation}\label{standingwave}
u(x,t)=e^{i\omega t}\varphi(x),
\end{equation}
where $\varphi: \mathbb{T} \longrightarrow \mathbb{R}$ is a smooth $2\pi$-periodic function and $\omega \in \mathbb{R}$ represents the wave frequency which is assumed to be positive. Substituting \eqref{standingwave} into \eqref{fNLS1}, we obtain the following differential equation with fractional derivative
\begin{equation}\label{EDO1}
(-\Delta)^s\varphi+\omega \varphi-\varphi^3=0.
\end{equation}

For $\omega>0$, let us consider the standard Lyapunov functional defined as
\begin{equation}\label{G}
G(u):=E(u)+\omega F(u).
\end{equation}
By \eqref{EDO1}, we obtain $G'(\varphi,0)=0$, that is, $(\varphi,0)$ is a critical point of $G$. In addition, the linearized operator around the pair $(\varphi,0)$ is given by
\begin{equation}\label{matrixop}
 \mathcal{L}:= G''(\varphi,0)=\begin{pmatrix}
\mathcal{L}_1 & 0 \\
0 & \mathcal{L}_2
\end{pmatrix},
\end{equation}
where
\begin{equation}\label{L1L2}
\mathcal{L}_1=(-\Delta)^s+\omega-3\varphi^2
\qquad \text{and} \qquad
\mathcal{L}_2=(-\Delta)^s+\omega-\varphi^2.
\end{equation}
Both operators $\mathcal{L}_1$ and $\mathcal{L}_2$ are self-adjoint  and they are deﬁned in $L^2_{per}$ with dense domain $H^{2s}_{per}$. Operator $\mathcal{L}$ in $(\ref{matrixop})$ plays an important role in our study.

\indent For the case $s=1$, we have the pioneer work of Angulo \cite{Angulo} where the author established results of orbital stability for positive and periodic standing waves with dnoidal profile. For this aim, the author combined the classical Floquet theory for the Hill operators $\mathcal{L}_1$ and $\mathcal{L}_2$ in $(\ref{L1L2})$ with the stability approaches in \cite{GrillakisShatahStraussI}  and \cite{WeinsteinNLS}. In the  interesting work of Gustafson \textit{et al.} in \cite{GustafsonLecozTsai}, the authors obtained cnoidal periodic wave solutions using a variational method to prove spectral
stability results with respect to perturbations with the same period $L$
and orbital stability results in the space constituted by anti-periodic functions with period $L/2$. Deconinck and Upsal in \cite{DeconinckUpsal} used the integrability of the NLS equation to determine orbital stability results for the dnoidal waves with respect to subharmonic perturbations in the space of continuous bounded functions. Additional references concerning orbital/spectral stability of periodic waves can be found in \cite{BottmanNivalaDeconinck}, \cite{ChenWenHuang}, \cite{GallayPelinovsky}, \cite{GallayHaragusI}, \cite{GallayHaragusII}, \cite{LeismanBronskiJohnsonMarangell} and \cite{natali-moraes-loreno-pastor}.

When $s \in (0,1)$, the orbital stability of real-valued, even and anti-periodic  standing wave solutions $\psi$ of \eqref{fNLS1} has been studied by Claassen and Johnson in \cite{ClaassenJohnson}. The authors determined the existence of real solutions via a minimization problem in the context of anti-periodic functions (denoted by $L^2_a(0,L)$) and they established that the associated linearized operator acting in $L^2_a(0,L)$ is non-degenerate. By assuming the additional assumption $\tfrac{d}{d\omega} \int_{0}^{L} \psi^2dx>0$ (the well-known Vakhitov-Kolokolov condition),  the authors are enabled to show that $\psi$ is orbitally stable with respect to anti-periodic perturbations in a suitable subspace of $H^s(0,L) \cap L^2_a(0,L) $.

Hakkaev and Stefanov in \cite{HakkaevStefanov} have determined the existence and the orbital (spectral)  stability of positive and periodic single-lobe solutions $\phi$ for the quadratic fractional Schr\"odinger equation
\begin{equation}\label{HakkaevStefanovEq1}
	iu_t-(-\Delta)^s u + |u|u=0 ,
\end{equation}
where $s \in \left( \tfrac{1}{4}, 1 \right)$. For the existence of periodic minimizers and stability, the authors used a (real) minimization problem as
\begin{equation}\label{HakkaevStefanovMininimization}
	\inf \left\{ \mathscr{E}(v):= \frac{1}{2} \int_{-1}^{1} ((-\Delta)^{ \frac{s}{2}}v )^2 \; dx- \frac{1}{3}\int_{-1}^{1} v^3 \; dx \; ; \; v \in H_{per}^s([-1,1]), \: \int_{-1}^{1} v^2 \;dx =\lambda \right\},
\end{equation}
where $\lambda>0$ is given. It is important to note that if a minimization problem as in $(\ref{HakkaevStefanovMininimization})$ is solved, the spectral stability of periodic waves can be established. According to \cite{GrillakisShatahStraussI}, \cite{GrillakisShatahStraussII}, \cite{natalipastor} and \cite{WeinsteinNLS}  it is necessary to determine that: \\ i) ${\rm n}(\mathcal{L})=1$ and $\Ker(\mathcal{L})=[(\phi',0),(0,\phi)]$, where ${\rm n}(\mathcal{L})$ stands the number of negative eigenvalues of $\mathcal{L}$, \\ ii) $\frac{d}{d\omega}\int_{-1}^{1}\phi^2dx>0$, \\ for the orbital stability. The first condition has been proved by the authors using that the solution $\phi$ which solves the minimizing problem $(\ref{HakkaevStefanovEq1})$ is positive (since it satisfies the equation \linebreak $\phi^2=((-\Delta)^s+\omega)\phi$, where $\omega>0$) and an oscillation theorem which is determined in \cite{ClaassenJohnson}.

Our aim in this work is to show that the standing wave solution in \eqref{standingwave}, where $\varphi=\varphi_\omega$ is a positive and single-lobe periodic wave (see Definition $\ref{single-lobe}$), is orbitally stable/unstable. According to the  sufficient conditions for the orbital stability in the energy space $H_{per}^s$  in \cite{GrillakisShatahStraussI}, we need to analyse the local and global well-posedness of the associated Cauchy problem for the fNLS equation \eqref{fNLS1}. For this important topic, we first refer to the study \cite{boling} by Boling, Yongqian and Jie. They have used Galerkin's method to give the global well-posedness results for the $n$-dimensional Cauchy problem
\begin{equation}\label{CauchyProblem1}
	\begin{cases}
		iu_t+(-\Delta)^{s} u+ \beta |u|^\rho u =0,\\
		u(x,0)=u_0(x).
	\end{cases}
\end{equation}
For $s>\tfrac{n}{2}$, global solutions in $H_{per}^s(\mathbb{T}^n)$ were established when $\beta>0$ and $\rho>0$. If $0<s< \tfrac{n}{2}$, it is necessary to assume $\rho \in \big(0, \tfrac{4s}{n-2s} \big)$ to obtain the same result. For the case  $\beta<0$, the condition for the existence of global solutions is $\rho \in \left(0, \tfrac{4s}{n}\right)$. %Then, there exists a unique weak solution $u = u(x,t)$ satisfying \eqref{CauchyProblem1} such that
%More precisely, they presented sufficient conditions for the existence and uniqueness of global solutions in time depending on the sign of $\beta$. More precisely, let $\alpha>\frac{n}{2}$ be fixed. For an initial data $u_0 \in H^{4\alpha}(\mathbb{T}^n)$, there exists a unique weak solution $u = u(x,t)$ satisfying \eqref{CauchyProblem1} such that
%$$u \in L^\infty(0,T; H_{per}^\alpha(\mathbb{T}^n) \cap L_{per}^{\rho+2}(\mathbb{T}^n)), \, \, u_t \in L^\infty(0,T; H_{per}^{-\alpha}(\mathbb{T}^n)).$$
%for an even $\rho>0$ and $\beta>0$. When $\rho>0$ is still even and $\beta<0$, the same result occurs if $0<\rho<\frac{4\alpha}{n}$.
%Now, if $\rho>0$ is not even, the same result occurs when $\beta>0$ and $\rho>2[\alpha]+1$. If
%	$\beta<0$, it is necessary to consider $2[\alpha]+1<\rho<\frac{4\alpha}{n}$.
Demirbas, Erdo\u gan and Tzirakis in \cite{DemirbasErdoganTzirakis} have studied the existence and uniqueness for the Cauchy problem \eqref{CauchyProblem1} for the case $n=1$, $\rho = 2$ and $\beta=1$. Using Gagliardo-Nirenberg inequality and the tools of Bourgain spaces and Strichartz estimates, the authors determined the existence of local solutions in $H^{\alpha}_{per}(\mathbb{T})$  for $\alpha> \tfrac{1-s}{2}$ and global solutions for $\alpha> \tfrac{10 s+1}{12}$.  Cho, Hwang, Kwon and Lee in \cite{ChoHwangKwonLee} used Bourgain spaces to establish local solutions in $H_{per}^{\alpha}(\mathbb{T})$ for $\alpha\geq \frac{1-s}{2}$. A refined result concerning the local well-posedness for the case $\beta=-1$ is given in \cite{Thirouin}.

%Moreover, Thirouin in \cite{Thirouin} studied the Cauchy problem
%\begin{equation}\label{CauchyProblem2}
%	\begin{cases}
%		iu_t-(-\Delta)^{\frac{\alpha}{2}} u -  |u|^2 u =0,\\
%		u(x,0)=u_0(x)
%	\end{cases}
%\end{equation}
%where $\alpha \in \left(\tfrac{2}{3},1\right)$. Also by using Bourgain spaces and Strichartz estimate, the author proved that \eqref{CauchyProblem2} is locally well-posed in $H^s_{per} (\mathbb{T})$ when $s> \gamma$, where   $\gamma >0$ is such that $\gamma > \tfrac{1}{2}-\tfrac{\alpha}{4}$. As consequence, we see that \eqref{CauchyProblem2} is locally well-posed in $H^s_{per} (\mathbb{T})$, with $s:=\tfrac{\alpha}{2}$ and so $s \in \left( \tfrac{1}{3}, \tfrac{1}{2} \right)$.

Most of the authors (some of them mentioned above) apply the  Gagliardo-Nirenberg inequality for the periodic case in order to show the existence of global solutions. Nevertheless, they use the well known version posed in unbounded domains, namely,
\begin{equation}\label{GNreal}
	\|f\|_{{L^4}}^4   \leq    C  \|(-\Delta)^{\tfrac{s}{2}}f\|_{L^2}^{\tfrac{1}{s}} \, \, \|f\|_{{L^2}}^{4-\tfrac{1}{s}},
\end{equation}
where $f \in H^s$ and $C>0$ is a constant not depending on $f$.  To the  best of our knowledge, an additional term containing the $L^2$-norm needs to be added to $(\ref{GNreal})$ since it is  deduced  from the well-known inequality posed in bounded domains  (see \cite{Nirenberg}). It is important to note that the additional term containing the $L^2$-norm does not intervene in the analysis of existence of global solutions for the Cauchy problem associated to equation $(\ref{fNLS1})$ since the $L^2$-norm is a conserved quantity. Besides the orbital stability/instability results, our intention is to present a precise statement concerning the Gagliardo-Nirenberg inequality in the periodic context given by
\begin{equation}\label{GNper}
	\|f\|_{{L^4_{per}(\mathbb{T})}}^4   \leq    C  \|(-\Delta)^{\tfrac{s}{2}}f\|_{L^2_{per}(\mathbb{T})}^{\tfrac{1}{s}} \, \, \|f\|_{{L^2_{per}(\mathbb{T})}}^{4-\tfrac{1}{s}} + C \|f\|_{{L^2_{per}(\mathbb{T})}}^{4}.
\end{equation}

We now give the main points of our paper:  First, we show the existence of an even periodic single-lobe solution $\varphi$ for the equation \eqref{EDO1}.  Let $\tau>0$ be fixed. Following similar arguments as in \cite{NataliLePelinovsky} and \cite{NataliLePelinovskyKDV}, we need to solve the following constrained minimization problem
\begin{equation}\label{minimization1}
\inf\left\{\mathcal{B}_\omega(u):= \int_{-\pi}^{\pi}|(-\Delta)^{\frac{s}{2}} u|^2+\omega|u|^2\; dx \; ; \; u \in H^s_{per} ,\:  \int_{-\pi}^{\pi} |u|^4 \; dx=\tau \right\},
\end{equation}
where $\omega>0$ and $s \in \left(\tfrac{1}{4},1\right]$. Different from the approaches \cite{HakkaevStefanov}, \cite{NataliLePelinovsky} and \cite{NataliLePelinovskyKDV}, wee see that $u$ in $(\ref{minimization1})$ is complex, so that the eventual solution $\Phi$ for the mentioned problem is a complex-valued function. However, since its complex conjugate $\overline{\Phi}$ also solves the minimization problem $(\ref{minimization1})$, we can assume $\Phi=\overline{\Phi}$ in order to obtain a real valued solution $\varphi$ which is a real even single-lobe solution for the minimization problem $(\ref{minimization1})$ for all $\omega>\frac{1}{2}$.\\
\indent Another way to construct periodic real valued solutions for the equation \eqref{EDO1} can be determined by using the local and global bifurcation theory in \cite{buffoni-toland}. First, we construct small amplitude periodic solutions in the same way as in \cite{NataliLePelinovsky} (see also \cite{bruell-dhara})
 for $\omega>\frac{1}{2}$ and close to the bifurcation point $\frac{1}{2}$. After that, we give sufficient conditions to extend parameter $\omega$ to the whole interval $(\tfrac{1}{2},+\infty)$ by constructing an even periodic continuous function $\omega \in \left( \tfrac{1}{2}, +\infty \right) \longmapsto \varphi_{\omega} \in H_{per,e}^{2s}$
where $\varphi_{\omega}$ solves equation $(\ref{EDO1})$. However, since the periodic wave obtained by the global bifurcation theory can not have a single-lobe profile, we choose the periodic waves which arise as a minimum of the problem $(\ref{minimization1})$. The existence of small amplitude waves associated to the Schr\"odinger equation were determined in \cite{GallayHaragusI} for the equation $(\ref{CauchyProblem1})$ with $s=1$ and $\beta=\pm1$. First they show that these waves are orbitally
stable within the class of solutions which have the same period. For the case of general bounded perturbations, they prove that the small amplitude travelling waves are stable in the defocusing case and unstable in the focusing case.

The fact that the minimizer $\varphi$ of \eqref{minimization1} is a real even single-lobe solution for $(\ref{EDO1})$ gives us useful spectral properties which in turn play an important role regarding our stability approach. Using the fact that $\varphi$ minimizes the constrained problem in $(\ref{minimization1})$, we see that $\text{n}(\mathcal{L})=1$. Since $\mathcal{L}$ in $(\ref{matrixop})$ is a diagonal operator, it is possible to obtain by the fact \mbox{$(\mathcal{L}_1\varphi,\varphi)_{L_{per}^2}=-2\int_{-\pi}^{\pi}\varphi^4dx<0$} that $\text{n}(\mathcal{L}_1)=1$ and $\text{n}(\mathcal{L}_2)=0$ (see Section 2 for the precise notations of $\text{n}(\mathcal{L}_i)$, $i=1,2$). This means by the fact $\mathcal{L}_2\varphi=0$ that $0$ is the first eigenvalue for $\mathcal{L}_2$. A simple application of the standard Krein-Ruttman Theorem gives us $\varphi>0$, so that the solution is positive. Next, by using some facts concerning the theory of positive operators as in \cite{Albert},  the oscillation theorem in \cite{HurJohnsonMartin} and  $\varphi$ is a positive even single-lobe we obtain  $\text{z}(\mathcal{L}_1)=1$, that is, $\Ker(\mathcal{L}_1)=[\varphi']$. Here, the positivity of the single-lobe profile plays an important role in our spectral analysis since it avoids the additional assumption $1 \in {\rm R}(\mathcal{L}_1)$ as required in \cite{HurJohnson}, \cite{NataliLePelinovsky} and \cite{NataliLePelinovskyKDV} to obtain that $\text{z}(\mathcal{L}_1)=1$. All facts concerning the spectral analysis for the operators $\mathcal{L}_1$ and $\mathcal{L}_2$ in $(\ref{L1L2})$ enable us to conclude, since $\mathcal{L}$ in \eqref{matrixop} is a diagonal operator, that ${\rm n}(\mathcal{L})=1$ and ${\rm z}(\mathcal{L})=2$. In particular, as ${\rm z}(\mathcal{L}_1)=1$, we can use the implicit function theorem to construct a smooth curve
\begin{equation}\label{curveinto}
\omega \in  \left(\tfrac{1}{2}, +\infty\right) \longmapsto \varphi \in H^{s}_{per}
\end{equation}
of even and positive periodic waves with fixed period which solves \eqref{EDO1}.

The strategy to prove the orbital stability is based on an adaptation of the arguments in \cite{GrillakisShatahStraussI} and \cite{natalipastor} to the periodic setting.  Notice that $\text{n}(\mathcal{L})=1$ and $\text{z}(\mathcal{L})=2$ are useful to consider the standing wave solution in $(\ref{standingwave})$ containing only one symmetry (rotation), but the orbital stability can be considered with the orbit generated by the wave $\varphi$ containing two symmetries (namely, rotation and translation). To do so, we need to employ the stability result in \cite{natalipastor} and the existence of global solutions in time are  cornerstones for our analysis. Since we can obtain a global well-posedness result for the case $s\in(\tfrac{1}{2},1)$ according to the inequality $(\ref{GNper})$, the orbital stability of the wave can be established provided that  $\textsf{q}:=\frac{d}{d\omega}\int_{-\pi}^{\pi} \varphi^2 dx>0$. The stability result in \cite{GrillakisShatahStraussI}  can be also used for the orbital stability and yields $\textsf{q}>0$. However, we need to consider only one basic symmetry for the orbit and since we consider standing waves of the form $(\ref{standingwave})$, it is natural to consider the orbit generated by the wave constituted only by rotations. In the latter case, the energy space is the periodic Sobolev space $H_{per}^s$ restricted to the even functions, namely, $H_{per,e}^s$, instead of the usual energy space $H_{per}^s$. Restricted to this new space $L_{per,e}^2$, we have $\text{n}(\mathcal{L})=\text{z}(\mathcal{L})=1$ and this fact agrees well with the spectral (sufficient) conditions for the orbital stability in \cite{GrillakisShatahStraussI}.\\
 \indent Concerning the orbital instability, we can apply the \textit{instability theorem} in \cite{GrillakisShatahStraussI} and the fact that $\text{n}(\mathcal{L})=\text{z}(\mathcal{L})=1$ over the space $L_{per,e}^2$. Note that the orbital instability in the space $H_{per,e}^s$ will be considered in the orbit generated again by a single symmetry. Even though we are considering a smaller subspace, the orbital instability can be considered in the whole energy space $H_{per}^s$ and the orbit generated by the two symmetries. To do so, the only requirement is that $\textsf{q}<0$.\\
 %Collecting all informations above, we can state of our main result:
The above results yield the main theorem:

\begin{theorem}\label{mainth} Let $\varphi = \varphi_\omega$ be the positive and periodic single-lobe solution for the equation $(\ref{EDO1})$ obtained in Theorem $\ref{theorem1even}$, for all $\omega \in (\frac{1}{2}, +\infty)$. If $\mathsf{q}>0$, the periodic wave is orbitally stable. If $\mathsf{q}<0$, the periodic wave is orbitally unstable.
\end{theorem}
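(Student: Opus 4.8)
The plan is to cast the problem in the abstract Hamiltonian framework of \cite{GrillakisShatahStraussI}, adapted to the periodic setting as in \cite{natalipastor}, and to reduce the entire dichotomy to the sign of the scalar $d''(\omega)$, where $d(\omega):=G(\varphi_\omega,0)=E(\varphi_\omega)+\omega F(\varphi_\omega)$ is the Lyapunov function evaluated along the wave. Since we treat standing waves of the form $(\ref{standingwave})$, I would work in the even energy space $H^s_{per,e}$ and retain only the rotational symmetry $u\mapsto e^{i\theta}u$, whose infinitesimal generator produces the kernel direction $(0,\varphi)$ and whose associated conserved quantity is $F$.

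First I would verify the structural hypotheses demanded by the abstract theorem. Global well-posedness in $H^s_{per}$ for $s\in(\tfrac12,1)$ follows from the conservation of $E$ and $F$ together with the periodic Gagliardo--Nirenberg inequality $(\ref{GNper})$, while $(\varphi,0)$ is a critical point of $G$ by $(\ref{EDO1})$. The essential spectral input is already established: restricted to $L^2_{per,e}$ the diagonal operator $\mathcal{L}$ satisfies $\mathrm{n}(\mathcal{L})=1$ and $\mathrm{z}(\mathcal{L})=1$, the one-dimensional kernel being exactly $[(0,\varphi)]$. The odd direction $\varphi'$ which enlarges the kernel to dimension two in the full space $H^s_{per}$ is removed by the evenness constraint. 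Because the torus is compact, each $\mathcal{L}_i$ has compact resolvent, hence discrete spectrum, so the remainder of the spectrum is strictly positive and bounded away from zero, supplying the spectral gap that the abstract machinery requires.

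Next I would compute $d'(\omega)$ along the smooth curve $(\ref{curveinto})$, whose existence is guaranteed by the implicit function theorem since $\mathrm{z}(\mathcal{L}_1)=1$. Differentiating $d(\omega)=E(\varphi_\omega)+\omega F(\varphi_\omega)$ and using $G'(\varphi_\omega,0)=0$, the term carrying $\partial_\omega\varphi_\omega$ cancels and one is left with $d'(\omega)=F(\varphi_\omega)=\tfrac12\int_{-\pi}^{\pi}\varphi_\omega^2\,dx$. Consequently $d''(\omega)=\tfrac12\,\mathsf{q}$, so the sign of $d''$ coincides with that of $\mathsf{q}$.

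Finally I would invoke the abstract stability/instability criterion. Writing $p(d'')$ for the number of positive eigenvalues of the (scalar) matrix $d''$ and recalling $\mathrm{n}(\mathcal{L})=1$ in the even space: if $\mathsf{q}>0$ then $d''>0$, so $p(d'')=1=\mathrm{n}(\mathcal{L})$ and the stability theorem of \cite{GrillakisShatahStraussI} yields orbital stability; if $\mathsf{q}<0$ then $d''<0$, so $p(d'')=0$, the difference $\mathrm{n}(\mathcal{L})-p(d'')=1$ is odd, and the instability theorem of \cite{GrillakisShatahStraussI} yields orbital instability. I expect the main obstacle to be not any single computation but the careful transcription of the Grillakis--Shatah--Strauss machinery to the periodic, fractional, even-constrained setting---in particular confirming that the coercivity and spectral hypotheses genuinely hold in $L^2_{per,e}$ and that the abstract argument tolerates the nonlocal operator $(-\Delta)^s$ and the restriction to even functions.
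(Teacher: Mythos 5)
Your reduction of the theorem to the sign of $d''(\omega)$ is sound, and your computation $d'(\omega)=F(\varphi_\omega)$, $d''(\omega)=\tfrac12\mathsf{q}$ matches the paper's Vakhitov--Kolokolov formulation (there it appears as $(\mathcal{L}_1\Psi,\Psi)_{L^2_{per}}<0$ with $\Psi=-\tfrac{d}{d\omega}\varphi$ and $\mathcal{L}_1\Psi=\varphi$). Your instability half is also essentially the paper's: the paper applies the instability theorem of \cite{GrillakisShatahStraussI} in $H^s_{per,e}$, where ${\rm n}(\mathcal{L})={\rm z}(\mathcal{L})=1$ and ${\rm Ker}(\mathcal{L}|_{L^2_{per,e}})=[(0,\varphi)]$, exactly as you propose; note, however, that you still owe the final step the paper makes explicit, namely that instability in the subspace $H^s_{per,e}$ with respect to the rotation-only orbit yields instability in the sense of Definition \ref{defstab}, i.e. in the whole space $H^s_{per}$ with respect to the two-parameter orbit $\mathcal{O}_\Phi$.

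The genuine gap is in the stability half. You run the \cite{GrillakisShatahStraussI} machinery entirely inside $H^s_{per,e}$ with the rotation symmetry alone; what that delivers is stability against \emph{even} perturbations, modulo rotations only. But the theorem's notion of stability is the one in Definition \ref{defstab}: arbitrary initial data $u_0\in H^s_{per}$ close to $\Phi$, with distance measured to the orbit generated by \emph{both} rotations and translations. A general perturbation of an even wave is not even, and evenness cannot be imposed a posteriori (it is preserved by the flow, but nearby data need not have it), so nothing in your argument controls non-even perturbations. Moreover, in the full space the kernel of $\mathcal{L}$ is two-dimensional, ${\rm Ker}(\mathcal{L})=[(\varphi',0),(0,\varphi)]$ by Proposition \ref{propspec}, so the one-symmetry Grillakis--Shatah--Strauss theorem simply does not apply there. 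The paper closes precisely this gap by a different device: for $\mathsf{q}>0$ it invokes \cite[Theorem 4.17]{natalipastor}, a stability theorem built for the two-symmetry orbit with ${\rm n}(\mathcal{L})=1$ and ${\rm z}(\mathcal{L})=2$, combined with the global well-posedness of Proposition \ref{gwp} (this is where the restriction $s\in\left(\tfrac12,1\right]$ and the periodic Gagliardo--Nirenberg inequality enter). Your route buys a one-dimensional kernel and a cleaner application of the abstract theory, but at the price of a strictly weaker stability conclusion than the one claimed.
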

 
 \indent To obtain the sign of the quantity $\textsf{q}$ we use a numerical approach.  For this aim, we first use the Petviashvili’s iteration method to generate the periodic  standing wave solutions of the fNLS equation. Then, we use the forward difference method for the numerical differentiation with respect to $\omega$ after performing the numerical integration.

Our paper is organized as follows: In Section \ref{section2} we present some basic notations. In Section \ref{wellposedness}, we show the Gagliardo-Nirenberg inequality for fractional operators in the periodic context. The existence of even periodic minimizers with a single-lobe profile as well as the existence of small amplitude periodic waves are determined in Section \ref{evensolutions}. In Section \ref{spectralanalysis-section}, we present spectral properties for the linearized operator related to the fNLS equation and some results concerning the uniqueness of minimizers. Finally, our result about orbital stability and instability associated to periodic waves is  shown in Section \ref{stability-section}.

\section{Notation}\label{section2}

\indent For $s\geq0$, the real/complex Sobolev space
$H^s_{per}:=H^s_{per}(\mathbb{T})$
consists of all periodic distributions $f$ such that
\begin{equation}\label{norm1}
\|f\|^2_{H^s_{per}}:= 2\pi \sum_{k=-\infty}^{\infty}(1+k^2)^s|\hat{f}(k)|^2 <\infty,
\end{equation}
where $\hat{f}$ is the periodic Fourier transform of $f$ and $\mathbb{T}=[-\pi,\pi]$. The space $H^s_{per}$ is a  Hilbert space with the inner product denoted by $(\cdot, \cdot)_{H_{per}^s}$. When $s=0$, the space $H^s_{per}$ is isometrically isomorphic to the space $L^2_{per}:=H^0_{per}$ (see, e.g., \cite{IorioIorio}). The norm and inner product in $L^2_{per}$ will be denoted by $\|\cdot \|_{L_{per}^2}$ and $(\cdot, \cdot)_{L_{per}^2}$, respectively. We omit the interval $[-\pi, \pi]$ of the space $H^s_{per}(\mathbb{T})$ and we denote it by $H^s_{per}$ shortly. In addition, the norm  in \eqref{norm1} can be written as (see \cite{Ambrosio})
\begin{equation}\label{norm}
	\|f\|_{ H^{s}_{per}}^2=\|(-\Delta)^{\tfrac{s}{2}}f\|_{L^2_{per}}^2+\|f\|_{L^2_{per}}^2.
\end{equation}

For $s\geq0$, the space
$
H^s_{per,e}:=\{ f \in H^s_{per} \; ; \; f \:\; \text{is an even function}\} 
$
is endowed with the same norm and inner product in $H^s_{per}$. If it is needed, the  above notations  can  be extended in the complex/vectorial case in the following sense:  $f\in {H}_{per}^s\times H_{per}^s$ we have $f=f_1+if_2\equiv (f_1,f_2)$, where $f_i\in H_{per}^s$  $(i=1,2)$  since $\mathbb{C}$ is identified with $\mathbb{R}^2$.

%The symbols $\sn(\cdot, k), \dn(\cdot, k)$ and $\cn(\cdot, k)$ represent the Jacobi elliptic functions of \textit{snoidal}, \textit{dnoidal}, and \textit{cnoidal} type, respectively. For $k \in (0, 1)$, ${\rm F}(\phi, k)$ and $\E(\phi, k)$  denote the complete elliptic integrals of the first and second kind, respectively, and  we denote by $\K(k)={\rm F}\left(\frac{\pi}{2},k\right)$ and $\E(k)=\E\left(\frac{\pi}{2},k\right)$, (see \cite{ByrdFriedman}).

We denote the number of negative eigenvalues and the dimension of the kernel of a certain linear operator $\mathcal{A}$, by $\text{n}(\mathcal{A})$ and $\text{z}(\mathcal{A})$, respectively.

\section{Gagliardo-Nirenberg inequality in the fractional periodic context}\label{wellposedness}

In this section, we show the Gagliardo-Nirenberg inequality for fractional operators in the periodic case. Our intention is to give a precise result of global well-posedness associated to the following Cauchy problem
\begin{equation}\label{OurCauchyProblem1}
\begin{cases}
iu_t-(-\Delta)^{s} u+u|u|^{2}=0, \\
u(x,0)=u_0(x).
\end{cases}
\end{equation}
\indent For this aim,  we need  the Gagliardo-Nirenberg inequality for bounded domains of cone-type $\Omega \subset \mathbb{R}^n$, $n \in \mathbb{N}$ (for details of this kind of domains, see \cite[Section 4.2.3, Equation 7]{Triebel1978}) stated in the next lemma. In the rest of this section,  we consider the fractional Sobolev space $H^r_{q}(\Omega)=W^{r,q}(\Omega)$, well known as \textit{Slobodeckij space} (for details, see \cite[Section 1.2]{bisci-radulescu-servadei}, \cite[Section 2.3.3, Equation 1]{Triebel1978} and \cite[Section 4.2.1, Definition 1]{Triebel1978}) for each $r\in [0,1)$ and $q \geq 1$. In what follows, we  handle with real-valued functions. For complex-valued functions, the arguments are similar.

\begin{lemma}[Gagliardo-Nirenberg inequality for bounded domains of cone-type]\label{LemmaGPN}
Let  $\Omega \subset \mathbb{R}^n$ be a bounded domain of cone-type. If $k,s \in (0,1)$, $p,p_0>1$ and $r>0$ satisfy 
$$
r=k s \quad \text{and} \quad \frac{1}{p}=\frac{1-k}{p_0}+\frac{k}{2},
$$
then there exists $C_1>0$ such that,
\begin{equation}\label{T.00}
\|f\|_{H^r_p(\Omega)} \leq C_1 \|f\|^{1-k}_{L^{p_0}(\Omega)} \, \|f\|^{k}_{H^s(\Omega)},
\end{equation}
for all $ f \in L^{p_0}(\Omega) \cap H^{s}(\Omega)$.
\end{lemma}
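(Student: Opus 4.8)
The plan is to read the two stated relations $r=ks$ and $\frac1p=\frac{1-k}{p_0}+\frac k2$ as interpolation relations, with interpolation parameter $\theta=k$, between the endpoint spaces $X_0:=L^{p_0}(\Omega)=H^0_{p_0}(\Omega)$ and $X_1:=H^s(\Omega)=H^s_2(\Omega)$. Writing $s_0=0,\ p_0$ for the first endpoint and $s_1=s,\ p_1=2$ for the second, one has $(1-k)s_0+ks_1=ks=r$ and $\frac{1-k}{p_0}+\frac{k}{p_1}=\frac1p$, so that the pair $(r,p)$ sits exactly on the interpolation line. Thus the lemma will follow once we (a) identify $H^r_p(\Omega)$, up to equivalence of norms, with the interpolation space of $X_0$ and $X_1$ at level $k$, and (b) apply the fundamental multiplicative inequality for that interpolation functor.

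For step (a) I would first pass from the bounded domain $\Omega$ to the whole space $\mathbb{R}^n$. Since $\Omega$ is of cone-type, the cited construction of Triebel provides a common bounded linear extension operator $E\colon H^\sigma_q(\Omega)\to H^\sigma_q(\mathbb{R}^n)$ together with the restriction $R\colon H^\sigma_q(\mathbb{R}^n)\to H^\sigma_q(\Omega)$ satisfying $RE=\mathrm{id}$, acting simultaneously on the whole relevant scale. Because interpolation is a functor, this retraction/coretraction pair transfers the interpolation identity from $\mathbb{R}^n$ to $\Omega$. On $\mathbb{R}^n$ the identity $[H^{0}_{p_0}(\mathbb{R}^n),H^{s}_{2}(\mathbb{R}^n)]_{k}=H^{r}_{p}(\mathbb{R}^n)$, with $r$ and $p$ as above, is exactly Triebel's complex interpolation theorem for the fractional Sobolev scale; the same conclusion then holds on $\Omega$ after applying $E$ and $R$.

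Step (b) is immediate: for the complex method one has $\|f\|_{[X_0,X_1]_k}\le \|f\|_{X_0}^{1-k}\,\|f\|_{X_1}^{k}$ for every $f\in X_0\cap X_1$, and combining this with the norm equivalence furnished by (a) yields
$$\|f\|_{H^r_p(\Omega)}\le C_1\,\|f\|_{L^{p_0}(\Omega)}^{1-k}\,\|f\|_{H^s(\Omega)}^{k},$$
where $C_1$ only absorbs the equivalence constants from the identification of $H^r_p(\Omega)$ with $[X_0,X_1]_k$ and depends on $\Omega,n,k,s,p_0$ but not on $f$. This is precisely $(\ref{T.00})$.

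The delicate point, and the step I would treat most carefully, is the space identification in (a) on the bounded domain. The cone-type hypothesis is exactly what guarantees the bounded extension operator, without which the interpolation identity on $\Omega$ is not automatic; this is why the assumption cannot be dropped. A second subtlety is the precise meaning of $H^r_p$ when $p\ne2$: the Slobodeckij (Gagliardo) space coincides with the Bessel-potential space produced by complex interpolation only when $p=2$, so one must either verify that, within the range of parameters dictated by $r=ks$ and the relation for $p$, the two scales agree, or else replace the complex functor by the real functor $(\cdot,\cdot)_{k,p}$ tuned so that its output reproduces the Slobodeckij norm. Since one endpoint is an $L^{p_0}$ space and the target smoothness $r=ks$ is genuinely fractional, pinning down that the chosen functor returns the Slobodeckij space is where the argument must be secured; once this is done, the multiplicative inequality and the passage through the extension operator are routine.
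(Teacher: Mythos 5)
Your proposal is correct and follows essentially the same route as the paper: the paper likewise obtains the result by citing Triebel's interpolation identity $(H^{s_0}_{p_0}(\Omega),H^{s}_{p_1}(\Omega))_k=H^r_p(\Omega)$ for cone-type domains (Triebel, Section 4.3.1, Theorem 2) together with the multiplicative interpolation inequality (Section 1.3.3, Equation 5), and then specializes $s_0=0$, $p_1=2$. The only difference is that you unpack the proof of the domain-level interpolation theorem via the extension/restriction (retraction) argument, whereas the paper cites that theorem directly; your cautionary remarks about the Slobodeckij versus Bessel-potential identification concern a point the paper glosses over by notational convention.
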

\begin{proof}
First of all,  according to \cite[Section 4.3.1, Theorem 2]{Triebel1978} the relation of interpolation
$$
(H^{s_0}_{p_0}(\Omega), H^{s}_{p_1}(\Omega))_{k}=H^r_p(\Omega),
$$
is valid. Here, $p,p_1,p_0>1$, $k \in (0,1)$, $s_0,s \geq 0$, and $r>0$ satisfy
$$
r=s_0 (1-k)+ k s \quad \text{and} \quad \frac{1}{p}=\frac{1-k}{p_0}+\frac{k}{p_1}.
$$

As a consequence of \cite[Section 1.3.3, Equation 5]{Triebel1978} there exists a constant a ${C_0}>0$ such that
\begin{equation}\label{T.01}
\|f\|_{H^{r}_{p}(\Omega)} \leq {C_0}  \|f\|_{H^{s_0}_{p_0}(\Omega)}^{1-k}\|f\|_{H^{s}_{p_1}(\Omega)}^{k},
\end{equation}
for all $f \in H^{s_0}_{p_0}(\Omega) \cap H^{s}_{p_1}(\Omega).$
In particular, by considering $p_1=2$, $s_0=0$ and $s \in (0,1)$, we see that
$$
r= k s \in (0,1), \qquad  \frac{1}{p}=\frac{1-k}{p_0}+\frac{k}{2}.
$$
\indent Thus, by \eqref{T.01} we obtain
\begin{equation*}%\label{T.02}
\|f\|_{H^r_p(\Omega)} \leq {C_1} \|f\|^{1-k}_{L^{p_0}(\Omega)} \, \|f\|^{k}_{H^s(\Omega)},
\end{equation*}
for some constant $C_1 > 0$ and for all $f \in L^{p_0}(\Omega) \cap H^{s}(\Omega)$.
%From this and since the embedding $H^r_p(\Omega) \hookrightarrow L^p(\Omega)$ holds, it follows that
%$$
%\|f\|_{L^p(\Omega)} \leq \tilde{C}_2 \|f\|^{1-k}_{L^{p_0}(\Omega)} \cdot \|f\|^{k}_{H^s(\Omega)},\; \; f \in L^{p_0}%(\Omega) \cap H^{s}(\Omega)
%$$
%for some $\tilde{C}_2>0$.

\end{proof}

\begin{corollary}
Let  $\Omega \subset \mathbb{R}^n$ be a bounded domain of cone-type. If $k,s \in (0,1)$ and $p,p_0>1$ satisfy
\begin{equation}\label{condpq}
\frac{1}{p}=\frac{1-k}{p_0}+\frac{k}{2},
\end{equation}
there exists $C_2>0$ such that,
\begin{equation*}
\|f\|_{L^p(\Omega)} \leq C_2 \|f\|^{1-k}_{L^{p_0}(\Omega)} \, \|f\|^{k}_{H^s(\Omega)},
\end{equation*}
for all $ f \in L^{p_0}(\Omega) \cap H^{s}(\Omega)$.
\end{corollary}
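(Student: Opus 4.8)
The plan is to deduce this corollary directly from Lemma \ref{LemmaGPN} by discarding part of the left-hand side. The key observation is that the Slobodeckij norm $\|\cdot\|_{H^r_p(\Omega)}$ dominates the Lebesgue norm $\|\cdot\|_{L^p(\Omega)}$ for the same integrability exponent $p$, since $H^r_p(\Omega) = W^{r,p}(\Omega) \hookrightarrow L^p(\Omega)$ continuously for every $r \in (0,1)$ and $q = p > 1$. Indeed, from the very definition of the Slobodeckij norm one has
\begin{equation*}
\|f\|_{L^p(\Omega)} \leq \|f\|_{H^r_p(\Omega)},
\end{equation*}
because the $H^r_p$-norm is (equivalent to) the sum of the $L^p$-norm and the Gagliardo seminorm, the latter being nonnegative.

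First I would take $k, s \in (0,1)$ and $p, p_0 > 1$ satisfying the constraint $(\ref{condpq})$, and set $r := k s$. With this choice, the hypotheses of Lemma \ref{LemmaGPN} are met verbatim: the exponent relations $r = k s$ and $\tfrac{1}{p} = \tfrac{1-k}{p_0} + \tfrac{k}{2}$ hold, and $r > 0$. Lemma \ref{LemmaGPN} then furnishes a constant $C_1 > 0$ with
\begin{equation*}
\|f\|_{H^r_p(\Omega)} \leq C_1 \|f\|^{1-k}_{L^{p_0}(\Omega)} \, \|f\|^{k}_{H^s(\Omega)}
\end{equation*}
for all $f \in L^{p_0}(\Omega) \cap H^{s}(\Omega)$. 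Chaining the embedding inequality above with this estimate and setting $C_2 := C_1$ yields the claimed bound.

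There is essentially no serious obstacle here; the corollary is an immediate weakening of the lemma via the continuous inclusion $H^r_p(\Omega) \hookrightarrow L^p(\Omega)$. The only point deserving a moment of care is to confirm that this embedding is indeed available on the bounded cone-type domain $\Omega$ for the Slobodeckij space with the \emph{same} exponent $p$ on both sides — which is immediate from the norm's structure and requires no Sobolev gain in integrability — and to note that the constraint $(\ref{condpq})$ is exactly what guarantees the admissible range of $r = ks \in (0,1)$ needed to invoke the lemma.
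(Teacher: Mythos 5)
Your proposal is correct and follows essentially the same route as the paper: the paper's proof likewise sets $r = ks \in (0,1)$, invokes the Sobolev embedding $H^r_p(\Omega) \hookrightarrow L^p(\Omega)$, and chains it with Lemma \ref{LemmaGPN} under condition \eqref{condpq}. Your additional remark justifying the embedding from the structure of the Slobodeckij norm is a harmless elaboration of what the paper states in one line.
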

\begin{proof}
First, it is clear that $r=k s \in (0,1)$. The Sobolev embedding $H^r_p(\Omega) \hookrightarrow L^p(\Omega)$, condition $(\ref{condpq})$, and Lemma \ref{LemmaGPN} give us
$$
\|f\|_{L^p(\Omega)} \leq C_2 \|f\|^{1-k}_{L^{p_0}(\Omega)} \, \|f\|^{k}_{H^s(\Omega)},
$$
for some constant $C_2>0$.
%that is
%$$
%p=\frac{2p_0}{k(p_0-2)+2}.
%$$
\end{proof}

As a particular case of the Lemma \ref{LemmaGPN} in the periodic context, we established the following theorem.

\begin{theorem}[$n-$dimensional periodic Gagliardo-Nirenberg inequality]\label{PGN}
Let $\mathbb{T}^n \subset \mathbb{R}^n$ be the $n$-dimensional torus. If $k,s \in (0,1)$ and $r>0$ are so that $r=k s $, then there exists ${C_3}>0$ such that
\begin{equation}\label{PGNI}
\|f\|_{H^r_{per}(\mathbb{T}^n )} \leq {C_3} \|f\|^{1-k}_{L^{2}_{per}(\mathbb{T}^n )} \, \|f\|^{k}_{H^s_{per}(\mathbb{T}^n )},
\end{equation}
for all $ f \in  H^{s}_{per}(\mathbb{T}^n)$.
\end{theorem}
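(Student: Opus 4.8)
The plan is to read Theorem \ref{PGN} as the specialization of Lemma \ref{LemmaGPN} to a fundamental domain of the torus, with the exponents $p_0=2$ and $p=2$. First I would identify $\mathbb{T}^n$ with the cube $\Omega=(-\pi,\pi)^n\subset\mathbb{R}^n$, which is a bounded Lipschitz domain and hence of cone-type, so that Lemma \ref{LemmaGPN} applies to it. Choosing $p_0=2$ in the constraint $\frac1p=\frac{1-k}{p_0}+\frac k2$ forces $p=2$, whence $H^r_p(\Omega)=H^r(\Omega)$, and Lemma \ref{LemmaGPN} yields, for every $f\in H^s(\Omega)$,
\[
\|f\|_{H^r(\Omega)}\le C_1\,\|f\|_{L^2(\Omega)}^{1-k}\,\|f\|_{H^s(\Omega)}^{k},\qquad r=ks\in(0,1).
\]

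It then remains to transfer this Slobodeckij-type estimate on $\Omega$ to the Fourier-based periodic norms in \eqref{norm1}. For the $L^2$ factor there is nothing to do, since $\|f\|_{L^2(\Omega)}=\|f\|_{L^2_{per}}$ when $\Omega$ is a period cube. For the remaining two factors I would invoke the equivalence, valid on periodic functions,
\[
\|f\|_{H^\sigma(\Omega)}\ \approx\ \|f\|_{H^\sigma_{per}(\mathbb{T}^n)},\qquad \sigma\in\{r,s\},
\]
and then chain the inequalities, absorbing all multiplicative constants into a single $C_3>0$; the exponent bookkeeping ($r=ks$, with weights $1-k$ and $k$) is immediate.

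The heart of the argument — and the step I expect to be the main obstacle — is this norm equivalence, more precisely its \emph{extension} direction $\|f\|_{H^\sigma_{per}}\lesssim\|f\|_{H^\sigma(\Omega)}$ for $\sigma=r$. The opposite (restriction) direction is easy: writing the periodic seminorm through its intrinsic Gagliardo form on the torus, the geodesic distance $d_{\mathbb{T}^n}(x,y)$ never exceeds the Euclidean distance $|x-y|$, so the kernel $d_{\mathbb{T}^n}(x,y)^{-(n+2\sigma)}$ dominates $|x-y|^{-(n+2\sigma)}$ pointwise on $\Omega\times\Omega$, giving $\|f\|_{H^\sigma(\Omega)}\lesssim\|f\|_{H^\sigma_{per}}$. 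The extension direction is subtler because the Slobodeckij seminorm over a single period ignores pairs that are close \emph{across} the boundary of $\Omega$ (geodesically close but Euclidean-far). To recover these I would exploit periodicity through a folding/covering argument: cover $\mathbb{T}^n$ by finitely many translated period cubes so that every geodesically close pair becomes Euclidean-close inside at least one of them, and use translation invariance of $f$ to identify each translated Slobodeckij seminorm with the one on $\Omega$. Summing the finitely many contributions controls the full torus seminorm by $\|f\|_{H^r(\Omega)}$, which closes the estimate.

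Finally, I would note that, since every norm appearing in \eqref{PGNI} is $L^2$-based, the theorem also admits a direct proof that bypasses $\Omega$ entirely: from $r=ks$ one has $(1+|k|^2)^{r}=\big[(1+|k|^2)^{s}\big]^{k}\big[(1+|k|^2)^{0}\big]^{1-k}$, and Hölder's inequality on the Fourier side of \eqref{norm1} with exponents $1/k$ and $1/(1-k)$ yields \eqref{PGNI} with $C_3=1$. I would keep the route through Lemma \ref{LemmaGPN} as the primary argument, to stay within the $L^p$ framework of this section, and present the Fourier computation as a confirmation of the sharp constant.
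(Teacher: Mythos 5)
Your primary route is the same as the paper's: the paper likewise treats $\mathbb{T}^n$ as a bounded cone-type domain, applies Lemma \ref{LemmaGPN} with $p_0=2$ (hence $p=2$), and then transfers to the periodic norms through the equivalence of $H^\sigma_{per}$ with the Slobodeckij norm on a period window --- except that the paper simply \emph{cites} Triebel for that equivalence, whereas you attempt to prove it, and that is where your argument has a genuine gap. Periodicity of $f$ gives you only $\|f(\cdot+v)\|_{H^\sigma(\Omega)}=\|f\|_{H^\sigma(\Omega+v)}$; it does \emph{not} identify $\|f\|_{H^\sigma(\Omega+v)}$ with $\|f\|_{H^\sigma(\Omega)}$, and single-window Slobodeckij seminorms of a periodic function are genuinely window-dependent, so the folding step ``use translation invariance of $f$ to identify each translated seminorm with the one on $\Omega$'' does not go through. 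The failure is not cosmetic: the extension inequality $\|f\|_{H^\sigma_{per}}\lesssim\|f\|_{H^\sigma(\Omega)}$ is \emph{false} for general periodic $f$ once $\sigma\ge\tfrac12$. Take (in one dimension) the sawtooth, i.e.\ the periodic extension of $f(x)=x$ on $(-\pi,\pi)$: its restriction to $\Omega=(-\pi,\pi)$ is smooth, so $\|f\|_{H^\sigma(\Omega)}<\infty$ for every $\sigma$, while $|\hat f(\xi)|\sim|\xi|^{-1}$ gives $\|f\|_{H^\sigma_{per}}=\infty$ for all $\sigma\ge\tfrac12$; moreover the same example, viewed in the window $(0,2\pi)$, has a jump in the interior and shows the window dependence directly. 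Since the theorem allows $r=ks\ge\tfrac12$ (e.g.\ $k=s=0.9$), any correct proof of the extension direction must use that $f$ already lies in $H^r_{per}$ (so that its boundary traces match across the period), for instance via a fractional Hardy-type inequality near $\partial\Omega$ or by comparing with a slightly enlarged window; this is what the Triebel results cited in the paper encode, and it is absent from your sketch.

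Fortunately, the Fourier computation you relegate to a closing remark is not merely a confirmation --- it is a complete, correct proof, and you should make it the primary one. Writing, for the $n$-dimensional analogue of \eqref{norm1},
\begin{equation*}
(1+|\xi|^2)^{ks}\,|\hat f(\xi)|^2=\Bigl[(1+|\xi|^2)^{s}\,|\hat f(\xi)|^2\Bigr]^{k}\,\Bigl[|\hat f(\xi)|^2\Bigr]^{1-k}
\end{equation*}
and applying H\"older's inequality in $\xi$ with exponents $1/k$ and $1/(1-k)$ gives $\|f\|_{H^r_{per}}\le\|f\|^{1-k}_{L^2_{per}}\|f\|^{k}_{H^s_{per}}$, i.e.\ \eqref{PGNI} with $C_3=1$, for every $r=ks$ and with no restriction at $\sigma=\tfrac12$. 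Compared with the paper's route (interpolation on cone-type domains plus the cited periodic/Slobodeckij equivalence), this argument is more elementary, self-contained, sharper in the constant, and immune to the issue above; the only thing the paper's route buys is that it stays within the $L^p$-interpolation framework of Lemma \ref{LemmaGPN}, which matters for the other results of that section but not for this $L^2$-based statement. Note also that the paper only ever applies Theorem \ref{PGN} with $r=\tfrac14$ (in Corollary \ref{PGN1}), a regime where the window equivalence is unproblematic, so the $r\ge\tfrac12$ subtlety affects the stated generality of the theorem rather than its use downstream.
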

\begin{proof}
Since the $n$-dimensional torus $\mathbb{T}^n\subset \mathbb{R}^n$ is a bounded domain of cone-type (\cite[Section 4.2.3, Remark 5]{Triebel1978}), we obtain that the Lemma \ref{LemmaGPN} is valid for $\Omega=\mathbb{T}^n$, $p_0=2$ and $r=k s \in (0,1)$. Moreover, by \cite[Section 4.6.1, Equation 2]{Triebel1978} and \cite[Section 9.1.3, Remark 1]{Triebel2010} the norms in $H^s_{per}(\Omega)$ and $H^s(\Omega)$ are equivalent and since $L^m(\mathbb{T}^n) \equiv L^m_{per}(\mathbb{T}^n)$, for all $m \geq 1$, it follows by \eqref{T.00} the following inequality
\begin{equation*}%\label{T.04}
\|f\|_{H^r_{per}(\mathbb{T}^n )} \leq C_3 \|f\|_{{L^{2}_{per}}(\mathbb{T}^n )}^{1-k} \, \|f\|_{H^{s}_{per}(\mathbb{T}^n )}^{k},
\end{equation*}
for all  $f \in  H^{s}_{per}(\mathbb{T}^n )$ and for some constant $C_3>0$.
\end{proof}

\begin{corollary}[$1$-dimensional Periodic Gagliardo-Nirenberg inequality]\label{PGN1}
	Let  $s  \in \left( \tfrac{1}{4}, 1\right)$ be fixed. There exists a constant $C_4>0$ such that,
	\begin{equation}\label{GN-s1}
\|f\|_{{L^4_{per}}}^4   \leq    C_4  \|(-\Delta)^{\tfrac{s}{2}}f\|_{L^2_{per}}^{\tfrac{1}{s}} \, \, \|f\|_{{L^2_{per}}}^{4-\tfrac{1}{s}} + C_4 \|f\|_{{L^2_{per}}}^{4},
	\end{equation}
for all $f \in H^s_{per}$.
\end{corollary}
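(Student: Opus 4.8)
The plan is to specialize the periodic Gagliardo--Nirenberg inequality of Theorem~\ref{PGN} to the one-dimensional torus and compose it with the critical Sobolev embedding $H^{1/4}_{per}(\mathbb{T}) \hookrightarrow L^4_{per}(\mathbb{T})$, choosing the interpolation parameter so that the resulting exponents match those in \eqref{GN-s1} exactly. The role of the hypothesis $s \in \left(\tfrac14,1\right)$ is precisely to make this choice of parameter admissible, and the appearance of the extra $L^2$-term will be traced to the inhomogeneous norm in \eqref{norm}.

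First I would fix $r = \tfrac14$ and set $k := \tfrac{1}{4s}$, so that $r = ks$ as required by Theorem~\ref{PGN}. Since $s \in \left(\tfrac14,1\right)$, one has $k = \tfrac{1}{4s} \in (0,1)$, which is exactly the range in which Theorem~\ref{PGN} applies and which explains the lower bound $s > \tfrac14$. Applying the theorem with $n=1$ gives a constant $C_3>0$ with $\|f\|_{H^{1/4}_{per}} \leq C_3\|f\|_{L^2_{per}}^{1-k}\|f\|_{H^s_{per}}^{k}$. The Sobolev embedding $H^{1/4}_{per}(\mathbb{T}) \hookrightarrow L^4_{per}(\mathbb{T})$, valid because $r=\tfrac14$ is exactly the critical Sobolev exponent for the embedding into $L^4$ in dimension one, then yields $\|f\|_{L^4_{per}} \leq C\|f\|_{L^2_{per}}^{1-k}\|f\|_{H^s_{per}}^{k}$. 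Raising this to the fourth power and substituting $4k = \tfrac1s$ and $4(1-k) = 4-\tfrac1s$ produces $\|f\|_{L^4_{per}}^4 \leq C^4\,\|f\|_{H^s_{per}}^{1/s}\,\|f\|_{L^2_{per}}^{4-1/s}$.

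It remains to convert the full Sobolev norm $\|f\|_{H^s_{per}}$ into the homogeneous quantity $\|(-\Delta)^{s/2}f\|_{L^2_{per}}$ appearing in \eqref{GN-s1}, and this is where the additional $L^2$-term is generated. Using the norm identity \eqref{norm} I would write $\|f\|_{H^s_{per}}^{1/s} = \big(\|(-\Delta)^{s/2}f\|_{L^2_{per}}^2 + \|f\|_{L^2_{per}}^2\big)^{1/(2s)}$ and apply the elementary inequality $(a+b)^{\theta} \leq C_\theta(a^\theta+b^\theta)$ with $\theta = \tfrac{1}{2s}>0$ and $a,b\geq 0$. This splits the right-hand side into a multiple of $\|(-\Delta)^{s/2}f\|_{L^2_{per}}^{1/s}$ plus a multiple of $\|f\|_{L^2_{per}}^{1/s}$; multiplying through by $\|f\|_{L^2_{per}}^{4-1/s}$ then recombines the second piece into $\|f\|_{L^2_{per}}^{4}$, giving exactly \eqref{GN-s1}.

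I do not expect a genuine technical obstacle: once Theorem~\ref{PGN} is available the argument is an interpolation-plus-embedding computation. The only point requiring care is conceptual rather than computational, namely that one must use the \emph{critical} embedding at $r=\tfrac14$ (rather than a safer subcritical $r>\tfrac14$) in order for the exponents to land precisely on $\tfrac1s$ and $4-\tfrac1s$, and that the extra $\|f\|_{L^2_{per}}^{4}$ term is unavoidable in the periodic setting: it is a direct consequence of interpolating against the inhomogeneous norm $\|\cdot\|_{H^s_{per}}$ of \eqref{norm} rather than against a homogeneous seminorm, which is exactly the correction to \eqref{GNreal} advertised in the introduction.
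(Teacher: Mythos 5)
Your proposal is correct and takes essentially the same route as the paper: the same specialization of Theorem~\ref{PGN} with $n=1$, $r=\tfrac14$ and $k=\tfrac{1}{4s}$ (the paper writes this as $s=\tfrac{1}{4k}$), the same embedding $H^{1/4}_{per}\hookrightarrow L^4_{per}$, and the same splitting of the inhomogeneous norm \eqref{norm} via the elementary inequality $(a+b)^{\theta}\leq C_\theta(a^\theta+b^\theta)$, which the paper takes from \cite[Lemma 3.197]{IorioIorio}. The only difference is the harmless reordering of two steps — you embed into $L^4_{per}$ first and split the norm afterwards, while the paper splits first and embeds last.
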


\begin{proof}
In Theorem $\ref{PGN}$, let us consider $n=1$. We have
$$
\|f\|_{H^r_{per}} \leq{{C_3}} \|f\|_{{L^{2}_{per}}}^{1-k} \, \|f\|_{H^{s}_{per}}^{k},
$$
for all $f \in  H^{s}_{per}$. Here, we  consider $r=k s \in (0,1)$, $ k \in (0,1)$, and $s \in (0,1)$. \\
\indent Several calculations and  the definition of the norm of $H^s_{per}$ given by \eqref{norm} yield the existence of a constant $C_5>0$ where
\begin{eqnarray*}
\|f\|_{{H^r_{per}}}^{4}   \leq   C_5 \|f\|_{{L^2_{per}}}^{4(1-k)} \, \left(   \|(-\Delta)^{\tfrac{s}{2}}f\|_{L^2_{per}}^2+\|f\|_{L^2_{per}}^2   \right)^{2  k},
\end{eqnarray*}
for all $f \in H^{s}_{per}.$ By \cite[Lemma 3.197]{IorioIorio}, we obtain the existence of a constant $C_6>0$ such that
$$
\left(   \|(-\Delta)^{\tfrac{s}{2}}f\|_{L^2_{per}}^2+\|f\|_{L^2_{per}}^2   \right)^{2k} \leq  C_6 \left(   \|(-\Delta)^{\tfrac{s}{2}}f\|_{L^2_{per}}^{4k}+\|f\|_{L^2_{per}}^{4k}   \right).
$$
Thus, there exists a constant $C_7 > 0$ such that
\begin{eqnarray}\label{GN1}
\|f\|_{{H^r_{per}}}^4    \leq   C_7 \|f\|_{{L^2_{per}}}^{4(1-k)} \,  \|(-\Delta)^{\tfrac{s}{2}}f\|_{L^2_{per}}^{4k} + C_7 \|f\|_{{L^2_{per}}}^{4}.
\end{eqnarray}

Choosing $r=\tfrac{1}{4}$ and using the embedding $H^{r}_{per}   \hookrightarrow     L^4_{per}$ (see \cite[Theorem 4.2]{Ambrosio}), we obtain from \eqref{GN1} for $s=\tfrac{1}{4k}  \in \left( \tfrac{1}{4}, 1\right)$ that
\begin{eqnarray*}\label{GNP1}
\|f\|_{{L^4_{per}}}^4   \leq    C_4 \|(-\Delta)^{\tfrac{s}{2}}f\|_{L^2_{per}}^{\tfrac{1}{s}} \, \|f\|_{{L^2_{per}}}^{4-\tfrac{1}{s}} + C_4 \|f\|_{{L^2_{per}}}^{4},
\end{eqnarray*}
for some constant $C_4>0$.
\end{proof}

 The existence of global solutions in time for the Cauchy problem associated to the equation $(\ref{fNLS1})$ is obtained by  the combination of Corollary $\ref{PGN1}$ and the conserved quantities $E$ and $F$ given  by \eqref{E} and $\eqref{F}$, respectively. In fact, as mentioned in the Introduction, we see that for $s \in \left( \tfrac{1}{2}, 1 \right)$, there exists a local solution $u \in C([0,T], {H}^s_{per})$ of the Cauchy problem \eqref{OurCauchyProblem1} associated to the equation $(\ref{fNLS1})$ with initial data $u_0 \in{H}^s_{per}$ (see \cite{DemirbasErdoganTzirakis} and \cite{ChoHwangKwonLee}). For all $t\geq0$, we have
\begin{equation*}
\|(-\Delta)^{\frac{s}{2}}u(t)\|^2_{L^2_{per}} = 2E(u_0) + \frac{1}{2}\|u(t)\|^4_{L^4_{per}}.
\end{equation*}

By Corollary $\ref{PGN1}$ we obtain the existence of a constant $C>0$ such that
\begin{equation}\label{GN-global}
	\|(-\Delta)^{\frac{s}{2}}u(t)\|^2_{L^2_{per}} \leq 2E(u_0) +  C \|u_0\|^{4-\frac{1}{s}}_{L^2_{per}}\|(-\Delta)^{\frac{s}{2}}u(t)\|^{\frac{1}{s}}_{L^2_{per}} + C\|u_0\|^4_{L^2_{per}},
\end{equation}
where we are using the fact that the  $L^2_{per}$-norm is a conserved quantity (see \eqref{F}).

\indent Therefore, by \eqref{GN-global}, we obtain the following scenario for global solutions $H_{per}^s$:
\begin{itemize}
	\item When $s \in \left(\frac{1}{2},1\right]$, we can proceed similarly to the authors in \cite{boling} to conclude the existence of global solutions in time.
	
	\item When $s=\frac{1}{2}$, we use again \cite{boling} to conclude the existence of global solutions in time for $||u_0||_{L_{per}^2}$ small enough. It is also expected blow-up in finite time for large $||u_0||_{L_{per}^2}$.

\end{itemize}

Summarizing our analysis performed above, we obtain the following global well-posedness result for the Cauchy problem associated to the fNLS equation $(\ref{fNLS1})$.

\begin{proposition} \label{gwp}
Let $s \in \left( \frac{1}{2},1 \right]$. The Cauchy problem associated to the equation $(\ref{fNLS1})$ is globally well-posed in ${H}^{s}_{per}$. More precisely, for any $u_0 \in {H}^{s}_{per}$ there exists an unique global  solution \linebreak $u \in C([0,+\infty), {H}^s_{per})$ such that $u(0)=u_0$ and it satisfies \eqref{fNLS1}. Moreover, for each $T>0$ the mapping
$$
u_0 \in  {H}^{s}_{per} \longmapsto  u \in C([0,T], {H}^s_{per})
$$
is continuous.\end{proposition}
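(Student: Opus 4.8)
The plan is to upgrade the local theory of \cite{DemirbasErdoganTzirakis} and \cite{ChoHwangKwonLee} to a global one by means of a uniform a priori bound coming from the conserved quantities $E$ and $F$ in \eqref{E}--\eqref{F} together with the periodic Gagliardo--Nirenberg inequality of Corollary \ref{PGN1}. For $s\in(\tfrac12,1)$ those references provide, for each $u_0\in H^s_{per}$, a unique maximal solution $u\in C([0,T_{max}),H^s_{per})$ depending continuously on the data. The first thing I would extract from them is the standard structural features of such a construction: the local existence time depends only on $\|u_0\|_{H^s_{per}}$, and there is a blow-up alternative, namely if $T_{max}<\infty$ then $\limsup_{t\to T_{max}^-}\|u(t)\|_{H^s_{per}}=+\infty$. (The endpoint $s=1$ is the classical cubic NLS and is covered by the standard theory.) The whole argument then reduces to showing that $\|u(t)\|_{H^s_{per}}$ cannot blow up in finite time.

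Second, I would derive the a priori bound. Conservation of $E$ along the flow yields the identity
\[
\|(-\Delta)^{\frac s2}u(t)\|_{L^2_{per}}^2=2E(u_0)+\tfrac12\|u(t)\|_{L^4_{per}}^4,
\]
and estimating $\|u(t)\|_{L^4_{per}}^4$ by Corollary \ref{PGN1}, together with the conservation $\|u(t)\|_{L^2_{per}}=\|u_0\|_{L^2_{per}}$, produces exactly \eqref{GN-global}. Writing $X(t):=\|(-\Delta)^{\frac s2}u(t)\|_{L^2_{per}}^2$, inequality \eqref{GN-global} takes the form $X(t)\le A+B\,X(t)^{\frac1{2s}}$, valid pointwise in $t$, where $A\in\R$ and $B\ge0$ depend only on $E(u_0)$ and $\|u_0\|_{L^2_{per}}$. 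The decisive observation is that $s>\tfrac12$ forces the exponent $\tfrac1{2s}<1$, so the right-hand side grows sublinearly in $X$; consequently the set of nonnegative reals satisfying $X\le A+B\,X^{\frac1{2s}}$ is bounded by a constant $M=M\big(E(u_0),\|u_0\|_{L^2_{per}}\big)$, whence $X(t)\le M$ for every $t$ in the interval of existence. Adding back the conserved $L^2_{per}$-norm gives the uniform bound $\|u(t)\|_{H^s_{per}}^2\le M+\|u_0\|_{L^2_{per}}^2$.

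Third, I would globalize and recover continuous dependence. Since $\|u(t)\|_{H^s_{per}}$ stays bounded by a constant depending only on $u_0$, the blow-up alternative rules out $T_{max}<\infty$, so $T_{max}=+\infty$ and $u\in C([0,+\infty),H^s_{per})$; uniqueness is inherited directly from the local theory. For continuous dependence on a fixed interval $[0,T]$, I would use that the uniform bound persists for initial data in a neighborhood of $u_0$ (by continuity of $E$ and $F$ on $H^s_{per}$), fix a common local existence time $\delta>0$ dictated by this bound, and patch the continuous-dependence estimate of the local theory over the finitely many subintervals $[0,\delta],[\delta,2\delta],\dots$ that cover $[0,T]$.

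The main obstacle is sharp bookkeeping of exponents rather than any deep estimate: the argument hinges entirely on the strict inequality $\tfrac1{2s}<1$, which is precisely why the statement is restricted to $s>\tfrac12$. At the critical value $s=\tfrac12$ the exponent equals $1$, the sublinearity is lost, and one can only close the bound for small $\|u_0\|_{L^2_{per}}$, in agreement with the expected finite-time blow-up for large data noted before the statement. A secondary point requiring care is verifying that the local existence time in \cite{DemirbasErdoganTzirakis} and \cite{ChoHwangKwonLee} genuinely depends only on the $H^s_{per}$-norm, since both the continuation and the patching steps rely on this uniformity.
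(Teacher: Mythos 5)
Your proposal is correct and follows essentially the same route as the paper: local well-posedness from \cite{DemirbasErdoganTzirakis} and \cite{ChoHwangKwonLee}, conservation of $E$ and $F$, and the periodic Gagliardo--Nirenberg inequality of Corollary \ref{PGN1} to obtain the a priori bound \eqref{GN-global}, with the restriction $s>\tfrac12$ entering exactly through the sublinear exponent $\tfrac{1}{2s}<1$. The only difference is one of explicitness: the paper delegates the final globalization step to ``proceeding as in \cite{boling}'', whereas you spell out the sublinearity argument, the blow-up alternative, and the patching for continuous dependence, all of which are consistent with the paper's intent.
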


\section{Existence of periodic waves}\label{evensolutions}

In this section, we prove the existence of the even periodic wave solutions of \eqref{EDO1} using two approaches. First, we use a variational characterization by minimizing a suitable constrained functional to obtain positive and even periodic waves with single-lobe profile. Second, we present some tools concerning the existence of small amplitude periodic waves using bifurcation theory. In addition, it is possible to show that such waves are also solutions for the minimization problem presented in the next subsection.

\subsection{Existence of periodic waves via minimizers}

In this subsection, we prove the existence of even periodic solutions for \eqref{EDO1}  by considering the variational problem given by \eqref{minimization1}. First, we define the of solution with \textit{single-lobe} profile.
\begin{definition}\label{single-lobe}
We say that a periodic wave satisfying the  equation \eqref{EDO1} has single-lobe profile if there exist only one maximum and minimum on $[-\pi,\pi]$. Without loss of generality, we assume that the  maximum point occurs at $x=0$.
\end{definition}

For $\tau>0$, let us consider the set
\begin{equation}\label{minimizerset}
\mathcal{Y}_\tau:=\left\{u \in {H}^s_{per} \; ; \; \|u\|_{L_{per}^4}^4=\tau \right\}.
\end{equation}
 For $\omega>0$, we define the functional $\mathcal{B}_\omega :  {H}^s_{per} \longrightarrow \mathbb{R}$ given by
\begin{equation}\label{functional1}
\mathcal{B}_\omega (u):= \frac{1}{2}\int_{-\pi}^{\pi} |(-\Delta)^{\frac{s}{2}} u|^2+\omega\,|u|^2\; dx,
\end{equation}
$\text{for all} \; u \in {H}^s_{per}.$

We see that
\begin{equation}\label{positivity}
\mathcal{B}_\omega (u) \geq 0 \quad \text{and} \quad G(u) \leq \mathcal{B}_\omega (u)
\end{equation}
We have the following result of existence:

\begin{proposition}\label{theorem1complex}
Let $s \in \left(\tfrac{1}{4}, 1 \right]$ and $\tau, \omega>0$ be fixed. The minimization problem
\begin{equation}\label{minimizationproblem}
\Gamma_\omega:= \inf_{u \in \mathcal{Y}_\tau} \mathcal{B}_\omega(u)
\end{equation}
has at least one solution, that is, there exists a complex-valued function $\Phi \in\mathcal{Y}_\tau$ such that\linebreak $\mathcal{B}_\omega(\Phi)=\Gamma_\omega$. Moreover, $\Phi$ satisfies
$$
(-\Delta)^s \Phi +\omega \Phi - |\Phi|^2 \Phi =0.
$$
%Moreover, $\varphi$ is smooth, satisfies \eqref{EDO1} and has an even single-lobe profile in the sense of Definition \ref{single-lobe}.
\end{proposition}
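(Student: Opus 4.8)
The plan is to run the direct method of the calculus of variations on the constrained functional $\mathcal{B}_\omega$ over the set $\mathcal{Y}_\tau$. First I would verify that the infimum $\Gamma_\omega$ is well-defined and strictly positive. Since $s\in(\tfrac14,1]$ and $\omega>0$, the quantity $\mathcal{B}_\omega(u)=\tfrac12\|(-\Delta)^{s/2}u\|_{L^2_{per}}^2+\tfrac{\omega}{2}\|u\|_{L^2_{per}}^2$ is equivalent to (a constant multiple of) the full $H^s_{per}$-norm squared, by the norm representation \eqref{norm}. On the constraint set $\mathcal{Y}_\tau$ we have $\|u\|_{L^4_{per}}^4=\tau>0$, so by the periodic Gagliardo--Nirenberg inequality (Corollary \ref{PGN1}) we obtain a lower bound of the form $\tau=\|u\|_{L^4_{per}}^4\le C_4\|(-\Delta)^{s/2}u\|_{L^2_{per}}^{1/s}\|u\|_{L^2_{per}}^{4-1/s}+C_4\|u\|_{L^2_{per}}^4$, which forces $\mathcal{B}_\omega(u)$ to stay bounded away from zero on $\mathcal{Y}_\tau$; hence $\Gamma_\omega>0$ and $\mathcal{Y}_\tau\neq\varnothing$.

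Next I would take a minimizing sequence $(u_n)\subset\mathcal{Y}_\tau$ with $\mathcal{B}_\omega(u_n)\to\Gamma_\omega$. Because $\mathcal{B}_\omega(u_n)$ is bounded and controls the $H^s_{per}$-norm, the sequence is bounded in $H^s_{per}$; passing to a subsequence, there exists $\Phi\in H^s_{per}$ with $u_n\rightharpoonup\Phi$ weakly in $H^s_{per}$. Here I would invoke the compact Sobolev embedding $H^s_{per}\hookrightarrow\hookrightarrow L^4_{per}$, valid for $s>\tfrac14$ (consistent with the embedding $H^{1/4}_{per}\hookrightarrow L^4_{per}$ used in the proof of Corollary \ref{PGN1}), to extract strong convergence $u_n\to\Phi$ in $L^4_{per}$. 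Strong $L^4_{per}$-convergence passes to the constraint, giving $\|\Phi\|_{L^4_{per}}^4=\lim\|u_n\|_{L^4_{per}}^4=\tau$, so $\Phi\in\mathcal{Y}_\tau$ and in particular $\Phi\not\equiv0$. Weak lower semicontinuity of the (equivalent) $H^s_{per}$-norm then yields $\mathcal{B}_\omega(\Phi)\le\liminf_n\mathcal{B}_\omega(u_n)=\Gamma_\omega$, while $\Phi\in\mathcal{Y}_\tau$ forces $\mathcal{B}_\omega(\Phi)\ge\Gamma_\omega$; hence $\mathcal{B}_\omega(\Phi)=\Gamma_\omega$ and $\Phi$ is a minimizer.

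Finally I would derive the Euler--Lagrange equation. Since $\Phi$ minimizes the smooth functional $\mathcal{B}_\omega$ subject to the smooth constraint $u\mapsto\|u\|_{L^4_{per}}^4=\tau$, and the constraint functional has nonvanishing derivative at $\Phi$ (as $\Phi\neq0$), the Lagrange multiplier theorem provides $\theta\in\mathbb{R}$ with $\mathcal{B}_\omega'(\Phi)=\theta\,(\|\cdot\|_{L^4_{per}}^4)'(\Phi)$ in the dual sense, which reads $(-\Delta)^s\Phi+\omega\Phi=2\theta|\Phi|^2\Phi$ weakly. Testing against $\Phi$ and using $\mathcal{B}_\omega(\Phi)=\Gamma_\omega>0$ shows $\theta>0$, and a rescaling $\Phi\mapsto c\Phi$ with an appropriate $c>0$ (absorbing $2\theta$, which is permissible since rescaling only changes the value of $\tau$, and $\tau>0$ was arbitrary) normalizes the equation to $(-\Delta)^s\Phi+\omega\Phi-|\Phi|^2\Phi=0$.

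I expect the main obstacle to be the compactness step: one must be sure the compact embedding $H^s_{per}\hookrightarrow\hookrightarrow L^4_{per}$ holds down to $s>\tfrac14$ so that the limit stays on the constraint (this is exactly what keeps $\Gamma_\omega$ attained and $\Phi\neq0$), and one must handle the complex-valued setting carefully — the functional and constraint depend only on $|u|$ and $|(-\Delta)^{s/2}u|$, so the variational structure and the real-valued arguments transfer verbatim, as the paper already notes. The Lagrange-multiplier bookkeeping (fixing the scaling so the coefficient of the cubic term is exactly $1$) is routine but should be stated cleanly to match the displayed equation.
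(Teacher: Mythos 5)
Your proposal is correct and follows essentially the same route as the paper: the direct method with a minimizing sequence, boundedness via the equivalence of $\sqrt{2\mathcal{B}_\omega}$ with the $H^s_{per}$-norm, the compact embedding $H^s_{per}\hookrightarrow L^4_{per}$ for $s>\tfrac14$ to pass the constraint to the weak limit, weak lower semicontinuity to get attainment, and then the Lagrange multiplier theorem followed by a rescaling to normalize the coefficient of the cubic term to $1$. The only (harmless) additions on your side are the explicit proof that $\Gamma_\omega>0$ and the explicit sign/rescaling bookkeeping for the multiplier, which the paper instead delegates to a cited reference.
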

\begin{proof}
First we claim that the functional $\mathcal{B}_\omega$ induces an equivalent norm in $ {H}^s_{per}$. Indeed, being the norm in $ {H}^s_{per}$ given as in \eqref{norm} and since the functional $\mathcal{B}_\omega$ can be written as
$$
2\mathcal{B}_\omega(u)=\|(-\Delta)^{\frac{s}{2}} u\|_{{L}_{per}^2}^2+\omega\|u\|^2_{{L}_{per}^2},\; u \in  {H}^s_{per}.
$$
It is easy to see that there exist constants $c_0, c_1>0$ so that
\begin{equation}\label{9}
0 \leq c_0\|u\|_{{H}_{per}^s} \leq  \sqrt{2\mathcal{B}_\omega(u)} \leq c_1 \|u\|_{{H}_{per}^s}.
\end{equation}
Moreover, by \eqref{positivity}, one has $\Gamma_\omega \geq 0$. %Poincaré-Wirtinger (see, [Ambrosio, page 6]

Using the smoothness of the functional $\mathcal{B}_\omega$, we may consider a sequence of minimizers \\ \mbox{$(u_n)_{n\in \mathbb{N}} \subset Y_\tau$} such that
\begin{equation}\label{10}
\mathcal{B}_\omega(u_n) \longrightarrow \Gamma_\omega, \; \; \; n \rightarrow \infty.
\end{equation}

By \eqref{10}, we have that the sequence $(\mathcal{B}_\omega(u_n))_{n \in \mathbb{N}} \subset \mathbb{R}$ is bounded, so that it is bounded in ${H}_{per}^s$. Since $s\in \left(\frac{1}{4},1\right]$ and the Sobolev space ${H}^s_{per}$ is reflexive, there exists $\Phi \in {H}^s_{per}$ such that (modulus a subsequence),
\begin{equation}\label{11}
u_n \xrightharpoonup{\quad \:} \Phi \; \text{weakly in} \; {H}^s_{per}.
\end{equation}

Again, since $s \in \left( \tfrac{1}{4},1\right]$,  we obtain that the embedding
\begin{eqnarray}\label{12}
{H}^s_{per} \xhookrightarrow{\quad \:}  {L}^4_{per}
\end{eqnarray}
is compact (see \cite[Theorem 2.8]{BergerSchechter} or \cite[Theorem 5.1]{Amann}). Thus, modulus a subsequence we also have
\begin{equation}\label{13}
u_n \longrightarrow \Phi \; \text{in} \; {L}^4_{per}.
\end{equation}
Moreover, using the estimate
\begin{eqnarray*}
\bigg|\int_{-\pi}^{\pi} \big(|u_n|^4-|\Phi|^4\big)\; dx \bigg| & \leq & \int_{-\pi}^{\pi} \big||u_n|^4-|\Phi^4|\big|\; dx \\
& \leq & \big( \|\Phi\|^3_{{L}_{per}^4}+\|\Phi\|_{{L}_{per}^4}^2\,\|u_n\|_{{L}_{per}^4}+\|\Phi\|_{{L}_{per}^4}\,\|u_n\|^2_{{L}_{per}^4}+\|u_n\|_{{L}_{per}^4}^3\big)\|u_n-\Phi\|_{{L}_{per}^4}
\end{eqnarray*}
and \eqref{13}, it follows that $\|\Phi\|_{L_{per}^4}^4=\tau$. %It should be noticed that, due to the embedding \eqref{12} there exist a constant $c_2>0$ such that
%$$
%\|u\|_{L^4} \leq c_2 \|u\|_{H^s},\; \text{for all} \; u \in \mathbb{H}^s_{per}
%$$
%From that and \eqref{9}, we infer that $\Gamma_\omega >0$.
Furthermore, since $\mathcal{B}_\omega$ is  lower semi-continuous, we have
$$
\mathcal{B}_\omega(\Phi) \leq \liminf_{n \to \infty} \mathcal{B}_\omega(u_n)
$$
that is,
\begin{equation}\label{14}
\mathcal{B}_\omega(\Phi) \leq  \Gamma_\omega.
\end{equation}

On the other hand, once $\Phi$ satisfies $\|\Phi\|_{L_{per}^4}^4=\tau$, we obtain
\begin{equation}\label{15}
\mathcal{B}_\omega(\Phi) \geq \Gamma_\omega.
\end{equation}

By \eqref{14} and \eqref{15} we conclude
$$
\mathcal{B}_\omega(\Phi)= \Gamma_\omega=\inf_{u \in \mathcal{Y}_\tau} \mathcal{B}_\omega(u).
$$
In other words, the function $\Phi \in \mathcal{Y}_\tau \subset {H}^s_{per}$ is a minimizer of the problem \eqref{minimizationproblem}. Note that since $\tau>0$, we see that $\Phi$ is a complex-valued function such that $\Phi \notequiv 0$.

Next, by the Lagrange Multiplier Theorem, there exists a constant $c_2 \in \mathbb{R}$ so that
$$
(-\Delta)^s \Phi+\omega \Phi=c_2 |\Phi|^2 \Phi.
$$
A standard scaling argument  allows us to choose $c_2=1$ (see  \cite[page 10]{AmaralBorluk}). Thus, we have that $\Phi$ is a periodic minimizer of the problem $(\ref{minimization1})$ and it satisfies the equation
\begin{equation}\label{EDO2evencomplex}
(-\Delta)^s \Phi +\omega \Phi - |\Phi|^2 \Phi =0.
\end{equation}
\end{proof}

%=====================================================

\begin{remark}\label{remarkreal}
Let $\Phi \in {H}^s_{per}$ be the minimizer obtained by Theorem \ref{theorem1complex}. It is easy to check (see, for instance, \cite[\textit{Lemma 2.2}]{ClaassenJohnson}) that $\overline{\Phi}$ satisfies
$$
\mathcal{B}_\omega(\overline{\Phi})=\Gamma_\omega.
$$
In addition,
$$
(-\Delta)^s \overline{\Phi} +\omega \overline{\Phi} - |\overline{\Phi}|^2 \overline{\Phi} =0.
$$

In order to guarantee the existence of real-valued solutions for the equation $(\ref{EDO2evencomplex})$, we are going to assume that $\Phi=\overline{\Phi}$. Thus, $\Phi$ has the form $\Phi=\varphi+i0$, where $\varphi \in H^s_{per}$ satisfies $(\ref{EDO1})$ and the minimization problem
\begin{equation}\label{realminimizationproblem}
\mathcal{B}_{\omega}(\varphi) \equiv \mathcal{B}_{\omega}(\Phi) =  \Gamma_\omega.
\end{equation}
\end{remark}

As a consequence of the assumption in Remark \ref{remarkreal}, we have the following result.

\begin{proposition}[Existence of Even Solutions]\label{theorem1even}
Let $s \in \left(\tfrac{1}{4}, 1 \right]$ and $\omega>0$ be fixed. Let $\varphi \in H^s_{per}$ be the real-valued periodic minimizer  given by the Remark \ref{remarkreal}. If $\omega \in \left(0, \tfrac{1}{2}\right]$ then $\varphi$ is the constant solution and if $\omega \in \left(\tfrac{1}{2}, +\infty \right)$ then $\varphi$ is an even periodic single-lobe solution for the equation $(\ref{EDO1})$.
\end{proposition}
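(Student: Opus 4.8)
The plan is to start from the real-valued minimizer $\varphi$ of Remark~\ref{remarkreal}, which already solves \eqref{EDO1}, and to extract two separate pieces of information: its symmetry and monotonicity, and the exact value of $\omega$ separating the constant profile from a genuine single-lobe. A preliminary observation is that the \emph{shape} of the minimizer is independent of $\tau$: if $u$ minimizes $\mathcal B_\omega$ on $\mathcal Y_\tau$ then $\lambda u$ minimizes on $\mathcal Y_{\lambda^4\tau}$ with $\mathcal B_\omega(\lambda u)=\lambda^2\mathcal B_\omega(u)$, so whether the minimizer is constant or single-lobe depends only on $\omega$, and I may fix $\tau>0$ at will.

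To obtain evenness and the single-lobe profile I would invoke the periodic symmetric-decreasing rearrangement $u\mapsto u^\ast$. Since rearrangement preserves the $L^2_{per}$ and $L^4_{per}$ norms and, by the periodic fractional P\'olya--Szeg\H{o} inequality, does not increase $\|(-\Delta)^{s/2}\cdot\|_{L^2_{per}}$, one gets $\mathcal B_\omega(\varphi^\ast)\le\mathcal B_\omega(\varphi)=\Gamma_\omega$. Replacing $\varphi$ first by $|\varphi|$ (using the fractional Kato/diamagnetic inequality $\|(-\Delta)^{s/2}|\varphi|\|_{L^2_{per}}\le\|(-\Delta)^{s/2}\varphi\|_{L^2_{per}}$, whose strictness rules out sign changes) lets me assume $\varphi\ge0$. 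Minimality then forces equality in P\'olya--Szeg\H{o}, and its rigidity case gives that $\varphi$ is a translate of $\varphi^\ast$, hence even and nonincreasing on $[0,\pi]$; this is precisely a single-lobe profile in the sense of Definition~\ref{single-lobe}, unless $\varphi$ is constant.

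Next I would locate the threshold by comparing the constant competitor $u_c\equiv a_0$, $2\pi a_0^4=\tau$, with the family $u_b=a(b)+b\cos x$, where $a(b)$ is adjusted so that $\|u_b\|_{L^4_{per}}^4=\tau$; a short computation gives $a(b)^2=a_0^2-\tfrac32 b^2+O(b^4)$. Using $\int_{-\pi}^{\pi}|(-\Delta)^{s/2}(b\cos x)|^2\,dx=\pi b^2$ and $\int_{-\pi}^\pi u_b^2\,dx=2\pi a(b)^2+\pi b^2$, I obtain
\begin{equation*}
\mathcal B_\omega(u_b)-\mathcal B_\omega(u_c)=\tfrac{\pi}{2}(1-2\omega)\,b^2+O(b^4).
\end{equation*}
For $\omega>\tfrac12$ this is negative for small $b\neq0$, so the constant is not a minimizer; since a minimizer exists and is even single-lobe-or-constant by the previous paragraph, it must be a genuine single-lobe, settling the case $\omega\in(\tfrac12,+\infty)$. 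The same expansion shows the constant is at least a local minimizer for $\omega\le\tfrac12$.

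The hard part is the remaining range $\omega\in(0,\tfrac12]$, where I must upgrade local to global minimality, i.e. show $\mathcal B_\omega(u)\ge\mathcal B_\omega(u_c)=\tfrac{\omega}{2}\sqrt{2\pi\tau}$ for every $u\in\mathcal Y_\tau$. Since $\int u^2\le\sqrt{2\pi\tau}$ by Cauchy--Schwarz (with equality only for the constant), this is equivalent to $\int|(-\Delta)^{s/2}u|^2\ge\omega\big(\sqrt{2\pi\tau}-\int u^2\big)$, and as $\omega\le\tfrac12$ it suffices to prove the sharp, $\omega$-independent inequality
\begin{equation*}
\int_{-\pi}^\pi|(-\Delta)^{s/2}u|^2\,dx\ge\tfrac12\Big(\sqrt{2\pi\tau}-\int_{-\pi}^\pi u^2\,dx\Big),\qquad u\in\mathcal Y_\tau.
\end{equation*}
This inequality is attained with equality at the constant and, to second order, along the $\cos x$ direction, so it is genuinely critical; establishing it globally is the main obstacle. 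I expect to reach it by reducing, via the rearrangement above, to nonnegative even single-lobe $u$ and then invoking the sharp periodic Gagliardo--Nirenberg/P\'olya--Szeg\H{o} machinery, or alternatively by ruling out non-constant even single-lobe solutions of \eqref{EDO1} for $\omega\le\tfrac12$ through the bifurcation analysis of the next subsection, where the linearization about the constant $\sqrt{\omega}$ first degenerates exactly at $\omega=\tfrac12$ on the mode $\cos x$.
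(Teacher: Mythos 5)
The genuine gap is the case $\omega\in\left(0,\tfrac12\right]$, and it sits exactly where you flag it. You reduce global minimality of the constant to the sharp inequality $\int_{-\pi}^\pi|(-\Delta)^{s/2}u|^2\,dx\ge\tfrac12\bigl(\sqrt{2\pi\tau}-\int_{-\pi}^\pi u^2\,dx\bigr)$ on $\mathcal{Y}_\tau$ and leave it unproven, so this half of Proposition~\ref{theorem1even} is not established by your argument. The paper never attempts such an inequality: it settles the whole dichotomy by a spectral count. Any constrained minimizer must have ${\rm n}(\mathcal{L}_1)=1$ (one constraint, and $(\mathcal{L}_1\varphi,\varphi)_{L^2_{per}}<0$), while at the constant $\sqrt{\omega}$ one has $\mathcal{L}_1=(-\Delta)^s-2\omega$, whose number of negative eigenvalues equals $1$ if and only if $\omega\le\tfrac12$ by \cite[Example 4.4]{HurJohnsonMartin} (for $\omega>\tfrac12$ the modes $k=0,\pm1$ all give negative eigenvalues). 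This excludes the constant when $\omega>\tfrac12$, and the paper concludes on this basis that the constant is the minimizer when $\omega\le\tfrac12$. Note also that your proposed fallback --- ruling out nonconstant solutions of \eqref{EDO1} for $\omega\le\tfrac12$ via the bifurcation analysis --- cannot work as stated: in the paper's own expansion \eqref{varphi-stokes3} the coefficient $\gamma=\tfrac{15}{2}-\tfrac{9}{2(2^{2s}-1)}$ is negative for $s$ close to $\tfrac14$ (precisely when $2^{2s}<\tfrac85$), so the bifurcation from the constant is then subcritical and nonconstant solutions do exist for $\omega$ slightly below $\tfrac12$; a correct argument must exclude them as \emph{minimizers}, not as solutions.

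The rest of your proposal is sound and partly diverges from the paper in instructive ways. For the single-lobe structure, the paper does what you do --- symmetric-decreasing rearrangement plus the periodic fractional P\'olya--Szeg\H{o} inequality of \cite[Lemma A.1]{ClaassenJohnson} --- but it avoids your rigidity step entirely: since $\mathcal{Y}_\tau$ is invariant under rearrangement and $\mathcal{B}_\omega(\varphi^{\star})\le\Gamma_\omega$, the function $\varphi^{\star}$ is itself a minimizer and the paper simply renames $\varphi:=\varphi^{\star}$. This matters because the equality/rigidity case of the fractional P\'olya--Szeg\H{o} inequality on the torus is a nontrivial theorem you would otherwise need, and your strict Kato inequality argument for nonnegativity fails at $s=1$ (where $|\nabla|u||=|\nabla u|$ a.e.); moreover Definition~\ref{single-lobe} does not require positivity, so that detour is unnecessary. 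Finally, your second-order competitor expansion with $u_b=a(b)+b\cos x$ is correct and serves as an elementary, self-contained substitute for the paper's spectral argument at the constant; both express the same second-variation fact, yours by hand, the paper's by citing the oscillation theory of \cite{HurJohnsonMartin}.
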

\begin{proof}
First, by a bootstrapping argument we infer that $\varphi \in H^\infty_{per}$ (see \cite[Propostion 3.1]{Cristofani} and \cite[Proposition 2.4]{NataliLePelinovsky}). In addition, the solution $\varphi$ can be assumed even\footnote{Since we can minimize the functional $\mathcal{B}_\omega$ over the space $H^s_{per,e}$ in order to obtain an even minimizer $\varphi$, as in \cite[Theorem 4.1]{NataliLePelinovskyKDV}}.

Since the solution can be constant, we need to avoid this case in order to guarantee that the minimizer has a single-lobe profile. First, we note that the positive constant solution of the equation \eqref{EDO1} is $\varphi\equiv \sqrt{\omega}$. In this case the operator $\mathcal{L}_1= (-\Delta)^s+\omega-3\varphi^2$ is written as
$
\mathcal{L}_1= (-\Delta)^s-2\omega.
$
As a result of \cite[Example 4.4]{HurJohnsonMartin}, we obtain that ${\rm n}(\mathcal{L}_1)=1$ if and only if $\omega \in \left(0, \tfrac{1}{2}\right]$. On the other hand, it is easy to see that if $\varphi$ is nonconstant then $\mathcal{L}_1(\varphi')=0$ so that ${\rm n}(\mathcal{L}_1)=1$. Thus, we conclude that the constant solution $\varphi=\sqrt{\omega}$ is a mininimizer of \eqref{minimizationproblem} for $\omega \in \left(0, \tfrac{1}{2}\right]$ and for $\omega \in \left(\tfrac{1}{2}, +\infty \right)$, solution  $\varphi$ is a nonconstant minimizer. Furthermore, in the latter case,  we can consider the symmetric rearrangements $\varphi^{\star}$ associated to $\varphi$ and it is well known that such rearrangements are invariant under the constraint of $\mathcal{Y}_\tau$ by using \cite[Appendix A]{ClaassenJohnson}. Moreover, due to the fractional Polya-Szeg\"o inequality, in \cite[Lemma A.1]{ClaassenJohnson}, %(see also \cite[Theorem 1.1]{Park}),
 we have
$$
\int_{-\pi}^\pi \big((-\Delta)^{\frac{s}{2}}\varphi^{\star}\big)^2\;dx \leq \int_{-\pi}^\pi\big((-\Delta)^{\frac{s}{2}}\varphi\big)^2\;dx .
$$

Thus, by \eqref{realminimizationproblem}, we obtain $\mathcal{B}_\omega(\varphi^{\star}) = \Gamma_\omega$ with $\varphi^{\star}$ being symmetrically decreasing away from the maximum point $x=0$. To simplify the notation, we assume that $\varphi=\varphi^{\star}$, so that $\varphi$ has an even single-lobe profile according to the Definition $\ref{single-lobe}$.
\end{proof}
\subsection{Small-amplitude periodic waves}

The existence and convenient formulas for the small amplitude periodic waves associated to the equation \eqref{EDO1} will be shown in this subsection. After that, we show that the local bifurcation theory used to determine the existence of small amplitude waves can be extended and the local solutions can be considered as global for a fixed $\omega>\frac{1}{2}$. This fact is a very important feature in our context since it can be used as an alternative form to prove the existence of periodic even solutions (not necessarily having a single-lobe profile) for the equation $(\ref{EDO1})$ when $s\in (0,1)$. To do so, we use the theory contained in \cite[Chapters 8 and 9]{buffoni-toland}.

First, we shall give some steps to prove the existence of small amplitude periodic waves. In fact, for $s \in (0,1]$, let $F: H_{per,e}^{2s} \times (\tfrac{1}{2},+\infty) \rightarrow L_{per,e}^2$ be the smooth map defined by
\begin{equation}\label{F-lyapunov}
	F(g,\omega) = (-\Delta)^s g + \omega g - g^3.
\end{equation}
We see that $F(g,\omega) = 0$ if and only if $g \in H_{per,e}^{2s}$ satisfies \eqref{EDO1} with corresponding wave frequency $\omega \in (\tfrac{1}{2}, +\infty)$. The Fr\'echet derivative of the function $F$ with respect to the first variable is then given by
\begin{equation}\label{Dg}
	D_g F(g, \omega) f = \left( (-\Delta)^s + \omega - 3g^2 \right) f.
\end{equation}
Let $\omega_0 > \frac{1}{2}$ be fixed. At the point $(\sqrt{\omega_0}, \omega_0)$, we have that
\begin{equation}
	D_gF(\sqrt{\omega_0}, \omega_0) = (-\Delta)^s + \omega_0 - 3(\sqrt{\omega_0})^2 = (-\Delta)^s - 2\omega_0.
\end{equation}

The nontrivial kernel of $D_gF(\sqrt{\omega_0}, \omega_0)$ is determined by functions $h \in H_{per,e}^{2s}$ such that
\begin{equation}
	\widehat{h}(k) (-2\omega_0 + |k|^{2s}) = 0.
\end{equation}
We see that $D_g F(\sqrt{\omega_0}, \omega_0)$ has the one-dimensional kernel if and only if $\omega_0 = \frac{\:\:|k|^{2s}}{2}$ for some $k \in \mathbb{Z}$. In this case, we have
\begin{equation}
	{\rm Ker}D_gF(\sqrt{\omega_0}, \omega_0) = [\tilde{\varphi}_k],
\end{equation}
where $\tilde{\varphi}_k(x) = \cos(kx)$.

The local bifurcation theory contained in \cite[Chapter 8.4]{buffoni-toland} enables us to guarantee the existence of an open interval $I$ containing $\omega_0 > \tfrac{1}{2}$, an open ball $B(0,r) \subset H_{per,e}^{2s}$ for some $r>0$ and a unique smooth mapping
$$\omega \in I \longmapsto \varphi:= \varphi_\omega \in B(0,r) \subset H_{per,e}^{2s}$$
such that $F(\varphi, \omega) = 0$ for all $\omega \in I$ and $\varphi\in B(0,r)$.

For each $k \in \mathbb{N}$, the point $(\sqrt{\tilde{\omega}_k}, \tilde{\omega}_k)$ where $\tilde{\omega}_k := \frac{|k|^{2s}}{2}$ is a bifurcation point. Moreover, there exists $a_0 > 0$ and a local bifurcation curve
\begin{equation}\label{localcurve}
	a \in (0,a_0) \longmapsto (\varphi_{k,a}, \omega_{k,a}) \in H_{per,e}^{2s} \times (0,+\infty)
\end{equation}
which emanates from the point $(\sqrt{\tilde{\omega}_k}, \tilde{\omega}_k)$ to obtain small amplitude even $\frac{2\pi}{k}$-periodic solutions for the equation \eqref{EDO1}. In addition, we have $\omega_{k,0} = \tilde{\omega}_k$, $D_a \varphi_{k,0} = \tilde{\varphi}_k$ and all solutions of $F(g, \omega) = 0$ in a neighbourhood of $(\sqrt{\tilde{\omega}_k}, \tilde{\omega}_k)$ belongs to the curve in $(\ref{localcurve})$ depending on $a \in (0,a_0)$.

%In addition, as \cite[Lemma 5.3]{bruell-dhara}, it is possible to prove that every bounded and closed of $S$ is a compact set on $C_e^{0,s}(\mathbb{T})$.

%Therefore, as in \cite[Theorem 5.4]{bruell-dhara}, we are able to globally extend the local bifurcation curve given in \eqref{localcurve}. More precisely, there is a smooth mapping
%\begin{equation}\label{globalcurve}
%	\omega \in \left( \tfrac{\:\:|k|^{2s}}{2}, \infty \right) \longmapsto (\phi_{k, \omega}, \omega_{k,\omega}) \in C_e^{0,s}(\mathbb{T}) \times (0,+\infty)
%\end{equation}
%for each $k \in \mathbb{N}$ of solutions for the equation $F(g,\omega)=0$.

\begin{proposition}
	
	Let $s \in (0,1]$ be fixed. There exists $a_0 > 0$ such that for all $a \in (0,a_0)$ there is a unique even local periodic solution $\varphi$ for the problem \eqref{EDO1}. The small amplitude periodic waves are given by the following expansion:
	\begin{equation}\label{varphi-stokes2}
		\varphi(x) = \sqrt{\omega} + \sqrt{2} \phi(x).
	\end{equation}
	where 
	\begin{equation}\label{varphi-stokes31}
		\phi(x) = a \phi_1(x) + a^2 \phi_2(x) + a^3 \phi_3(x) + \mathcal{O}(a^4),
	\end{equation}
	Here $\phi_1(x) = \cos(x)$,
	$$\phi_2(x) = -\frac{3}{2} + \frac{3}{2(2^{2s} - 1)} \cos(2x),$$
	$$\phi_3(x) = \frac{1}{2(3^{2s} - 1)} \left[ 1 + \frac{9}{2^{2s} - 1}\right] \cos(3x),$$
	and
	$$\gamma = \frac{15}{2} - \frac{9}{2(2^{2s} - 1)}.$$
	The frequency $\omega$ in this case is expressed as	
	
	\begin{equation}\label{varphi-stokes3}
		\omega = \frac{1}{2} + a^2 \gamma + \mathcal{O}(a^4).
	\end{equation}
	For $s\in (\tfrac{1}{4},1]$, the pair $(\varphi, \omega) \in H_{per,e}^{s} \times (\frac{1}{2}, +\infty)$ is global in terms of the parameter $\omega > \frac{1}{2}$ and it satisfies \eqref{EDO1}.
\end{proposition}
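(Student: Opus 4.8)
The plan is to separate the statement into two tasks: first, to derive the explicit third-order expansion \eqref{varphi-stokes2}--\eqref{varphi-stokes3} along the local branch whose existence at the simple bifurcation point $(\sqrt{\tilde{\omega}_1},\tilde{\omega}_1)=(1/\sqrt{2},1/2)$ has already been granted by the Crandall--Rabinowitz theorem applied to $F$ in \eqref{F-lyapunov}; and second, to continue this branch to a family defined for every $\omega\in(\tfrac12,+\infty)$.

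For the expansion I would first recast \eqref{EDO1} as an equation for the deviation from the constant state. Setting $\varphi=\sqrt{\omega}+\sqrt{2}\,\phi$ and using $(-\Delta)^s\sqrt{\omega}=0$, the cubic term expands and \eqref{EDO1} becomes
\begin{equation*}
(-\Delta)^s\phi-2\omega\phi=3\sqrt{2\omega}\,\phi^2+2\phi^3.
\end{equation*}
I would then substitute the Lindstedt--Poincar\'e ansatz \eqref{varphi-stokes31} together with $\omega=\tfrac12+a^2\gamma+\mathcal{O}(a^4)$, use $\sqrt{2\omega}=1+a^2\gamma+\mathcal{O}(a^4)$, and collect equal powers of the amplitude $a$. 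At order $a$ the linear part is $(-\Delta)^s-1$, whose even kernel is spanned by $\tilde{\varphi}_1(x)=\cos x$, which forces $\phi_1=\cos x$. At order $a^2$ one is left with $((-\Delta)^s-1)\phi_2=3\phi_1^2=\tfrac32+\tfrac32\cos(2x)$; since neither the constant mode nor $\cos(2x)$ is resonant, inverting $(-\Delta)^s-1$ mode by mode produces exactly the stated $\phi_2$.

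The decisive order is $a^3$, where the balance reads $((-\Delta)^s-1)\phi_3=2\gamma\phi_1+6\phi_1\phi_2+2\phi_1^3$. Because $\cos x$ lies in the (self-adjoint) kernel of $(-\Delta)^s-1$, the Fredholm alternative demands that the $\cos x$ component of the right-hand side vanish; computing it from $\phi_1^3=\tfrac34\cos x+\tfrac14\cos(3x)$ and the product-to-sum reduction of $\phi_1\phi_2$, this solvability condition is precisely the identity that determines $\gamma$. Removing the resonant mode, the surviving $\cos(3x)$ term is inverted to give $\phi_3$. I expect this order-$a^3$ bookkeeping---tracking which harmonics are generated and applying the orthogonality condition correctly---to be the only delicate point of the expansion, the rest being term-by-term inversion of a Fourier-diagonal operator.

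For the global continuation, the starting observation is that differentiating \eqref{EDO1} gives $\mathcal{L}_1\varphi'=D_gF(\varphi,\omega)\varphi'=0$, yet $\varphi'$ is odd and therefore does not belong to the even space $H_{per,e}^{2s}$; hence the translational kernel direction never obstructs continuation within the even class. Wherever $D_gF(\varphi_\omega,\omega)\colon H_{per,e}^{2s}\to L_{per,e}^2$ is invertible the implicit function theorem extends the branch, and I would invoke the analytic global bifurcation theory of \cite[Chapters 8 and 9]{buffoni-toland}: $F$ is real-analytic and, since $(-\Delta)^s+\omega$ has compact resolvent on $\mathbb{T}$ and multiplication by $3\varphi^2$ is a relatively compact perturbation, $D_gF$ is Fredholm of index zero along the branch. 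The theory then yields a maximal connected continuation that is either unbounded, reconnects to a bifurcation point, or closes into a loop. The hard part will be the analytic a priori estimates: I would rule out the loop and reconnection alternatives and bound $\|\varphi_\omega\|_{H_{per,e}^{2s}}$ on compact $\omega$-intervals---using the structure of \eqref{EDO1} together with the compact embedding $H^s_{per}\hookrightarrow L^4_{per}$ valid precisely for $s\in(\tfrac14,1]$---so that the branch can terminate only as $\omega\to\tfrac12^+$ or $\omega\to+\infty$, giving the global even family over $(\tfrac12,+\infty)$.
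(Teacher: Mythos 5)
Your overall architecture coincides with the paper's: the explicit formulas are obtained from a Stokes--Lindstedt expansion in the amplitude $a$ (which the paper does not carry out in detail, citing instead \cite[Section 5]{NataliLePelinovsky}), and globality in $\omega$ is obtained from the analytic global bifurcation theory of \cite{buffoni-toland}. Your expansion is set up correctly: the reduced equation $(-\Delta)^s\phi-2\omega\phi=3\sqrt{2\omega}\,\phi^2+2\phi^3$, the mode-by-mode inversion at order $a^2$, and the Fredholm solvability condition at order $a^3$ are exactly the right mechanism and reproduce the stated $\phi_2$ and $\phi_3$. Two remarks on this part. First, if you finish the order-$a^3$ computation you will find $2\gamma=\tfrac{15}{2}-\tfrac{9}{2(2^{2s}-1)}$, i.e.\ $\gamma=\tfrac{15}{4}-\tfrac{9}{4(2^{2s}-1)}$: the constant displayed in the proposition is the solvability constant $2\gamma$, not $\gamma$. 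One can confirm your value at $s=1$ from the exact dnoidal solution, whose small-amplitude limit gives $\omega=\tfrac12+3a^2+\mathcal{O}(a^4)$ rather than $\tfrac12+6a^2$; so your method, carried to completion, would catch a factor-of-two slip in the statement. Second, your Fredholm argument genuinely differs from the paper's and is cleaner: you write $D_gF=((-\Delta)^s+\omega)-3g^2$ as invertible plus compact (legitimate for $s>\tfrac14$, since then $H^{2s}_{per,e}\hookrightarrow L^{\infty}$ and the embedding $H^{2s}_{per,e}\hookrightarrow L^2_{per,e}$ is compact), whereas the paper argues through the oscillation theory of \cite{ClaassenJohnson} (at most one even $H^{2s}_{per,e}$-solution of $\mathcal{L}_1\psi=0$) combined with the Fredholm alternative.

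The genuine gap is in the continuation step. The global theorem \cite[Theorem 9.1.1]{buffoni-toland} requires, as a standing hypothesis alongside analyticity and the Fredholm property, that closed bounded subsets of the solution set $S=\{(g,\omega):F(g,\omega)=0\}$ be compact; you invoke the theorem without verifying this, and relegate all compactness issues to unspecified ``analytic a priori estimates'' to be performed afterwards. The paper supplies the missing mechanism with a one-step bootstrap: on $S$ one has $g=\widetilde{F}(g,\omega):=((-\Delta)^s+\omega)^{-1}g^3$; since $H^{2s}_{per,e}$ is a Banach algebra precisely for $s>\tfrac14$, the map $\widetilde{F}$ sends bounded sets of $H^{2s}_{per,e}\times(\tfrac12,+\infty)$ into bounded sets of $H^{4s}_{per,e}$, which are relatively compact in $H^{2s}_{per,e}$. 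Moreover, your plan to ``rule out the loop and reconnection alternatives'' by a priori bounds cannot work as stated: a closed loop is a bounded object, so no norm estimate excludes it. What the paper uses for this purpose is the non-constancy of the frequency $\omega$ along the local branch, read off from \eqref{varphi-stokes3}. With the smoothing bootstrap and this observation inserted, your argument becomes essentially the paper's proof.
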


\begin{proof}
	The first part of the proposition has been already determined in $(\ref{localcurve})$ by considering $k=1$. To get the expression in \eqref{varphi-stokes2}, we use  arguments similar to the ones in \cite[Section 5]{NataliLePelinovsky}. To obtain that the  local curve \eqref{localcurve} extends to a global one for the case $s\in(\tfrac{1}{4},1]$, we first need to prove  that $D_gF(g,\omega)$ given by \eqref{Dg} is a Fredholm operator of index zero. Indeed, we define the set $S = \{(g, \omega) \in D(F) : F(g, \omega) = 0\}$. Let $(g, \omega) \in H_{per,e}^{2s} \times (\tfrac{1}{2},+\infty)$ be a solution of $F(g, \omega) = 0$. For $Y:=L_{per,e}^{2}$  we have that%, we proceed similarly as in \cite[Theorem 5.4]{bruell-dhara}. Let $k \in \mathbb{N}$ be fixed and consider the decomposition $H_{per,e}^{2s} = L \oplus K$ where
\begin{equation}\label{fredalt}
	\mathcal{L}_{1|_{Y}} \psi \equiv D_g F(g, \omega) \psi = \left( (-\Delta)^s - 3g^2\right) \psi + \omega\psi = 0,\end{equation}
has two linearly independent solutions and at most one belongs to $H_{per,e}^{2s}$ (see \cite[Theorem 3.12]{ClaassenJohnson}). If there are no solutions in $H_{per,e}^{2s}\backslash\{0\}$, then the problem  $\left( (-\Delta)^s + \omega - 3 g^2\right) \psi = f$ has a unique non-trivial solution $\psi \in H_{per,e}^{2s}$ for all $f \in Y$ since ${\rm Ker}(\mathcal{L}_{1|_Y})^{\bot} = R(\mathcal{L}_{1|_Y}) = Y$.

On the other hand, if there is a solution $e \in H_{per,e}^{2s}$ we obtain by standard Fredholm Alternative that $(\ref{fredalt})$ has a solution if and only if
$$\int_{-\pi}^\pi e(x) f(x) dx = 0,$$
for all  $f \in Y$. We can conclude in both cases that the Fr\'echet derivative of $F$ in terms of $g$ given by \eqref{Dg} is a Fredholm operator of index zero.

\indent Let us prove that every bounded and closed  $S$ is a compact set on $H_{per,e}^{2s}\times (\tfrac{1}{2},+\infty)$. For  $g\in H_{per,e}^{2s}$ and $\omega>\frac{1}{2}$, we define $\widetilde{F}(g,\omega)=((-\Delta)^s+\omega)^{-1}g^3$. Since $s\in(\tfrac{1}{4},1]$, we see that $\widetilde{F}$ is well defined since $H_{per,e}^{2s}$ is a Banach algebra, $(g,\omega)\in S$ if and only if $\widetilde{F}(g,\omega)=g$ and $\widetilde{F}$ maps $H_{per,e}^{2s}\times (\tfrac{1}{2},+\infty)$ into $H_{per,e}^{4s}$. The compact embedding $H_{per,e}^{4s}\hookrightarrow H_{per,e}^{2s}$ shows that $\widetilde{F}$ maps bounded and closed sets in $H_{per,e}^{2s}\times (\tfrac{1}{2},+\infty)$ into $H_{per,e}^{2s}$. Thus, if $R\subset S\subset H_{per,e}^{2s}\times (\tfrac{1}{2},+\infty)$ is a bounded and closed set, we obtain that $\widetilde{F}(R)$ is relatively compact in $H_{per,e}^{2s}$. Since $R$ is closed, any sequence $\{(\varphi_n,\omega_n)\}_{n\in\mathbb{N}}$ has a convergent subsequence in $R$, so $R$ is compact in $H_{per,e}^{2s}\times (\tfrac{1}{2},+\infty)$.\\
\indent Since the frequency of the wave given by $(\ref{varphi-stokes3})$ is not constant, we can apply \cite[Theorem 9.1.1]{buffoni-toland} to extend globally the local bifurcation curve given in \eqref{localcurve}. More precisely, there is a continuous mapping
\begin{equation}\label{globalcurve}
	\omega \in \left( \tfrac{1}{2}, +\infty \right) \longmapsto \varphi_{\omega} \in H_{per,e}^{2s}
\end{equation}
where $\varphi_{\omega}$ solves equation $(\ref{EDO1})$.
\end{proof}
\begin{remark}\label{remstokes} It is important to note that $\varphi \in H^{2s}_{per,e}$ given by \eqref{varphi-stokes2} is a solution of the minimization problem \eqref{minimizationproblem} by using similar arguments  as in \cite[Lemma 2.3]{NataliLePelinovsky}. \end{remark}

\section{Spectral analysis and uniqueness of minimizers}\label{spectralanalysis-section}

\subsection{Spectral analysis} We are going to use the variational characterization determined in the last section to obtain useful spectral properties for the linearized operator $\mathcal{L}$ in $(\ref{matrixop})$ around the periodic wave $\varphi$ obtained by Theorem $\ref{theorem1even}$.\\
\indent Let $s \in \left(\tfrac{1}{4},1\right]$ and $\omega>0$ be fixed. Consider $\varphi \in H^{\infty}_{per,e}$ the periodic minimizer obtained by Theorem \ref{theorem1even}. We study the spectral properties of the matrix operator
$$
\mathcal{L}=\left(\begin{array}{cccc}\mathcal{L}_1 & 0\\
	0 & \mathcal{L}_2\end{array}\right):H^{2s}_{per} \times H^{2s}_{per} \subset L^2_{per} \times L^2_{per} \rightarrow L^2_{per} \times L^2_{per},
$$
where $\mathcal{L}_1, \mathcal{L}_2$ are defined by
\begin{equation}\label{L1eL2}
	\mathcal{L}_1=(-\Delta)^s+\omega-3\varphi^2
	\qquad \text{and} \qquad
	\mathcal{L}_2=(-\Delta)^s+\omega-\varphi^2.
\end{equation}

The operators $\mathcal{L}_1$ and $\mathcal{L}_2$ are the real and imaginary parts of the operator
$\mathcal{L}$. An important fact is that by \eqref{positivity}, we have
$$
G(u) \leq \mathcal{B}_\omega (u),
$$
Next, since $\varphi$ is a minimizer of $\mathcal{B}_\omega$, we conclude that $\varphi$ also is a minimizer of $G$ in \eqref{G}. By \cite[Theorem 30.2.2]{BlanchardBruning}, we infer
$$
\mathcal{L}_{|_{ \{\varphi^3  \}^{\perp}}} \geq 0,
$$
and by Min-Max Principle in \cite[Theorem XIII.2]{ReedSimon} we obtain
$
{\rm n}(\mathcal{L}) \leq 1.
$
Since $\mathcal{L}_1\varphi=-2\varphi^3$ and $(\mathcal{L}_1\varphi,\varphi)_{L^2}=-2\int_{-\pi}^{\pi}\varphi^4dx<0$, we deduce by the Min-Max Principle that ${\rm n}(\mathcal{L}_1) \geq 1$. Since the operator $\mathcal{L}$ in $(\ref{matrixop})$ is diagonal, we also obtain ${\rm n}(\mathcal{L}_1)={\rm n}(\mathcal{L})=1$, so that ${\rm n}(\mathcal{L}_2)=0$. Next, we see that $\mathcal{L}_2\varphi=0$ with ${\rm n}(\mathcal{L}_2)=0$. It follows by oscillation theorem in \cite{HurJohnsonMartin} that $\varphi>0$ and the zero eigenvalue for $\mathcal{L}_2$ results to be simple. In fact, we have proved the following result.

\begin{lemma}\label{simpleKernel2even}
Let $s \in \left( \tfrac{1}{4},1 \right]$ and $\omega>\frac{1}{2}$ be fixed. If $\varphi \in H^\infty_{per,e}$ is the periodic minimizer given by Theorem \ref{theorem1even}, then  ${\rm n}(\mathcal{L}_2)=0$ and  ${\rm z}(\mathcal{L}_2)=1$.
\end{lemma}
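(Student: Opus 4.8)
The plan is to assemble the spectral count for $\mathcal{L}_2$ from the variational characterization of $\varphi$ together with the diagonal structure of $\mathcal{L}$ and the oscillation theorem of \cite{HurJohnsonMartin}. First I would record that, because $\varphi$ minimizes $\mathcal{B}_\omega$ subject to the constraint $\|u\|_{L^4_{per}}^4 = \tau$, it is also a constrained minimizer of the Lyapunov functional $G$ in \eqref{G} (this uses the inequality $G(u) \leq \mathcal{B}_\omega(u)$ from \eqref{positivity} together with equality at $u=\varphi$). Invoking \cite[Theorem 30.2.2]{BlanchardBruning}, the restriction of $\mathcal{L}$ to the subspace orthogonal to the constraint gradient $\varphi^3$ is nonnegative, so the Min-Max Principle \cite[Theorem XIII.2]{ReedSimon} gives the upper bound ${\rm n}(\mathcal{L}) \leq 1$.

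Next I would pin down the lower bound and distribute it across the two diagonal blocks. Since $\varphi$ solves \eqref{EDO1}, one has $\mathcal{L}_1 \varphi = -2\varphi^3$, whence $(\mathcal{L}_1\varphi, \varphi)_{L^2_{per}} = -2\int_{-\pi}^{\pi} \varphi^4\, dx < 0$; the Min-Max Principle then forces ${\rm n}(\mathcal{L}_1) \geq 1$. Because $\mathcal{L}$ is block-diagonal, its negative eigenvalues are precisely those of $\mathcal{L}_1$ and $\mathcal{L}_2$ counted together, so combining ${\rm n}(\mathcal{L}) \leq 1$ with ${\rm n}(\mathcal{L}_1) \geq 1$ yields ${\rm n}(\mathcal{L}_1) = {\rm n}(\mathcal{L}) = 1$ and hence ${\rm n}(\mathcal{L}_2) = 0$. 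This is the first assertion of the lemma.

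It remains to show ${\rm z}(\mathcal{L}_2) = 1$. The key observation is that $\varphi$ itself is a null vector: directly from \eqref{EDO1} we get $\mathcal{L}_2 \varphi = \big((-\Delta)^s + \omega - \varphi^2\big)\varphi = 0$, so $0$ is an eigenvalue of $\mathcal{L}_2$ with eigenfunction $\varphi$. Since we have just established ${\rm n}(\mathcal{L}_2) = 0$, the operator $\mathcal{L}_2$ has no negative spectrum, so $0$ must be its \emph{lowest} eigenvalue. Here I would invoke the oscillation theorem for fractional Hill operators from \cite{HurJohnsonMartin}: the lowest eigenvalue of such an operator is simple and its eigenfunction has no sign change. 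The main obstacle — and the step requiring the most care — is exactly this application, since it simultaneously delivers two conclusions: on one hand the eigenfunction associated with the bottom of the spectrum cannot vanish, which combined with $\mathcal{L}_2\varphi=0$ forces $\varphi$ to be of one sign (and we normalize $\varphi > 0$); on the other hand the bottom eigenvalue is simple, giving ${\rm z}(\mathcal{L}_2) = 1$ with $\Ker(\mathcal{L}_2) = [\varphi]$. I would close by noting that the positivity $\varphi > 0$ obtained here is consistent with the Krein-Rutman argument and will be reused later in analyzing $\mathcal{L}_1$, thereby completing the proof.
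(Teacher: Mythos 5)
Your proposal follows the paper's own proof essentially step for step: the same passage from the constrained minimization of $\mathcal{B}_\omega$ to nonnegativity of $\mathcal{L}$ on $\{\varphi^3\}^\perp$ via \cite[Theorem 30.2.2]{BlanchardBruning} and the Min-Max principle of \cite{ReedSimon} giving ${\rm n}(\mathcal{L})\leq 1$, the same use of $(\mathcal{L}_1\varphi,\varphi)_{L^2_{per}}=-2\int_{-\pi}^{\pi}\varphi^4\,dx<0$ plus the diagonal structure to get ${\rm n}(\mathcal{L}_1)={\rm n}(\mathcal{L})=1$ and ${\rm n}(\mathcal{L}_2)=0$, and the same appeal to the oscillation theorem of \cite{HurJohnsonMartin} applied to $\mathcal{L}_2\varphi=0$ to conclude that $0$ is the simple ground-state eigenvalue with positive eigenfunction, hence ${\rm z}(\mathcal{L}_2)=1$ and $\varphi>0$. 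The only quibble is your parenthetical justification that $\varphi$ minimizes $G$ ``using $G(u)\leq\mathcal{B}_\omega(u)$ together with equality at $u=\varphi$'': the clean reason is that on the constraint set $\mathcal{Y}_\tau$ the two functionals differ by the constant $\tau/4$, but this looseness mirrors the paper's own wording and does not affect correctness.
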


Concerning the operator $\mathcal{L}_1$ in \eqref{L1eL2}, we have the following lemma.

\begin{lemma}\label{simpleKerneleven}
Let $s \in \left( \tfrac{1}{4},1 \right]$ and $\omega>\frac{1}{2}$ be fixed. If $\varphi \in H^\infty_{per,e}$ is the periodic minimizer given by Theorem \ref{theorem1even}, then ${\rm Ker}(\mathcal{L}_1)=[\varphi']$.
\end{lemma}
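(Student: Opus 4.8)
The plan is to first exhibit one kernel element and then use spectral-ordering arguments to show there are no others. Differentiating the profile equation \eqref{EDO1} with respect to $x$ gives $(-\Delta)^s\varphi' + \omega\varphi' - 3\varphi^2\varphi' = 0$, that is $\mathcal{L}_1\varphi' = 0$; hence $[\varphi'] \subseteq \Ker(\mathcal{L}_1)$ and ${\rm z}(\mathcal{L}_1) \geq 1$. Since $\varphi$ is even with a single maximum at $x=0$ and single minimum at $x=\pm\pi$ (Definition \ref{single-lobe} and Proposition \ref{theorem1even}), the derivative $\varphi'$ is odd, strictly positive on $(-\pi,0)$ and strictly negative on $(0,\pi)$; thus $\varphi'$ vanishes exactly at $x=0$ and $x=\pi$, so it has precisely two zeros on the torus.

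Next I would locate the eigenvalue $0$ in the spectrum of $\mathcal{L}_1$. Recall that ${\rm n}(\mathcal{L}_1)=1$ was already established, so ordering the periodic eigenvalues as $\lambda_0<\lambda_1\leq\lambda_2<\cdots$ we have $\lambda_0<0\leq\lambda_1$. By the oscillation theorem for fractional Hill operators \cite{HurJohnsonMartin}, the ground state $\lambda_0$ is simple with a nonvanishing eigenfunction, while every eigenfunction associated with the band $\{\lambda_1,\lambda_2\}$ has exactly two zeros on $\mathbb{T}$. Because $\varphi'$ lies in $\Ker(\mathcal{L}_1)$ and has exactly two zeros, the value $0$ must be $\lambda_1$ or $\lambda_2$; combining $0\in\{\lambda_1,\lambda_2\}$ with $\lambda_0<0\leq\lambda_1$ forces $\lambda_1=0$. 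It remains to prove $\lambda_2>0$, i.e. that $0$ is a simple eigenvalue, which is the heart of the statement.

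For the simplicity I would pass to the positive compact operator $T:=((-\Delta)^s+\omega)^{-1}(3\varphi^2\,\cdot\,)$ on $L^2_{per}$, which is positivity-improving since $(-\Delta)^s+\omega$ has a strictly positive Green's function on $\mathbb{T}$. One checks $\mathcal{L}_1\psi=0$ if and only if $T\psi=\psi$, so $\Ker(\mathcal{L}_1)$ is exactly the eigenspace of $T$ at the value $1$; moreover, since $((-\Delta)^s+\omega)\varphi=\varphi^3$, we get $T\varphi=3\varphi$ with $\varphi>0$. By the Krein--Rutman theorem, $3$ is the simple spectral radius of $T$ and the unique eigenvalue carrying a positive eigenfunction, so every eigenfunction at the value $1<3$ changes sign. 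This is precisely the point where the positivity of the single-lobe $\varphi$ replaces the range condition $1\in {\rm R}(\mathcal{L}_1)$ used in \cite{HurJohnson}, \cite{NataliLePelinovsky} and \cite{NataliLePelinovskyKDV}. Invoking the theory of positive operators as in \cite{Albert} for $T$, together with the single-lobe structure of the even potential $3\varphi^2$ and the two-zero count supplied by the oscillation theorem, I would conclude that the eigenvalue $1$ of $T$ is simple, whence ${\rm z}(\mathcal{L}_1)=1$ and $\Ker(\mathcal{L}_1)=[\varphi']$.

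I should flag the main obstacle: the nonlocality of $(-\Delta)^s$ prevents the classical Sturm--Liouville/Wronskian argument for simplicity, so the delicate step is excluding a second, necessarily even and sign-changing, kernel element. The plan handles this by pairing the positivity of $\varphi$ (which fixes the Perron eigenfunction of $T$ and forces every other eigenfunction to oscillate) with the sharp zero-count of \cite{HurJohnsonMartin}; verifying that Albert's positive-operator framework applies to $T$ with the required strict inequalities is the part that needs the most care.
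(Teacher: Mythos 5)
Your proposal sets the stage correctly: $\mathcal{L}_1\varphi'=0$, the identification $\lambda_1=0$ from ${\rm n}(\mathcal{L}_1)=1$, the equivalence between $\Ker(\mathcal{L}_1)$ and the fixed points of $T_0=((-\Delta)^s+\omega)^{-1}(3\varphi^2\,\cdot)$, and the computation $T_0\varphi=3\varphi$ are all right. But the decisive step --- simplicity of the eigenvalue $1$ of $T_0$, i.e.\ ${\rm z}(\mathcal{L}_1)=1$ --- is asserted rather than proved, and the tools you cite cannot deliver it. Krein--Rutman controls only the spectral radius: it says $3$ is simple with $\varphi$ the unique positive eigenfunction, hence that every kernel element of $\mathcal{L}_1$ changes sign; but $\varphi'$ and any hypothetical second kernel element $h$ both change sign, so no contradiction arises. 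Likewise, the Hur--Johnson--Martin oscillation estimate bounds the number of sign changes of eigenfunctions in the band $\{\lambda_1,\lambda_2\}$ by two, a bound that $\varphi'$ and $h$ can satisfy simultaneously. Finally, invoking ``Albert's framework'' to obtain simplicity of an eigenvalue strictly \emph{below} the spectral radius would require total positivity (a PF-type property) of the kernel of $T_0$, which is neither verified nor easily verifiable here; indeed, avoiding hypotheses of exactly this kind (equivalent to assuming $1\in{\rm R}(\mathcal{L}_1)$, as in \cite{HurJohnson}, \cite{NataliLePelinovsky}, \cite{NataliLePelinovskyKDV}) is one of the stated points of the paper.

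What the paper actually does at this juncture is different, and it is the content your outline is missing. The Albert/Angulo--Natali positive-operator machinery (Appendix A) is used not for any oscillation count but to construct, via an eigenfunction expansion of $\widehat{\varphi}/w_0$ in the weighted sequence space $X$, an explicit $\psi_1\in H^{2s}_{per}$ with $\mathcal{L}_1\psi_1=\varphi$; this establishes the key fact $\varphi\in{\rm R}(\mathcal{L}_1)=\Ker(\mathcal{L}_1)^{\perp}$ (the easy identity $\mathcal{L}_1\varphi=-2\varphi^3$ already gives $\varphi^3\in{\rm R}(\mathcal{L}_1)$). With both orthogonality relations in hand, a second kernel element is excluded by a sign argument: if ${\rm z}(\mathcal{L}_1)=2$, the oscillation theorem produces an even $h\in\Ker(\mathcal{L}_1)$ with exactly two symmetric zeros $\pm x_0$ and a definite sign pattern, while the test function $\varphi(\varphi^2-\varphi^2(x_0))$ --- a linear combination of $\varphi^3$ and $\varphi$, hence orthogonal to $h$ --- has the \emph{same} sign pattern precisely because $\varphi$ is positive and single-lobe; this forces $\bigl(h,\varphi(\varphi^2-\varphi^2(x_0))\bigr)_{L^2_{per}}\neq 0$, a contradiction. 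Your proposal contains neither the range computation $\varphi\in{\rm R}(\mathcal{L}_1)$ nor any substitute for it, so the difficulty you flagged at the end is not a matter of ``care'' in applying positive-operator theory: it is a genuine gap, and the argument as outlined does not close it.
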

\begin{proof}
First, we see that $\varphi' \in {\rm Ker}(\mathcal{L}_1)={\rm R}(\mathcal{L}_1)^{\perp}$. In addition, since $\mathcal{L}_1\, \varphi = -2\varphi^3 $, we obtain
$
\varphi^3 \in {\rm R}(\mathcal{L}_1).
$
Next,  we claim that $\varphi \in  {\rm R}(\mathcal{L}_1)$ and to do so, we follow the notations contained in the Appendix A. Indeed, we start by defining
$
e_i:= \sqrt{|\lambda_i(0)|}\,\widehat{\xi_i}, \; i \in \mathbb{N}.
$

From Lemma \ref{prop314-natali}, we have that $(e_i)_{i \in \mathbb{N}} \subset X$ forms an orthonormal complete system in $X$, composed by eigenfunctions of $S_0: X \rightarrow X$. Also, we define
\begin{equation}\label{eta1}
\eta_1:= \sum_{i \in \mathbb{N}\setminus N} \left( \frac{1}{1-\lambda_i(0)}\right) \left( \frac{\widehat{\varphi}}{w_0}, e_i\right)_{X,0}e_i,
\end{equation}
where $N:=\{ i \in \mathbb{N} \; ; \; \lambda_i(0)=1\}$.  By \cite[Corollary 3.1]{AnguloNatali2008}, we note that $N \subset \mathbb{N}$ is finite, since ${\rm z}(\mathcal{L}_1) \in \{1,2\}$. Thus, we can write
\begin{eqnarray}\label{est1}
\sum_{i \in \mathbb{N}\setminus N} \left( \frac{1}{1-\lambda_i(0)}\right)^2 \bigg| \left( \frac{\widehat{\varphi}}{w_0}, e_i\right)_{X,0} \bigg|^2  \leq  A\, \sum_{i \in \mathbb{N}\setminus N} \bigg| \left( \frac{\widehat{\varphi}}{w_0}, e_i\right)_{X,0}\bigg|^2  \leq  A\, \bigg\| \frac{\widehat{\varphi}}{w_0}\bigg\|^2_{X,0},
\end{eqnarray}
where
$
A:=\sup_{i \in \mathbb{N}\setminus N} \bigg|\frac{1}{1-\lambda_i(0)}\bigg|^2 \in [0,\infty).
$
However, by Parseval Identity, we see that
$$
\bigg\| \frac{\widehat{\varphi}}{w_0}\bigg\|^2_{X,0}= \sum_{n \in \mathbb{N}}\bigg| \frac{\widehat{\varphi}(n)}{w_0(n)}\bigg|^2 w_0^2(n)= \sum_{n \in \mathbb{N}} | \widehat{\varphi}(n)|^2=\|\varphi\|^2_{L^2},
$$
which implies by $(\ref{est1})$
$$
\sum_{i \in \mathbb{N}\setminus N} \left( \frac{1}{1-\lambda_i(0)}\right)^2 \bigg| \left( \frac{\widehat{\varphi}}{w_0}, e_i\right)_{X,0} \bigg|^2  \leq A \|\varphi \|^2_{L^2}< \infty.
$$
Hence, the series for $\eta_1$ in $(\ref{eta1})$ converges in $X$, so that $\eta_1 \in X$. For $\theta=0$, we see that $\mathcal{F}:Y\longrightarrow X$ defined as $\mathcal{F}(f)=\widehat{f}$ is an isomorphism. Here, $\mathcal{F}$ denotes the Fourier transform defined in ${L}_{per}^2$. Thus, one can take $\psi_1 \in Y \hookrightarrow {L}^2_{per}$ such that $\widehat{\psi}_1=\eta_1$. Moreover, by the definition of $X$ for $\theta=0$ and the fact that $s\in\left(\frac{1}{4},1\right]$, it follows that
$$
\|\psi_1\|_{H^{2s}_{per}}^2= 2\pi \sum_{n \in \mathbb{Z}} \big(1+|n|^2\big)^{2s} |\widehat{\psi_1}(n)|^2=2\pi \sum_{n \in \mathbb{Z}} \big(1+|n|^2\big)^{2s} |\eta_1 (n)|^2< \infty,
$$
that is, $\psi_1 \in H^{2s}_{per}=D(\mathcal{L}_1)$.  Thus, we have
\begin{equation}\label{fourier0}\begin{array}{llllllll}
\widehat{(\mathcal{L}_1\, \psi_1)}&=&\left( (-\Delta)^s\psi_1+\omega \psi_1-3\varphi^2 \psi_1  \right)\widehat{}  \\
&= & \widehat{(-\Delta)^s\psi_1}+\omega\widehat{\psi_1}-\widehat{3\varphi^2 \psi_1 }  \\
&=& \beta \widehat{\psi_1}+\omega \widehat{\psi_1}-\widehat{(3\varphi^2\psi_1)} =(\beta+\omega)\widehat{\psi_1}-\widehat{(3\varphi^2\psi_1)} r \\
& = &  (\beta+\omega)\widehat{\psi_1}-w_0 \big(\big((-\Delta)^s+\omega \big)^{-1}\, 3\varphi^2 \psi_1 \big)\,\widehat{}   \\
& =& w_0 \widehat{\psi_1} - w_0 ( T_0 (\psi_1) )\,\widehat{}= w_0  \eta_1 - w_0 ( T_0 (\widecheck{\eta}_1) )\,\widehat{}.
\end{array}\end{equation}

On the other hand,
\begin{equation}\label{fourier1} \begin{array}{lllll}
 T_0 (\widecheck{\eta}_1) & = & \displaystyle T_0 \left(  \sum_{i \in \mathbb{N}\setminus N} \left( \frac{1}{1-\lambda_i(0)}\right) \left( \frac{\widehat{\varphi}}{w_0}, e_i\right)_{X,0} \widecheck{e_i} \right)  \\
& = & \displaystyle T_0 \left(  \sum_{i \in \mathbb{N}\setminus N} \left( \frac{1}{1-\lambda_i(0)}\right) \left( \frac{\widehat{\varphi}}{w_0}, e_i\right)_{X,0} \sqrt{|\lambda_i(0)|}\xi_i \right)   \\
& = & \displaystyle  \sum_{i \in \mathbb{N}\setminus N} \left( \frac{1}{1-\lambda_i(0)}\right) \left( \frac{\widehat{\varphi}}{w_0}, e_i\right)_{X,0} \sqrt{|\lambda_i(0)|} \; T_0 (\xi_i).
\end{array}\end{equation}
\indent A straightforward calculation using the relation $e_i= \sqrt{|\lambda_i(0)|}\,\widehat{\xi_i}$ also gives
\begin{eqnarray}\label{fourier2}
S_0( e_i )= \lambda_i(0) e_i,
\end{eqnarray}
for all $i \in \mathbb{N}$.

%\begin{equation}\label{fourier2}
%S_0( e_i )= S_0 \big( \sqrt{|\lambda_i(0)|} \xi_i\big) = \sqrt{|\lambda_i(0)|} S_0 ( \xi_i) = \sqrt{|\lambda_i(0)|} \,   \lambda_i(0)\, \xi_i =  \lambda_i(0) e_i,\quad i \in \mathbb{N}
%\end{equation}

By \eqref{eigenvalueT}, \eqref{fourier1} and \eqref{fourier2}, we infer
\begin{equation}\label{fourier3}\begin{array}{llllllll}
\big( T_0 (\widecheck{\eta}_1) \big)\,\widehat{} & =  & \displaystyle \sum_{i \in \mathbb{N}\setminus N} \left( \frac{1}{1-\lambda_i(0)}\right) \left( \frac{\widehat{\varphi}}{w_0}, e_i\right)_{X,0} \sqrt{|\lambda_i(0)|} \; \big(T_0 (\xi_i)\big)\, \widehat{}  \\
& =  & \displaystyle \sum_{i \in \mathbb{N}\setminus N} \left( \frac{1}{1-\lambda_i(0)}\right) \left( \frac{\widehat{\varphi}}{w_0}, e_i\right)_{X,0} \sqrt{|\lambda_i(0)|} \; \lambda_i(0) \widehat{\xi_i}  \\
& =  &  \displaystyle\sum_{i \in \mathbb{N}\setminus N} \left( \frac{1}{1-\lambda_i(0)}\right) \left( \frac{\widehat{\varphi}}{w_0}, e_i\right)_{X,0} \sqrt{|\lambda_i(0)|} \; S_0(\widehat{\xi_i})   \\
& =  & \displaystyle \sum_{i \in \mathbb{N}\setminus N} \left( \frac{1}{1-\lambda_i(0)}\right) \left( \frac{\widehat{\varphi}}{w_0}, e_i\right)_{X,0}  S_0( e_i )   \\
& =  &  \displaystyle\sum_{i \in \mathbb{N}\setminus N} \left( \frac{1}{1-\lambda_i(0)}\right) \left( \frac{\widehat{\varphi}}{w_0}, e_i\right)_{X,0}   \lambda_i(0)\, e_i.
\end{array}\end{equation}

\indent Now, we need to adopt the notational convention $0=\tfrac{0}{0}$ to obtain by \eqref{eta1}, \eqref{fourier0} and \eqref{fourier3} that
\begin{equation}\label{fourier4}\begin{array}{llllll}
\widehat{(\mathcal{L}_1\psi_1)}&=& \displaystyle w_0 \sum_{i \in \mathbb{N}\setminus N} \left( \frac{1}{1-\lambda_i(0)}\right) \left( \frac{\widehat{\varphi}}{w_0}, e_i\right)_{X,0} \Big(1-\lambda_i(0)\Big)e_i \\
&=&\displaystyle w_0 \sum_{i \in \mathbb{N}}  \left( \frac{\widehat{\varphi}}{w_0}, e_i\right)_{X,0} e_i=\frac{w_0}{w_0} \widehat{\varphi} =\widehat{\varphi}.\end{array}
\end{equation}

Applying the inverse Fourier transform in \eqref{fourier4}, it follows that $\mathcal{L}_1 \psi_1=\varphi$. In other words,
$
\varphi \in {\rm R}(\mathcal{L}_1),
$
as claimed.\\
\indent Thus, we obtain that $\varphi, \varphi^3 \in {\rm R}(\mathcal{L}_1)={\rm Ker}(\mathcal{L}_1)^{\perp}$ with $\varphi$ being an even, smooth, positive, and single-lobe solution for $(\ref{EDO1})$. Let us assume that ${\rm z}(\mathcal{L}_1)=2$. Since $\varphi' \in {\rm Ker}(\mathcal{L}_1)$ is odd, there exists an even periodic function $h \in {\rm Ker}(\mathcal{L}_1)$ such that $h$ has exactly two symmetric zeros  in the interval $\left[-\pi,\pi\right)$ (see oscillation theorem in \cite{HurJohnsonMartin}). Hence, there exist $x_0 \in \left(-\pi,\pi\right)$ such that $h(\pm x_0)=0$. Without loss of generality, we can still suppose that
\begin{equation}\label{positivityh}
h(x)>0, \; x \in (-x_0, x_0) \quad \text{and} \quad  h(x)<0, \; x \in\left [-\pi,x_0 \right) \cup \left(x_0,\pi\right).
\end{equation}

Furthermore, since $h\in \Ker(\mathcal{L}_1)$ and $\varphi,\varphi^3\in\Ker(\mathcal{L}_1)^{\bot}$ we have
\begin{equation}\label{orthogonalityL1}
(h,\varphi^3)_{L^2}=0 \quad \text{and} \quad (h,\varphi)_{L^2}=0.
\end{equation}

Since $\varphi>0$,  we obtain by the fact that $\varphi$ is a single-lobe that
$
\varphi(x)(\varphi(x)^2-\varphi^2(x_0))
$
is positive over $(-x_0,x_0)$ and negative over $\left [-\pi,x_0 \right) \cup \left(x_0,\pi\right)$, so that it has the same behaviour as $h$ in \eqref{positivityh}. Thus, $(\varphi(\varphi^2-\varphi^2(x_0)), h)_{L^2} \neq 0$ which leads a contradiction with \eqref{orthogonalityL1}. Consequently, we have ${\rm Ker}(\mathcal{L}_1)=[\varphi']$.
\end{proof}

\begin{remark}
	Arguments established in the end of the proof of Lemma $\ref{simpleKernel2even}$ are valid only if $\varphi>0$. If $\varphi$ changes its sign over $\mathbb{T}$, an alternative form to prove that $\Ker(\mathcal{L}_1)=[\varphi']$ can be determined by proving that $1\in {\rm R}(\mathcal{L}_1)$. In the affirmative case and since $\mathcal{L}_11=\omega-3\varphi^2$, we obtain that the property $\{1,\varphi,\varphi^2\}\subset {\rm R}(\mathcal{L}_1)$ occurs. Employing the arguments in \cite[Proposition 2.5]{NataliLePelinovskyKDV}, we obtain that ${\rm Ker}(\mathcal{L}_1)=[\varphi']$ as requested.
\end{remark}

As a consequence of Lemma \ref{simpleKerneleven}, we obtain the existence of a smooth curve of positive and periodic solutions $\varphi_{\omega}$ depending on the wave frequency $\omega>0$ all of the with the same period $2\pi$.

\begin{proposition}\label{smoothcurveeven}
Let $s \in \left(\tfrac{1}{4},1\right]$ and $\varphi_0 \in H^{\infty}_{per}$ be the solution obtained in the Proposition \ref{theorem1even} which is associated to the fixed value $\omega_0>\frac{1}{2}$. Then, there exists a $C^1$ mapping
$$
\omega \in \mathcal{I}_{\omega_0}  \longmapsto \varphi_\omega \in H^s_{per,e}
$$
defined in an open neighbourhood $\mathcal{I}_{\omega_0} \subset (0,+\infty)$ of $\omega_0>0$ such that $\varphi_{\omega_0}=\varphi_0$.
\end{proposition}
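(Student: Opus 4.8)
The plan is to apply the implicit function theorem to the map $F$ from \eqref{F-lyapunov}, now regarded as acting on the \emph{even} sector, where the degeneracy detected in Lemma \ref{simpleKerneleven} disappears. I would consider
\[
F : H^{2s}_{per,e} \times \left( \tfrac{1}{2}, +\infty \right) \longrightarrow L^2_{per,e}, \qquad F(g, \omega) = (-\Delta)^s g + \omega g - g^3,
\]
and first record the routine facts: since $s \in \left( \tfrac{1}{4}, 1 \right]$ makes $H^{2s}_{per}$ a Banach algebra, the cubic term $g \mapsto g^3$ is smooth as a map $H^{2s}_{per,e} \to H^{2s}_{per,e} \hookrightarrow L^2_{per,e}$, so that $F$ is of class $C^\infty$; moreover $(-\Delta)^s$ preserves parity and $g^2$ is even whenever $g$ is, so $F$ does map the even sector into $L^2_{per,e}$. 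By hypothesis $F(\varphi_0, \omega_0) = 0$, and by \eqref{Dg} the Fr\'echet derivative in the first variable at this point is $D_g F(\varphi_0, \omega_0) = (-\Delta)^s + \omega_0 - 3\varphi_0^2 = \mathcal{L}_1$.

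The central step will be to show that $\mathcal{L}_1$, restricted to the even sector, is an isomorphism from $H^{2s}_{per,e}$ onto $L^2_{per,e}$. Here I would invoke Lemma \ref{simpleKerneleven}, which gives $\Ker(\mathcal{L}_1) = [\varphi_0']$ in $L^2_{per}$: since $\varphi_0$ is even, $\varphi_0'$ is odd and hence lies outside $L^2_{per,e}$, so the kernel of $\mathcal{L}_1$ over the even sector is trivial. Because $\mathcal{L}_1$ is self-adjoint with compact resolvent (the embedding $H^{2s}_{per} \hookrightarrow L^2_{per}$ being compact), it is Fredholm of index zero on the invariant subspace $L^2_{per,e}$; a self-adjoint index-zero operator with trivial kernel is surjective, hence bijective, and the bounded inverse theorem makes it an isomorphism.

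With invertibility in hand, the implicit function theorem (see, e.g., \cite{buffoni-toland}) would then furnish an open neighbourhood $\mathcal{I}_{\omega_0} \subset (0, +\infty)$ of $\omega_0$ and a unique $C^1$ (indeed $C^\infty$) map $\omega \in \mathcal{I}_{\omega_0} \mapsto \varphi_\omega \in H^{2s}_{per,e}$ with $F(\varphi_\omega, \omega) = 0$ and $\varphi_{\omega_0} = \varphi_0$; composing with the embedding $H^{2s}_{per,e} \hookrightarrow H^s_{per,e}$ yields the asserted curve. To finish I would note that positivity persists: as $\varphi_0 > 0$ is bounded below by a positive constant on the compact torus and $\omega \mapsto \varphi_\omega$ is continuous into $H^{2s}_{per,e} \hookrightarrow C(\mathbb{T})$, shrinking $\mathcal{I}_{\omega_0}$ if necessary keeps $\varphi_\omega > 0$. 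The main obstacle is precisely the invertibility of $\mathcal{L}_1$: on the full space $L^2_{per}$ it fails because of the translational mode $\varphi_0'$, and it is the restriction to even functions that removes this mode and makes the theorem applicable.
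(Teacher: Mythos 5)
Your proposal is correct and follows essentially the same route as the paper: the paper's proof is a one-line appeal to the implicit function theorem (citing \cite[Theorem 3.2]{Cristofani}), and the decisive point you spell out --- that Lemma \ref{simpleKerneleven} gives $\Ker(\mathcal{L}_1)=[\varphi_0']$, whose oddness makes $\mathcal{L}_1$ an isomorphism from $H^{2s}_{per,e}$ onto $L^2_{per,e}$ --- is precisely the ingredient that argument rests on. Your additional details (Banach-algebra smoothness of the cubic term, the self-adjoint/compact-resolvent argument for invertibility on the even sector, and persistence of positivity) are a correct fleshing-out of that terse proof.
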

\begin{proof}
The proof follows from the implicit function theorem and it is similar to \cite[Theorem 3.2]{Cristofani}.
\end{proof}

\begin{remark}
We cannot guarantee that for each $\omega \in \mathcal{I}_{\omega_0}$ given by Proposition \ref{smoothcurveeven} that $\varphi_{\omega}$ solves the minimization problem \eqref{minimizationproblem} except at $\omega=\omega_0$.
\end{remark}

\indent The results determined in this subsection can be summarized in the following proposition:

\begin{proposition}\label{propspec}
	Let $\varphi$ be the single-lobe profile obtained in Proposition $\ref{theorem1even}$. We have that $n(\mathcal{L})=1$ and $\Ker(\mathcal{L})=[(\varphi',0),(0,\varphi)]$.
\end{proposition}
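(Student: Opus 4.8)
The plan is to read off both conclusions directly from the block-diagonal structure of $\mathcal{L}$, using the spectral information about $\mathcal{L}_1$ and $\mathcal{L}_2$ that has already been collected. Since $\mathcal{L}$ acts as $\mathcal{L}(f,g) = (\mathcal{L}_1 f, \mathcal{L}_2 g)$ on $L^2_{per}\times L^2_{per}$, its eigenvalue problem decouples completely into the scalar eigenvalue problems for $\mathcal{L}_1$ and $\mathcal{L}_2$. Concretely, $(f,g)$ is an eigenvector of $\mathcal{L}$ with eigenvalue $\lambda$ if and only if $\mathcal{L}_1 f = \lambda f$ and $\mathcal{L}_2 g = \lambda g$; writing $(f,g) = (f,0) + (0,g)$ then shows that every eigenspace of $\mathcal{L}$ is spanned by vectors of the pure forms $(f,0)$ and $(0,g)$. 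I would record this decoupling as the first step, since it is the only structural fact the argument requires.

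From the decoupling, the negative eigenvalue count is additive: the negative part of the spectrum of $\mathcal{L}$ is the union, with multiplicity, of the negative spectra of $\mathcal{L}_1$ (embedded as $(f,0)$) and of $\mathcal{L}_2$ (embedded as $(0,g)$), so that ${\rm n}(\mathcal{L}) = {\rm n}(\mathcal{L}_1) + {\rm n}(\mathcal{L}_2)$. I would then invoke the facts already established: the Min--Max argument preceding Lemma \ref{simpleKernel2even} gives ${\rm n}(\mathcal{L}_1)=1$, while Lemma \ref{simpleKernel2even} gives ${\rm n}(\mathcal{L}_2)=0$. Therefore ${\rm n}(\mathcal{L}) = 1 + 0 = 1$.

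For the kernel, the same decoupling yields $\Ker(\mathcal{L}) = \bigl(\Ker(\mathcal{L}_1)\times\{0\}\bigr) \oplus \bigl(\{0\}\times\Ker(\mathcal{L}_2)\bigr)$. Here I would use Lemma \ref{simpleKerneleven}, which identifies $\Ker(\mathcal{L}_1) = [\varphi']$, together with Lemma \ref{simpleKernel2even}, which gives ${\rm z}(\mathcal{L}_2)=1$; since $\mathcal{L}_2\varphi = 0$ by \eqref{EDO1}, the one-dimensional kernel of $\mathcal{L}_2$ must be exactly $[\varphi]$. Combining, $\Ker(\mathcal{L}) = [(\varphi',0),(0,\varphi)]$, as claimed.

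There is really no hard step here: the proposition is a bookkeeping corollary of the scalar results. The only point deserving a sentence of care is the additivity of ${\rm n}(\cdot)$ and the splitting of the kernel for a diagonal operator, and both are immediate once the eigenvalue problem is seen to decouple. None of the delicate analysis that was needed to pin down the scalar quantities themselves --- the Min--Max principle, the oscillation theorem of \cite{HurJohnsonMartin}, or the Fredholm/series argument of Lemma \ref{simpleKerneleven} --- is invoked again at this stage.
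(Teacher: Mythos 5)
Your proposal is correct and takes essentially the same route as the paper: there, Proposition \ref{propspec} carries no separate proof precisely because it is a summary of the preceding subsection, namely ${\rm n}(\mathcal{L}_1)=1$ and ${\rm n}(\mathcal{L}_2)=0$ from the Min--Max discussion around Lemma \ref{simpleKernel2even}, ${\rm Ker}(\mathcal{L}_1)=[\varphi']$ from Lemma \ref{simpleKerneleven}, and ${\rm Ker}(\mathcal{L}_2)=[\varphi]$ from $\mathcal{L}_2\varphi=0$ together with ${\rm z}(\mathcal{L}_2)=1$, all assembled through the diagonal structure of $\mathcal{L}$. Your explicit decoupling and additivity step is exactly the bookkeeping that the paper leaves implicit.
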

\begin{flushright}
	$\blacksquare$
\end{flushright}

\subsection{Uniqueness of real minimizers}\label{uniquenesseven}

In this subsection we  show the uniqueness for the real periodic minimizers $\varphi$ obtained in Theorem \ref{theorem1complex}. To this end, we proceed as in \cite[Section 3.2]{AmaralBorluk}. The main difference to our approach is that we do not need to assume that the kernel of the linearized operator restricted to the space of zero mean periodic waves are simple. First, the space of zero mean periodic waves is not suitable for our purposes since we are working with real positive periodic waves $\varphi$. The equivalent condition in our case would be assuming that ${\rm Ker}(\mathcal{L}_1)=[\varphi']$ for every $\omega \in \left(\tfrac{1}{2}, +\infty\right)$ and $s \in \left( \tfrac{1}{4},1 \right)$. However, we have already determined this property in Lemma \ref{simpleKerneleven} and therefore, we are in conformity with the arguments proved in \cite[Section 5]{FrankLenzmann} where the authors have established the uniqueness of solitary waves arising as minimizers of a similar problem as in $(\ref{minimization1})$.\\
\indent In what follows, let us define the complex Banach space
$$
\mathbb{V}:= \{f=f_1+if_2\equiv(f_1,f_2) \in {L}^4_{per}\times L_{per}^4 \; ; \; f_1,f_2 \in L^4_{per,e}\},
$$
endowed with the norm of ${L}^4_{per}$. We have the following result:

\begin{prop}\label{prop1-uniq}
	Let $s_0\in(\frac{1}{2}, 1)$. Suppose that
	$(\varphi_{0}+0i, \omega_{0})\in \mathbb{V}\times \left(\frac{1}{2},+\infty\right)$ where $\varphi_0$ is a non-zero real solution of $(\ref{EDO1})$ with $s=s_0$ and $\omega=\omega_{0}$. Then, for some $\delta>0$, there exists a $C^1-$map
	$s\in I\rightarrow (\varphi_{s}+0i,\omega_{s})\in \mathbb{V}\times \left(\frac{1}{2},+\infty\right)$, defined in the interval $I=[s_0, s_0+\delta)$, such that the following
	holds:\\
	\begin{itemize}
		\item[(i)] $(\varphi_{s}+0i, \omega_{s})$ solves the equation $(\ref{EDO1})$ with $\omega=\omega_{s}$, for all $s\in I$;
		\item[(ii)] There exists $\varepsilon>0$ such that $(\varphi_{s}+0i, \omega_{s})$ is the unique solution of $(\ref{EDO1})$ for $s\in I$ in
		the neighbourhood $\{(\varphi+0i,\omega)\in \mathbb{V}\times \left(\frac{1}{2},+\infty\right);\ ||\varphi-\varphi_{0}||_{\mathbb{V}}+|\omega-\omega_{0}|<\varepsilon\}$;
		\item[(iii)] For all $s\in I$, we have $ \int_{-\pi}^{\pi}\varphi_{s}^4dx=\int_{-\pi}^{\pi}\varphi_{0}^4dx.$
	\end{itemize}
\end{prop}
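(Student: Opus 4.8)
The plan is to prove Proposition \ref{prop1-uniq} by a continuation argument in the fractional parameter $s$, applying the implicit function theorem over the subspace of even functions. Writing $\tau := \int_{-\pi}^{\pi}\varphi_0^4\,dx$, the pair $(\varphi_s,\omega_s)$ must solve the system
\begin{equation*}
\mathcal{G}(\varphi,\omega,s) := \left( (-\Delta)^s\varphi + \omega\varphi - \varphi^3,\ \int_{-\pi}^{\pi}\varphi^4\,dx - \tau \right) = (0,0),
\end{equation*}
in which the first component encodes $(\ref{EDO1})$ and the scalar component encodes requirement (iii). Since $s_0>\tfrac12$ we have $2s_0>1$, so $H^{2s_0}_{per,e}$ is a Banach algebra and both the cubic term and the $L^4$-constraint depend smoothly on $\varphi$. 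One cannot, however, apply the implicit function theorem to $\mathcal{G}$ directly, because the domain $H^{2s}_{per}$ of $(-\Delta)^s$ shrinks as $s$ increases beyond $s_0$, so $(-\Delta)^s\varphi$ is not even defined for a general $\varphi\in H^{2s_0}_{per,e}$ when $s>s_0$. I would therefore work instead with the equivalent resolvent-form map on the fixed space $H^{2s_0}_{per,e}$,
\begin{equation*}
\widetilde{\mathcal{G}}(\varphi,\omega,s) := \left( \varphi - \big((-\Delta)^s+\omega\big)^{-1}\varphi^3,\ \int_{-\pi}^{\pi}\varphi^4\,dx - \tau \right),
\end{equation*}
noting that $(\varphi,\omega)$ solves $\widetilde{\mathcal{G}}=0$ if and only if it solves $\mathcal{G}=0$, and that $\widetilde{\mathcal{G}}(\varphi_0,\omega_0,s_0)=(0,0)$ by hypothesis.

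The heart of the matter is to show that $D_{(\varphi,\omega)}\widetilde{\mathcal{G}}(\varphi_0,\omega_0,s_0)$ is an isomorphism of $H^{2s_0}_{per,e}\times\R$ onto itself. A direct computation shows that this derivative factors as $\big((-\Delta)^{s_0}+\omega_0\big)^{-1}$ (which is invertible onto its range) composed with the map
\begin{equation*}
(f,\mu)\longmapsto \left( \mathcal{L}_1 f + \mu\,\varphi_0,\ 4\int_{-\pi}^{\pi}\varphi_0^3 f\,dx \right), \qquad \mathcal{L}_1 = (-\Delta)^{s_0}+\omega_0-3\varphi_0^2,
\end{equation*}
so it suffices to invert the latter from $H^{2s_0}_{per,e}\times\R$ onto $L^2_{per,e}\times\R$. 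By Lemma \ref{simpleKerneleven} we have ${\rm Ker}(\mathcal{L}_1)=[\varphi_0']$; as $\varphi_0'$ is odd, the restriction of $\mathcal{L}_1$ to $L^2_{per,e}$ has trivial kernel and, being self-adjoint with compact resolvent, is an isomorphism from $H^{2s_0}_{per,e}$ onto $L^2_{per,e}$. Given $(g,c)\in L^2_{per,e}\times\R$, I would then solve $\mathcal{L}_1 f + \mu\varphi_0 = g$ by $f = \mathcal{L}_1^{-1}g - \mu\,\mathcal{L}_1^{-1}\varphi_0$ and substitute into the scalar equation, reducing the whole question to the nondegeneracy of $\int_{-\pi}^{\pi}\varphi_0^3\,\mathcal{L}_1^{-1}\varphi_0\,dx$.

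This is exactly where the profile equation enters. From $\mathcal{L}_1\varphi_0=-2\varphi_0^3$ one obtains $\mathcal{L}_1^{-1}\varphi_0^3 = -\tfrac12\varphi_0$ on $L^2_{per,e}$, so by self-adjointness
\begin{equation*}
\int_{-\pi}^{\pi}\varphi_0^3\,\mathcal{L}_1^{-1}\varphi_0\,dx = \int_{-\pi}^{\pi}\big(\mathcal{L}_1^{-1}\varphi_0^3\big)\varphi_0\,dx = -\tfrac12\int_{-\pi}^{\pi}\varphi_0^2\,dx < 0,
\end{equation*}
which is nonzero because $\varphi_0\not\equiv0$. Hence $\mu$ and then $f$ are uniquely determined, the derivative is an isomorphism, and the implicit function theorem produces a $C^1$ curve $s\mapsto(\varphi_s,\omega_s)$ on an interval $(s_0-\delta,s_0+\delta)$, which I restrict to $I=[s_0,s_0+\delta)$. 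Conclusions (i) and (iii) follow at once from $\widetilde{\mathcal{G}}(\varphi_s,\omega_s,s)=(0,0)$, while (ii) is the local uniqueness built into the implicit function theorem, a nearby solution in $\mathbb{V}$ being promoted to the branch via the embedding $H^{2s_0}_{per,e}\hookrightarrow\mathbb{V}$ together with the bootstrapping regularity used in Proposition \ref{theorem1even}.

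The step I expect to be the main obstacle is the joint $C^1$-regularity of $\widetilde{\mathcal{G}}$ in the variable $s$, which is precisely the reason for passing to the resolvent form. Here I would argue on the Fourier side: the multiplier $\big(|k|^{2s}+\omega\big)^{-1}$ of $\big((-\Delta)^s+\omega\big)^{-1}$ and its $s$-derivative $-2\ln|k|\,|k|^{2s}\big(|k|^{2s}+\omega\big)^{-2}$ define bounded, indeed regularizing, operators that map $L^2_{per,e}$ into $H^{2s_0}_{per,e}$ uniformly for $s$ near $s_0$, so that $\widetilde{\mathcal{G}}$ is genuinely $C^1$ in all three variables jointly. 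With this in hand the construction follows the scheme of \cite[Section 3.2]{AmaralBorluk} and \cite[Section 5]{FrankLenzmann}, the key improvement being that, since $\varphi_0>0$ is even and single-lobe, the kernel of $\mathcal{L}_1$ is automatically removed by restricting to even functions, so no zero-mean hypothesis on the linearized operator is needed.
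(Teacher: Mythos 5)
Your proposal is correct and follows the same overall strategy as the paper: both augment the resolvent (fixed-point) form of the profile equation with the scalar constraint $\int_{-\pi}^{\pi}\varphi^4\,dx-\tau$, freeze $s$ as a parameter, and apply the implicit function theorem in $(\varphi,\omega)$ at $(\varphi_0,\omega_0,s_0)$, with invertibility of the linearization resting on Lemma \ref{simpleKerneleven} and the oddness of $\varphi_0'$. Two differences are worth recording. First, the paper runs the implicit function theorem directly on $\mathbb{V}\times\left(\tfrac{1}{2},+\infty\right)$, invoking \cite[Lemma E.1]{FrankLenzmann} for the $C^1$-smoothness of the map there; consequently the uniqueness in item (ii), which is phrased in a $\mathbb{V}$-neighbourhood, falls out of the theorem with no extra work. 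You instead work on $H^{2s_0}_{per,e}\times\R$, which makes smoothness of the map elementary (Banach algebra plus Fourier multiplier bounds) but leaves you owing the promotion step for (ii): a competitor solution that is only $L^4$-close to $\varphi_0$ must be shown to be $H^{2s_0}$-close before the local uniqueness of the implicit function theorem applies. You flag this bootstrapping step without carrying it out, which is comparable to the level of detail of the paper itself (the paper defers everything to \cite[Proposition 5]{AmaralBorluk}). Second, and this is what your write-up genuinely buys, you actually prove the invertibility that the paper merely asserts: the factorization of the derivative through $((-\Delta)^{s_0}+\omega_0)^{-1}$ (using $((-\Delta)^{s_0}+\omega_0)^{-1}\varphi_0^3=\varphi_0$), the reduction to $(f,\mu)\mapsto\bigl(\mathcal{L}_1 f+\mu\varphi_0,\,4\int_{-\pi}^{\pi}\varphi_0^3 f\,dx\bigr)$, and the nondegeneracy computation $\int_{-\pi}^{\pi}\varphi_0^3\,\mathcal{L}_1^{-1}\varphi_0\,dx=-\tfrac{1}{2}\int_{-\pi}^{\pi}\varphi_0^2\,dx<0$ from $\mathcal{L}_1\varphi_0=-2\varphi_0^3$ and self-adjointness are all correct. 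One small technical slip: the $s$-derivative of the resolvent multiplier, $-2\ln|k|\,|k|^{2s}\bigl(|k|^{2s}+\omega\bigr)^{-2}$, does \emph{not} map $L^2_{per,e}$ into $H^{2s_0}_{per,e}$ uniformly near $s=s_0$ (at $s=s_0$ it loses a logarithm), but this is harmless for your argument, since the multiplier is applied to $\varphi^3\in H^{2s_0}_{per,e}$, on which it is bounded (indeed regularizing) because $\ln|k|\,|k|^{2s_0}\bigl(|k|^{2s_0}+\omega\bigr)^{-2}\to 0$ as $|k|\to\infty$; joint $C^1$-dependence on $s$ therefore still holds.
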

\begin{proof}
The proof is similar to the one given in \cite[Proposition 5]{AmaralBorluk} therefore we  only  give the main steps. Indeed, let $s_0 \in \left(\tfrac{1}{2}, 1 \right)$ be fixed and consider $(\Phi_0,\omega_0):=(\varphi_0+i0,\omega_0) \in \mathbb{V} \times \left(\tfrac{1}{2},+\infty\right)$, where $\varphi_0 \in \mathbb{V}$ satisfies $(\ref{EDO1})$.

We define the mapping $\mathcal{F}: \mathbb{V} \times \left(\tfrac{1}{2}, +\infty\right) \times I_\delta \longrightarrow \mathbb{V} \times \mathbb{R}$ by
\begin{eqnarray}
\mathcal{F}(\Phi,\omega, s)=\begin{bmatrix}
\Phi-((-\Delta)^s+\omega)^{-1}|\Phi|^2\Phi \\
\|\Phi\|_{{L}_{per}^4}^4-\|\Phi_0\|_{{L}_{per}^4}^4
\end{bmatrix},
\end{eqnarray}
where $I_\delta:=[s_0, s_0+\delta)$ with $\delta>0$ will be chosen later. We note that $\mathcal{F}$ is a  well-defined $C^1$- mapping (\cite[Lemma E.1]{FrankLenzmann}) and $\mathcal{F}(\Phi_0,\omega_0, s_0)=(0,0)$.

In particular,  we see that the Fréchet derivative of $\mathcal{F}$ with respect $(\Phi,\omega)$ at $(\Phi_0,\omega_0,s_0)$ is given by
$$
D_{\Phi,\omega}\mathcal{F}(\Phi_0,\omega_0,s_0)=\begin{bmatrix}
1-((-\Delta)^{s_0}+\omega_0)^{-1} \, 3\Phi_0^2 & ((-\Delta)^{s_0}+\omega_0)^{-2}\Phi_0^2  \\
4(\Phi_0^3, \cdot)_{{L}_{per}^2} & 0
\end{bmatrix}.
$$

Since $\varphi_0'$ is odd and ${\rm Ker}(\mathcal{L}_1)=[\varphi_0']$, we can show that $D_{\Phi,\omega}\mathcal{F}(\Phi_0,\omega_0,s_0)$ is invertible. By implicit function theorem, we guarantee the existence of a $C^1$-map
\begin{equation}\label{localbranchfunction}
s \in I_{\delta} \longmapsto (\Phi_s, \omega_s) \in \mathbb{V} \times \left(\tfrac{1}{2}, +\infty \right),
\end{equation}
defined over $I_{\delta}$, where $\delta>0$ is small enough. Here $\Phi_s$ is defined in a neighbourhood of the point $\Phi_0=\varphi_0+0i\in \mathbb{V}$ and this fact enables us to define, without loss of generality, that $\Phi_s:=\varphi_s+0i\in \mathbb{V}$. Thus, we can consider a local branch of solutions $(\varphi_s+0i,\omega_s) \in \mathbb{V} \times \left(\tfrac{1}{2}, +\infty \right)$  for the equation \eqref{EDO1} and parametrized by $s \in I_\delta$.

\end{proof}
The next step is to consider the corresponding maximal extension of the branch \\ \mbox{$(\varphi_{s}, \omega_{s}):=(\varphi_s+0i,\omega_s)$} given by
$s\in [s_0, s_{*})$, where 
$$\begin{array}{rrrr}s_{*}:=\sup\{q;\ s_0<q<1,\  (\varphi_{s},\omega_{s})\in C^1([s_0, q);\mathbb{V}\times \left(\frac{1}{2},+\infty\right))\ \mbox{given by Proposition \ref{prop1-uniq}}\\\\
	\mbox{and}\ (\varphi_{s}, \omega_{s})\ \mbox{satisfies (\ref{EDO1}) for}\ s\in [s_0, q)\}.\end{array}$$
It is clear that $s_{*}\leq1$ and it makes necessary to prove $s_{*}=1$. To do so, we establish the following result:

\begin{prop}\label{prop-seq-ext}
	Let $\{s_n\}_{n=1}^{n=+\infty}\subset (\frac{1}{2}, s_{*})$ be a sequence such that $s_n\rightarrow s_{*}$. Furthermore,
	we assume that $\varphi_{s_n}\in \mathbb{V}$ are the corresponding solutions obtained in Proposition $\ref{prop1-uniq}$ with frequency of the wave given by $\omega_{s_{n}}$. Up to a subsequence, it follows that
	$$\varphi_{s_n}\rightarrow \varphi_{*}\ \mbox{in}\ L_{per}^p(\mathbb{T})\ \ \mbox{and}\ \ \omega_{s_n}\rightarrow \omega_{*},$$
for all $p\geq1$. Here, $\varphi_{*}$ satisfies equation the $(\ref{EDO1})$ where $\omega_{*}\in \left(\frac{1}{2},+\infty\right)$ is the corresponding frequency of the wave. Moreover, the corresponding maximal branch $(\varphi_{s}, \omega_{s})\in C^1([s_0, s_{*});\mathbb{V}\times\left(\frac{1}{2},+\infty\right))$ extends to $s_{*}=1$.
\end{prop}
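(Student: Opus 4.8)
The plan is to establish a priori bounds on the branch $(\varphi_{s_n},\omega_{s_n})$ that are uniform in $n$, extract a convergent subsequence by compactness, identify the limit as a genuine solution of $(\ref{EDO1})$, and finally rule out $s_*<1$ by a continuation argument. First I would fix the normalization $\int_{-\pi}^{\pi}\varphi_{s_n}^4\,dx=\int_{-\pi}^\pi \varphi_0^4\,dx=:\tau$, which is preserved along the branch by item (iii) of Proposition \ref{prop1-uniq}. Pairing equation $(\ref{EDO1})$ with $\varphi_{s_n}$ gives
\begin{equation}\label{plan-identity}
\|(-\Delta)^{\frac{s_n}{2}}\varphi_{s_n}\|_{L^2_{per}}^2+\omega_{s_n}\|\varphi_{s_n}\|_{L^2_{per}}^2=\tau,
\end{equation}
so both the fractional seminorm and, crucially, $\omega_{s_n}\|\varphi_{s_n}\|_{L^2_{per}}^2$ stay bounded. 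To bound $\omega_{s_n}$ away from $\tfrac12$ and from $+\infty$, I would combine \eqref{plan-identity} with the periodic Gagliardo--Nirenberg inequality of Corollary \ref{PGN1} applied to $\|\varphi_{s_n}\|_{L^4_{per}}^4=\tau$; since the exponent $\tfrac1{s_n}$ varies with $s_n\in(\tfrac12,s_*)$, I must check that the constant $C_4$ can be taken uniform on the compact $s$-interval, which is where the $s$-dependence of the embedding constants needs a little care.

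Once $\omega_{s_n}$ is pinned in a compact subinterval of $(\tfrac12,+\infty)$ and $\|\varphi_{s_n}\|_{H^{s_*-\epsilon}_{per}}$ is bounded, I would pass to a subsequence so that $\omega_{s_n}\to\omega_*\in(\tfrac12,+\infty)$ and $\varphi_{s_n}\rightharpoonup\varphi_*$ weakly in $H^{r}_{per}$ for a suitable $r$ below $s_*$. The compact Rellich-type embedding $H^{r}_{per}\hookrightarrow L^p_{per}$ then yields strong convergence $\varphi_{s_n}\to\varphi_*$ in $L^p_{per}$ for every $p\geq 1$, exactly as claimed, and in particular preserves the constraint $\int_{-\pi}^\pi\varphi_*^4\,dx=\tau$ so that $\varphi_*\not\equiv 0$. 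To see that $\varphi_*$ solves $(\ref{EDO1})$ at $s=s_*$, I would rewrite the equation in the fixed-point form $\varphi_{s_n}=((-\Delta)^{s_n}+\omega_{s_n})^{-1}\varphi_{s_n}^3$ used throughout Section \ref{evensolutions}: the right-hand side converges because $\varphi_{s_n}^3\to\varphi_*^3$ in $L^2_{per}$ and the resolvents $((-\Delta)^{s_n}+\omega_{s_n})^{-1}$ converge strongly to $((-\Delta)^{s_*}+\omega_*)^{-1}$ (their Fourier multipliers $(|k|^{2s_n}+\omega_{s_n})^{-1}$ converge pointwise and are uniformly bounded). A bootstrapping argument as in Proposition \ref{theorem1even} upgrades $\varphi_*$ to $H^\infty_{per,e}$, and evenness is inherited from the $\varphi_{s_n}$.

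The final step is to extend the maximal branch all the way to $s_*=1$, and this is the part I expect to be the main obstacle. Having produced the limit solution $(\varphi_*,\omega_*)$ at $s=s_*$, I would like to re-invoke the local existence-and-uniqueness result Proposition \ref{prop1-uniq} with base point $s_*$ to continue the branch slightly beyond $s_*$, contradicting maximality unless $s_*=1$. The delicate point is that Proposition \ref{prop1-uniq} requires the invertibility of $D_{\Phi,\omega}\mathcal{F}$ at the new base point, which in turn rests on the non-degeneracy ${\rm Ker}(\mathcal{L}_1)=[\varphi_*']$ for the limiting profile. This is precisely guaranteed by Lemma \ref{simpleKerneleven}, but only because $\varphi_*$ is positive and single-lobe; hence I must verify that these qualitative features survive the limit. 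Positivity and the single-lobe structure pass to $\varphi_*$ because $L^p$-convergence (together with the uniform smoothness from bootstrapping, giving $C^1$-convergence) preserves the sign and the monotonicity profile, so that Lemma \ref{simpleKerneleven} applies to $\varphi_*$ and the implicit function theorem re-starts the branch. Since $s_*\leq 1$ by definition, the only way to avoid the contradiction is $s_*=1$, which completes the argument.
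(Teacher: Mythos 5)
The paper's own ``proof'' of this proposition is nothing more than a pointer to \cite[Proposition 6]{AmaralBorluk}, and your overall architecture --- uniform bounds from the conserved constraint of Proposition \ref{prop1-uniq}(iii), compactness and extraction of $L^p_{per}$ limits, identification of the limit through the fixed-point form $\varphi=((-\Delta)^{s}+\omega)^{-1}\varphi^{3}$ with strong resolvent convergence, then an implicit-function-theorem restart at $s_{*}$ to contradict maximality --- is exactly the strategy of that cited result; the resolvent-convergence and bootstrap steps you describe are sound. The first genuine gap is your claim that Corollary \ref{PGN1} pins $\omega_{s_n}$ in a compact subinterval of $(\tfrac12,+\infty)$. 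Combining Gagliardo--Nirenberg with the identity $\|(-\Delta)^{s_n/2}\varphi_{s_n}\|_{L^2_{per}}^{2}+\omega_{s_n}\|\varphi_{s_n}\|_{L^2_{per}}^{2}=\tau$ rules out $\omega_{s_n}\to+\infty$ (large $\omega_{s_n}$ forces $\|\varphi_{s_n}\|_{L^2_{per}}\to0$, and then the inequality forces $\tau\to0$), but it says nothing whatsoever about $\omega_{s_n}\downarrow\tfrac12$. Excluding that case is essential, not cosmetic: if $\omega_{*}=\tfrac12$ the claimed conclusion $\omega_{*}\in\left(\tfrac12,+\infty\right)$ fails, and the natural degenerate scenario is collapse onto the bifurcation point $\varphi_{*}\equiv\sqrt{1/2}$, where $\mathcal{L}_1=(-\Delta)^{s_{*}}-1$ has the even kernel function $\cos x$, so the operator $D_{\Phi,\omega}\mathcal{F}$ you must invert to restart the branch is singular. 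The constraint $\int_{-\pi}^{\pi}\varphi_{s_n}^{4}\,dx=\tau$ does not by itself forbid this collapse unless one separately shows $\tau\neq\tfrac{\pi}{2}$ (the value of the constraint at the constant state $\sqrt{1/2}$) or proves that the branch cannot degenerate to a constant; some dedicated argument for the lower bound is therefore missing.

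The second gap sits in the restart step, which you yourself flag as the crux. Convergence in $L^{p}_{per}$, or even in $C^{1}$ after bootstrapping, only yields $\varphi_{*}\geq0$ and non-strict monotonicity on $[0,\pi]$; it does not ``preserve the sign'' or the single-lobe property in the strict sense needed to invoke Lemma \ref{simpleKerneleven}, whose proof uses the strict positivity of $\varphi$ and the sign pattern of $\varphi(\varphi^{2}-\varphi^{2}(x_{0}))$. Strict positivity of $\varphi_{*}$ has to be recovered from the equation rather than from the convergence: for instance, from $\varphi_{*}=((-\Delta)^{s_{*}}+\omega_{*})^{-1}\varphi_{*}^{3}$ together with $\varphi_{*}\geq0$, $\varphi_{*}\not\equiv0$ (which your constraint argument does give) and the positivity-improving property of the resolvent, or by the Krein--Rutman argument of Section \ref{spectralanalysis-section} applied to $\mathcal{L}_2\varphi_{*}=0$; nonconstancy and the single-lobe structure of $\varphi_{*}$ (no plateaus) likewise need their own justification. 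Finally, to actually contradict the maximality of $s_{*}$ you need the whole branch, not merely a subsequence, to converge as $s\to s_{*}^{-}$, so that for $s$ close to $s_{*}$ the old branch enters the uniqueness neighbourhood of Proposition \ref{prop1-uniq}(ii) applied at the new base point $(\varphi_{*},\omega_{*},s_{*})$ and can be glued to the restarted one; upgrading subsequential to full convergence requires showing that all subsequential limits coincide, a step your plan does not address.
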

\begin{proof}
	The proof of this result is similar to \cite[Proposition 6]{AmaralBorluk} and we omit the details.
\end{proof}
\begin{proposition}[Uniqueness of real minimizers]\label{UniquenessRealEven}
Let $s \in \left(\tfrac{1}{2}, 1 \right)$ be fixed. The real and even periodic minimizer obtained in Theorem \ref{theorem1complex} is unique.
\end{proposition}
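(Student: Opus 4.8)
The plan is to prove uniqueness by a continuation argument in the fractional parameter $s$, in the spirit of \cite{FrankLenzmann} and \cite{AmaralBorluk}, using the value $s=1$ as an anchor where uniqueness is classically known. The ingredients are already in place: Proposition \ref{prop1-uniq} produces, from any given real even solution $\varphi_0$ of \eqref{EDO1} at $s=s_0$, a locally unique $C^1$ branch $s\mapsto(\varphi_s+0i,\omega_s)$ preserving the constraint $\int_{-\pi}^{\pi}\varphi_s^4\,dx=\int_{-\pi}^{\pi}\varphi_0^4\,dx=\tau$, while Proposition \ref{prop-seq-ext} shows that the maximal branch extends all the way to $s_*=1$, with $\varphi_{s_n}\to\varphi_*$ in $L^p_{per}$ and $\omega_{s_n}\to\omega_*$, where $(\varphi_*,\omega_*)$ solves the limiting equation.

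First I would argue by contradiction. Suppose that at the fixed value $s=s_0\in(\tfrac12,1)$ there exist two distinct real even minimizers $\varphi_0^{(1)}\neq\varphi_0^{(2)}$ obtained in Theorem \ref{theorem1complex}, both with $\int_{-\pi}^{\pi}(\varphi_0^{(j)})^4\,dx=\tau$. Each is the left endpoint of its own maximal $C^1$ branch $(\varphi_s^{(j)},\omega_s^{(j)})$, $j=1,2$, furnished by Propositions \ref{prop1-uniq} and \ref{prop-seq-ext}; both branches preserve the $L^4$-constraint and extend to $s=1$, converging to limits $(\varphi_*^{(j)},\omega_*^{(j)})$ solving the classical stationary cubic NLS equation
\begin{equation*}
-\varphi''+\omega\varphi-\varphi^3=0
\end{equation*}
on $\mathbb{T}$ with $\int_{-\pi}^{\pi}(\varphi_*^{(j)})^4\,dx=\tau$.

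Second, I would invoke uniqueness at the endpoint and then propagate it backwards. Each limit $\varphi_*^{(j)}$ is an even, positive, $2\pi$-periodic single-lobe solution (these properties pass to the limit along the branch, since they are preserved by Theorem \ref{theorem1even} and the convergence is strong in every $L^p_{per}$), hence a dnoidal wave of the focusing cubic NLS. For a fixed period and a fixed value of the $L^4$-norm these waves are unique by the classical elliptic-function analysis of \cite{Angulo}, so that $\varphi_*^{(1)}=\varphi_*^{(2)}$ and $\omega_*^{(1)}=\omega_*^{(2)}$. By Lemma \ref{simpleKerneleven} we have $\Ker(\mathcal{L}_1)=[\varphi']$ at every point of either branch, so the Fréchet derivative $D_{\Phi,\omega}\mathcal{F}$ appearing in the proof of Proposition \ref{prop1-uniq} is invertible along the whole branch, and the branch through any point is locally unique by the implicit function theorem. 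Consequently the set $\{s\in[s_0,1]:\varphi_s^{(1)}=\varphi_s^{(2)}\ \text{and}\ \omega_s^{(1)}=\omega_s^{(2)}\}$ is nonempty (it contains $s=1$), closed by continuity and open by local uniqueness, hence equals $[s_0,1]$. In particular the two branches agree at $s=s_0$, contradicting $\varphi_0^{(1)}\neq\varphi_0^{(2)}$.

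The hard part will be the anchoring step at $s=1$: one must ensure that the even positive single-lobe periodic solutions of the classical cubic NLS with prescribed period and prescribed $L^4$-norm are genuinely unique, which rests on the monotonicity of the relevant quantities in the elliptic modulus of the dnoidal representation. A secondary technical point is to verify that the convergence of Proposition \ref{prop-seq-ext} preserves positivity and the single-lobe structure up to the limit, so that the classical uniqueness statement is indeed applicable to each $\varphi_*^{(j)}$; once these two points are secured, the open--closed dichotomy above closes the argument.
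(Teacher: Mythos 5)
Your proposal follows essentially the same route as the paper: the paper's proof is a one-line deferral to \cite[Proposition 7]{AmaralBorluk}, which is precisely the Frank--Lenzmann continuation argument you describe --- branches in $s$ built from Propositions \ref{prop1-uniq} and \ref{prop-seq-ext}, anchored at the classical case $s=1$, and propagated backward to $s_0$ by a local-uniqueness (open--closed) argument. The two technical points you flag --- uniqueness of the dnoidal anchor under the prescribed $L^4$ constraint, and persistence of positivity and nondegeneracy along the branch so that the implicit function theorem applies at every branch point rather than only at minimizers covered by Lemma \ref{simpleKerneleven} --- are exactly the details the cited reference is invoked to supply.
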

\begin{proof}
It follows by similar arguments as in \cite[Proposition 7]{AmaralBorluk}.
\end{proof}

\section{Orbital Stability}\label{stability-section}

In this section, we present the orbital stability results. It is well known that (\ref{fNLS1}) has two basic symmetries, namely, translation and rotation. If $u = u(x,t)$ is a solution of (\ref{fNLS1}), so are $e^{-i\zeta}u$ and $u(x-r,t)$ for any  $\zeta, r \in \mathbb{R}$. Considering $u=P+iQ\equiv(P,Q)$, we obtain that \eqref{fNLS1} is invariant under the transformations
\begin{equation}\label{T1}
	S_1(\zeta)u := \left(
	\begin{array}{cc}
		\cos{\zeta} & \sin{\zeta} \\
		- \sin{\zeta} & \cos{\zeta}
	\end{array}
	\right) \left(
	\begin{array}{c}
		P \\ Q
	\end{array}
	\right)
\end{equation}
and
\begin{equation}\label{T2}
	S_2(r)u := \left(
	\begin{array}{c}
		P(\cdot - r, \cdot) \\
		Q(\cdot - r, \cdot)
	\end{array}
	\right).
\end{equation}
The actions $S_1$ and $S_2$ define unitary groups in ${H}^s_{per}$ with infinitesimal generators given by
$
	S_1'(0)u := \left(
	\begin{array}{cc}
		0 & 1 \\
		-1 & 0
	\end{array}
	\right) \left(
	\begin{array}{c}
		P \\
		Q
	\end{array}
	\right) = J \left(
	\begin{array}{c}
		P \\
		Q
	\end{array}
	\right)
$
and
$
	S_2'(0)u := \partial_x \left(
	\begin{array}{c}
		P \\
		Q
	\end{array}
	\right).
$

A standing wave solution  as in (\ref{standingwave}) is given by
$
	u(x,t) = e^{i\omega t}\varphi(x)=\left(
	\begin{array}{c}
		\varphi(x) \cos(\omega t) \\
		\varphi(x) \sin(\omega t)
	\end{array}
	\right).
$
Since the equation $(\ref{fNLS1})$ is invariant under the actions of  $S_1$ and $S_2$, we define the orbit generated by
$
\Phi = (\varphi,0)
$
as
\begin{equation*}
	\mathcal{O}_\Phi = \big\{S_1(\zeta)S_2(r)\Phi; \zeta, r \in \R \big\} = \left\{ \left(
	\begin{array}{cc}
		\cos{\zeta} & \sin{\zeta} \\
		-\sin{\zeta} & \cos{\zeta}
	\end{array}
	\right) \left(
	\begin{array}{c}
		\varphi(\cdot - r) \\
		0
	\end{array}
	\right); \; \zeta, r \in \R \right\}.
\end{equation*}

The pseudometric $d$ in ${H}^s_{per}$ is given by
$
	d(f,g):= \inf \{ \|f - S_1(\zeta)S_2(r)g\|_{{H}_{per}^s} ; \; \, \zeta, r \in \R\}.
$
The distance between $f$ and $g$ is the distance between $f$ and the orbit generated by $g$ under the action of rotation and translation, so that
$
d(f,\Phi) = d(f,\mathcal{O}_\Phi).
$

We now present our notion of orbital stability.

\begin{definition}\label{defstab}
  Let $\Theta(x,t) = (\varphi(x)\cos(\omega t), \varphi(x) \sin(\omega t))$ be a standing wave for (\ref{fNLS1}). We say that $\Theta$ is orbitally stable in ${H}^s_{per}$ provided that, given $\varepsilon > 0$, there exists $\delta > 0$ with the following property: if $u_0 \in {H}^s_{per}$ satisfies $\|u_0 - \Phi\|_{{H}^s_{per}} < \delta$, then the local solution $u(t)$ defined in the semi-interval $[0,+\infty)$ satisfies
	$
		d(u(t), \mathcal{O}_\Phi) < \varepsilon$,  for all  $t \geq 0.
	$
	Otherwise, we say that $\Theta$ is orbitally unstable in ${H}^s_{per}$.
\end{definition}

\textit{Proof of Theorem $\ref{mainth}$}. By Proposition $\ref{propspec}$, we see that $n(\mathcal{L})=1$ and ${\rm Ker}(\mathcal{L})=[(\varphi',0),(0,\varphi)]$ and these two basic facts are crucial to determine results of orbital stability/instability for periodic waves. Since both spectral properties are valid, the proof of orbital stability follows similarly as in \cite[Theorem 4.17]{natalipastor} but we need to take into account the result of global well-posedness as in Proposition $\ref{gwp}$ to prove the stability in terms of the two symmetries defined for the orbit $\mathcal{O}_{\Phi}$. For the orbital stability, we need to consider the Vakhitov–Kolokolov condition $\mathsf{q}>0$ which is equivalent to consider $(\mathcal{L}_1\Psi,\Psi)_{L^2_{per}}<0$, where $\Psi=-\frac{d}{d\omega}\varphi$, $\mathcal{L}_1\Psi=\varphi$ and  $(\mathcal{L}_1\Psi,\varphi')_{L^2_{per}}=0$. For the orbital instability in ${H}_{per,e}^s$, we first use  the approach in \cite{GrillakisShatahStraussI} and the condition $\mathsf{q}<0$ by considering the orbit $\mathcal{O}_{\Phi}$ having only one basic symmetry (namely, the orbit generated by the rotations only). As far as we know, the theory in \cite{GrillakisShatahStraussI} only requires that ${\rm n}(\mathcal{L})=1$ and ${\rm z}(\mathcal{L})=1,$ so that we need to remove out one of the symmetries in Definition $\ref{defstab}$. Since the space ${H}_{per,e}^s$ is not invariant under translation and $\varphi'$ is odd, the pair $(\varphi',0)$ can not be considered as an element of the subspace $\Ker(\mathcal{L})$ and thus, under this restriction, we have ${\rm Ker}(\mathcal{L}|_{L_{per,e}^2})=[(0,\varphi)]$. Here $\mathcal{L}|_{L_{per,e}^2}$ denotes the restriction of $\mathcal{L}$ over the subspace of even functions $L_{per,e}^2$. It is clear that if the  standing wave is orbitally unstable in a subspace ${H}_{per,e}^s$ of ${H}_{per}^s$,  then  it will also be unstable in the whole energy space ${H}_{per}^s$. The numerical approach determined below will be useful to decide the values of $s\in(\tfrac{1}{4},1]$ for which $\mathsf{q}>0$ or $\mathsf{q}<0$ in order to prove the orbital stability/instability.
\begin{flushright}
	$\blacksquare$
\end{flushright}

\subsection{Numerical experiments}

In this section we generate the periodic standing wave solutions of the fNLS equation by using the Petviashvili's iteration method.
The method is widely used for the generation of travelling wave solutions (\cite{aduran,AD,LP,OBM,PS}) and, specifically in the case of fNLS equation, some numerical studies have been also determined in \cite{klein}. Besides providing a numerical method in order to present the periodic single-lobe profile $\varphi$, our intention is to determine the sign of the quantity: \begin{equation}\label{q}\textsf{q}=\frac{d}{d\omega}\int_{-\pi}^{\pi} \varphi^2 dx.
	\end{equation}

 \begin{figure}[h]
\hspace{10pt}
 \begin{minipage}[t]{0.4\linewidth}
  \includegraphics[width=3.1in]{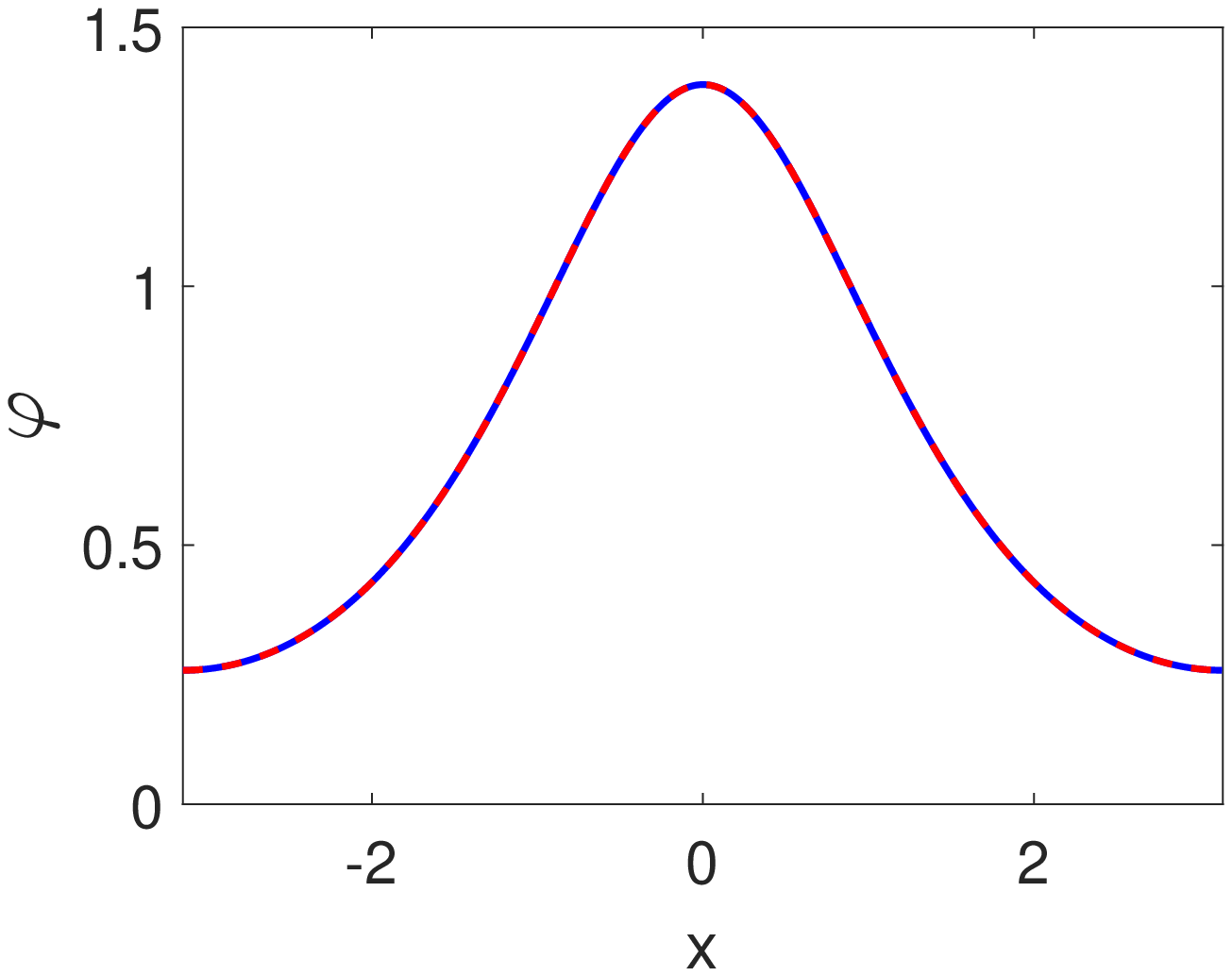}
 \end{minipage}
\hspace{30pt}
\begin{minipage}[t]{0.40\linewidth}
   \includegraphics[width=3.2in]{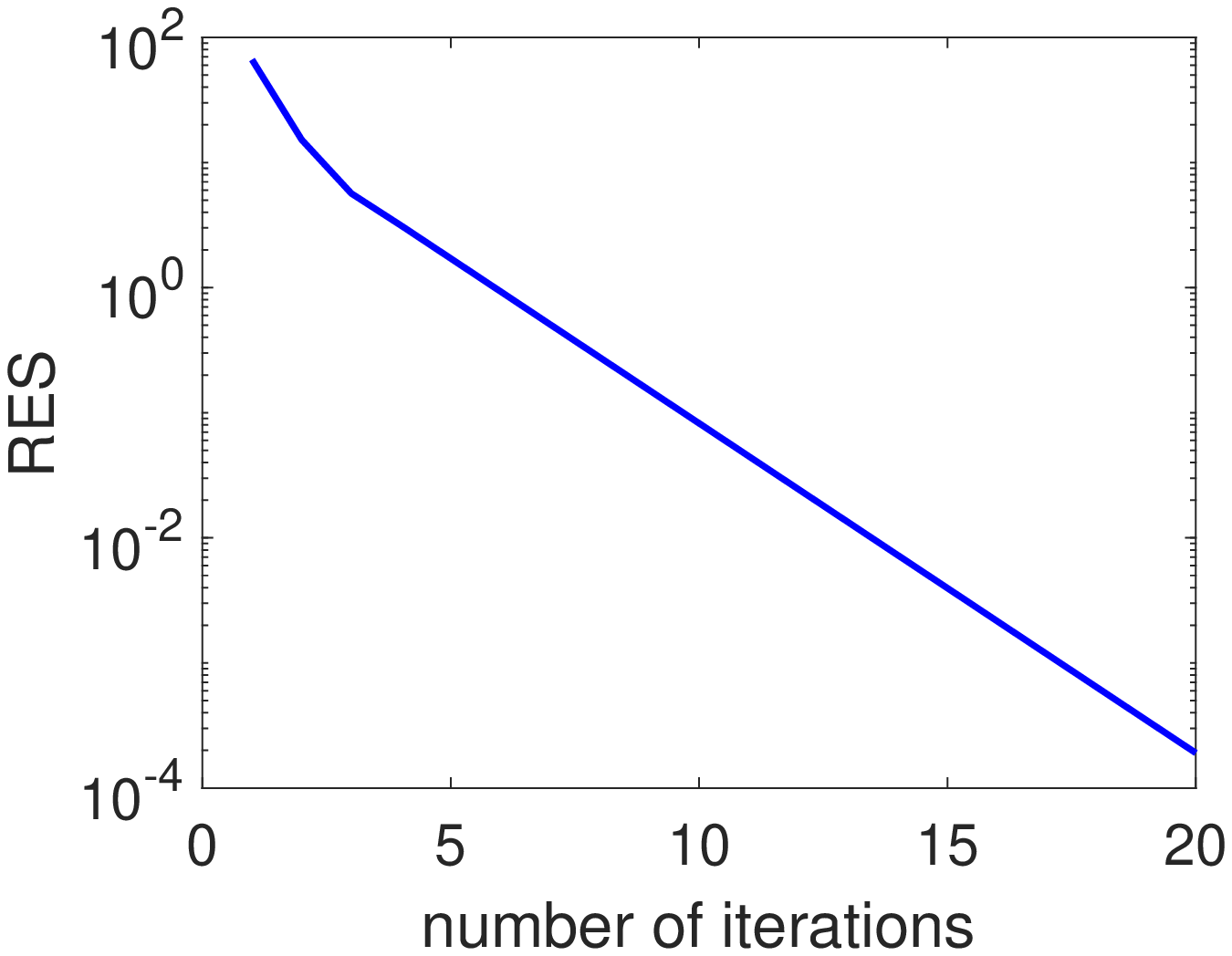}
 \end{minipage}
 \hspace{30pt}
\begin{minipage}[t]{0.40\linewidth}
   \includegraphics[width=3.3in]{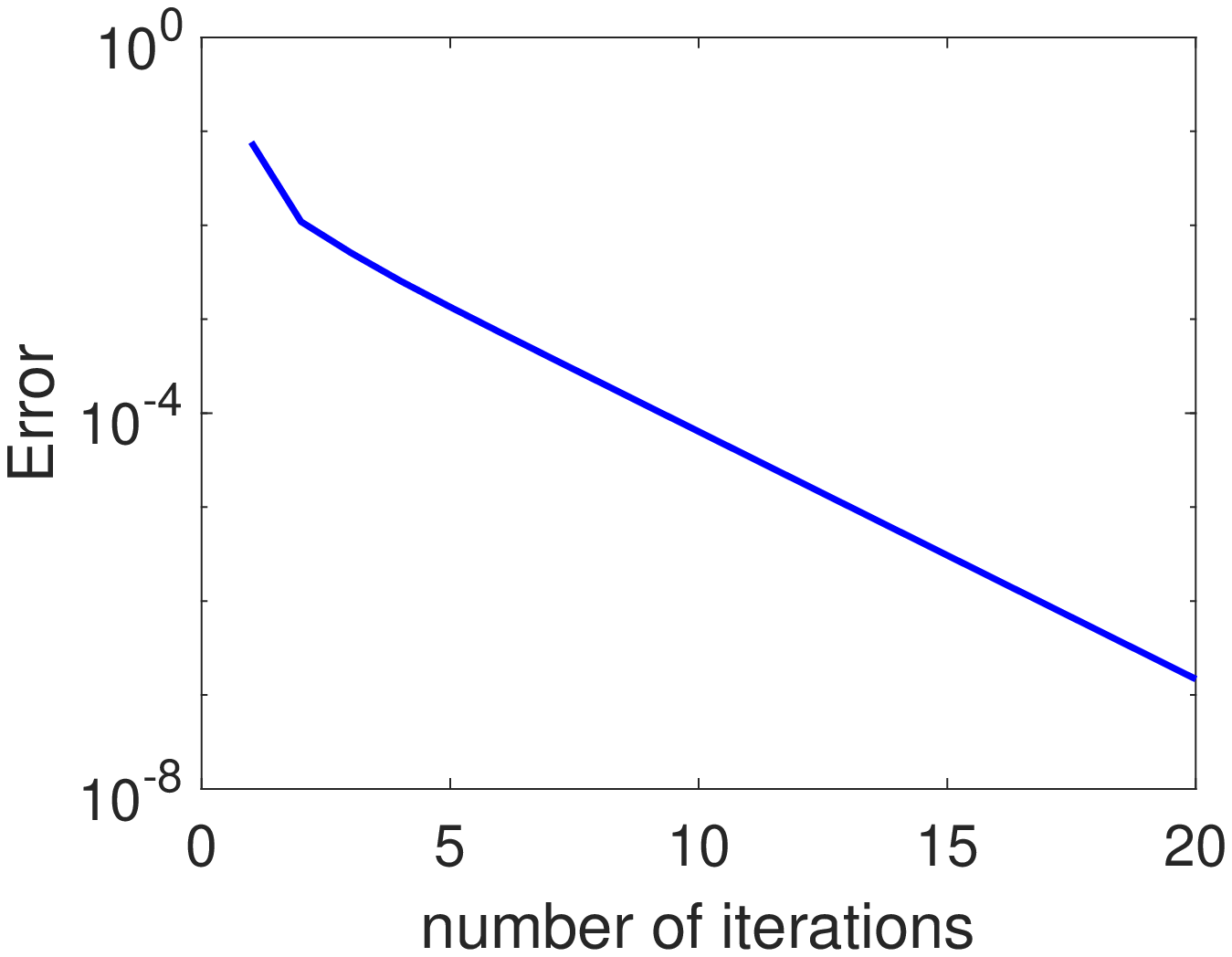}
 \end{minipage}
 \hspace{40pt}
\begin{minipage}[t]{0.40\linewidth}
   \includegraphics[width=3.3in]{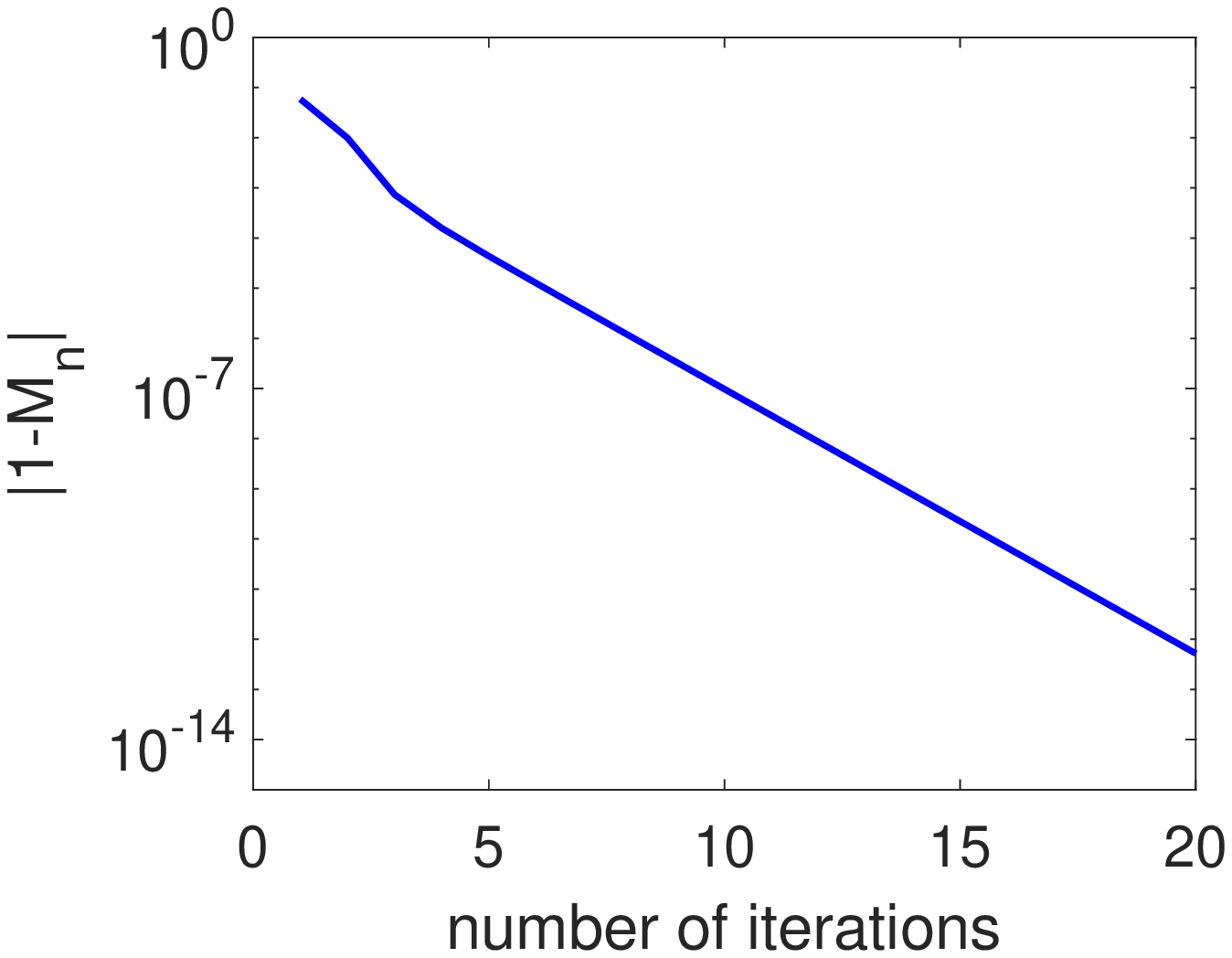}
 \end{minipage}
  \caption{The exact and the numerical solutions of  the fNLS equation with the wave frequency $\omega=1$  and  the variation of  \mbox{Error$(n)$}, $|1-M_n|$ and $RES$ with the number of iterations in semi-log scale.}
 \label{dif_er}
\end{figure}

\noindent  Applying the Fourier transform to the equation \eqref{EDO1} gives
\begin{equation}\label{iterate1}
\left(  |\xi|^{2s} + \omega  \right) \widehat{\varphi}(\xi)-  \widehat{\varphi^3}(\xi)=0, \quad \xi \in \mathbb{Z}.
\end{equation}
An iterative algorithm for numerical calculation of $\widehat{\psi}(\xi)$ for the equation \eqref{iterate1} can be proposed in the form
\begin{equation}\label{iterative1}
\widehat{\varphi}_{n+1}(\xi)=\frac{ \widehat{\varphi^3_n}(\xi)  }
{|\xi|^{2s} + \omega},\hspace{30pt} n\in\mathbb{N}
\end{equation}
where $\widehat{\varphi}_n(\xi)$ is the Fourier transform of ${\varphi}_n$ which is the $n^{\text{th}}$ iteration of the numerical solution. Here the solutions are constructed under the assumption
\begin{equation}\label{condition}
 |\xi|^{2s} + \omega \neq 0.
 \end{equation}

\noindent Since the above algorithm is usually divergent, we finally present the Petviashvilli's method as
\begin{equation}\label{iterative2}
\widehat{\varphi}_{n+1}(\xi)=\frac{(M_n)^{\nu}} {|\xi|^{2s} + \omega} \widehat{\varphi^3_n}(\xi)
\end{equation}
by introducing the stabilizing factor
\begin{equation}\label{sf}
  M_n=\frac{\big(({(-\Delta)}^{s} +\omega )\varphi_n, \varphi_n \big)_{L^2_{per}}}{(\varphi^3_n, \varphi_n)_{L^2_{per}}},~~~~~~~~~~ \hspace*{20pt} \varphi_n\in H^{2s}_{per}(\mathbb{T})
\end{equation}
%called of Petviashvili quotient. %% where $\langle\cdot ~, \cdot \rangle $ denotes the standard inner product in $L^2_{per} (\mathbb{T})$.
Here, the free parameter $\nu$ is chosen as $1.5$ for the fastest convergence.
\noindent
The iterative process is controlled by the error between two consecutive iterations given by
$$
  \mbox{Error}(n)=\|\varphi_n-\varphi_{n-1}\|_{L_{per}^{\infty}}
$$
and the stabilization factor error given by
$
|1-M_n|. $ The residue of the interaction process is determined by
$
{RES(n)}= \|{ \mathcal{S}} \varphi_n\|_{L_{per}^{\infty}},
$
where
$
{ \mathcal{S}}\varphi={(-\Delta)}^{s} \varphi +\omega \varphi -\varphi^3.  $

\begin{figure}[!htbp]
\includegraphics[width=3.8in]{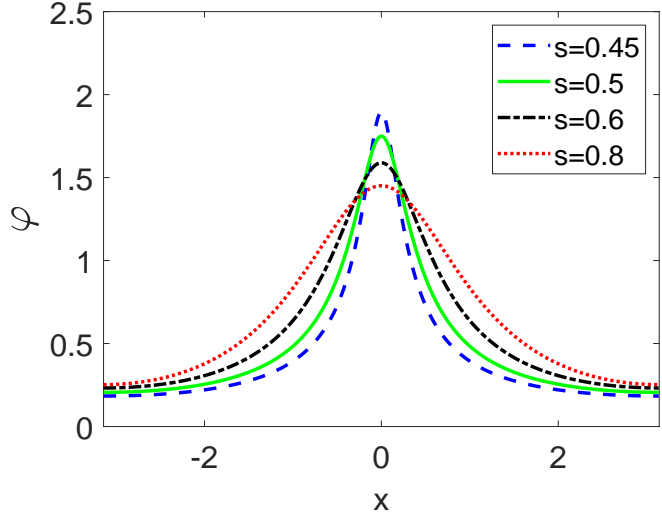}
  \caption{Numerical wave profiles for various values of $s \in (0,1)$ where $\omega=1$.}
 \label{waves}
\end{figure}

\indent The periodic standing wave solution of the fNLS equation  with $s=1$  is given in \cite{Angulo} as

\begin{equation}\label{dnsol}
  \varphi(x)=\eta_1 \mbox{dn}\left( \frac{\eta_1}{\sqrt{2}}x;\kappa \right),
\end{equation}
 where
  $ \kappa^2=\frac{\eta_1^2-\eta_2^2}{\eta_1^2},~~~~~~~~~~~\eta_1^2-\eta_2^2=2\omega, ~~~~~~~~~~~0<\eta_2<\eta_1$.
 Here the fundamental period is  $\displaystyle T_{\phi_\omega}= \frac{2\sqrt{2}}{\eta_1} {\rm K}(\kappa) $ where ${\rm K}(\kappa)$ is the complete elliptic integral of first kind.

\begin{figure}[h]
\hspace{-30pt}
 \begin{minipage}[t]{0.4\linewidth}
  \includegraphics[width=3.1in]{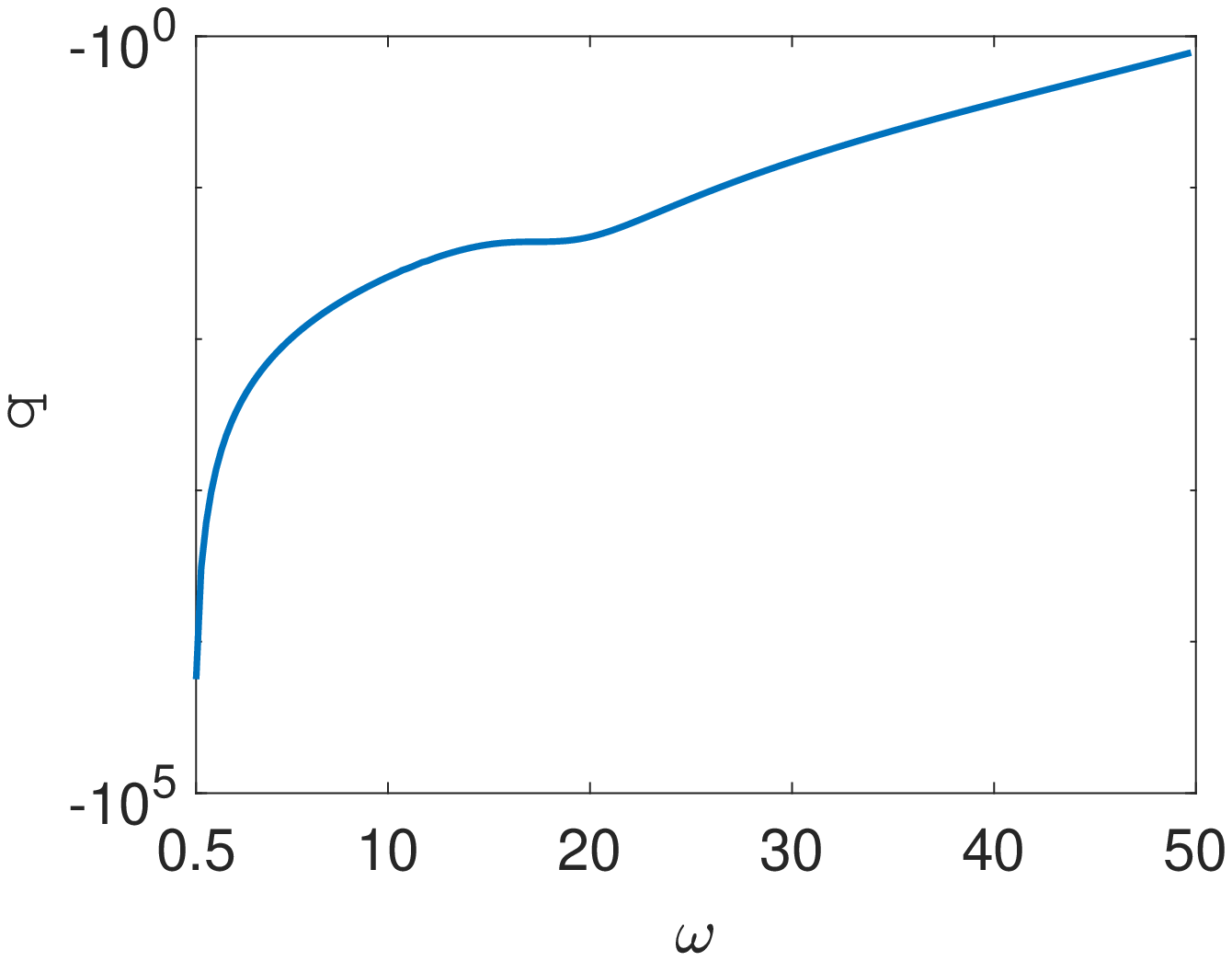}
 \end{minipage}
\hspace{30pt}
\begin{minipage}[t]{0.40\linewidth}
   \includegraphics[width=3.2in]{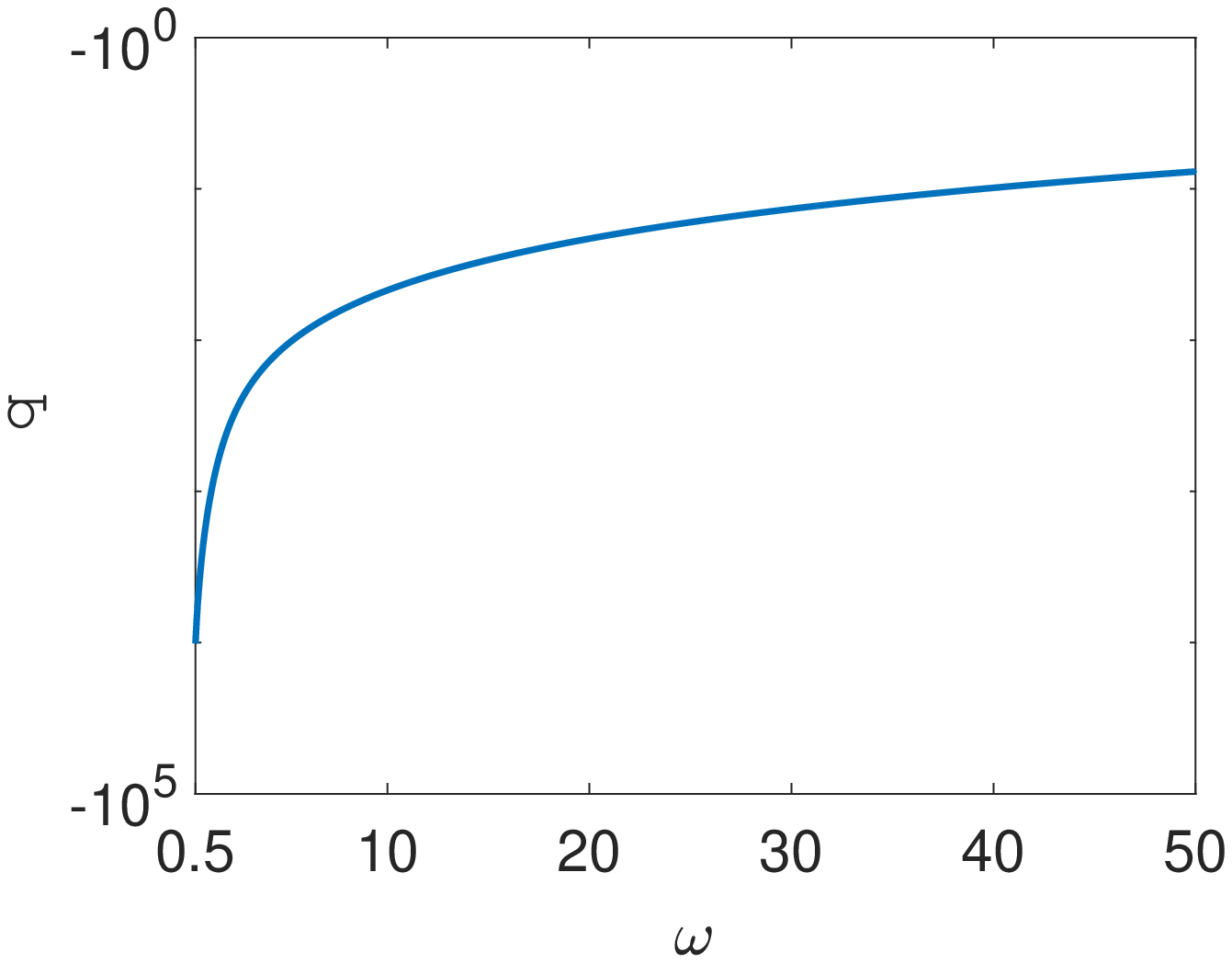}
 \end{minipage}
 \hspace{30pt}
  \begin{minipage}[t]{0.4\linewidth}
  \includegraphics[width=3.2in]{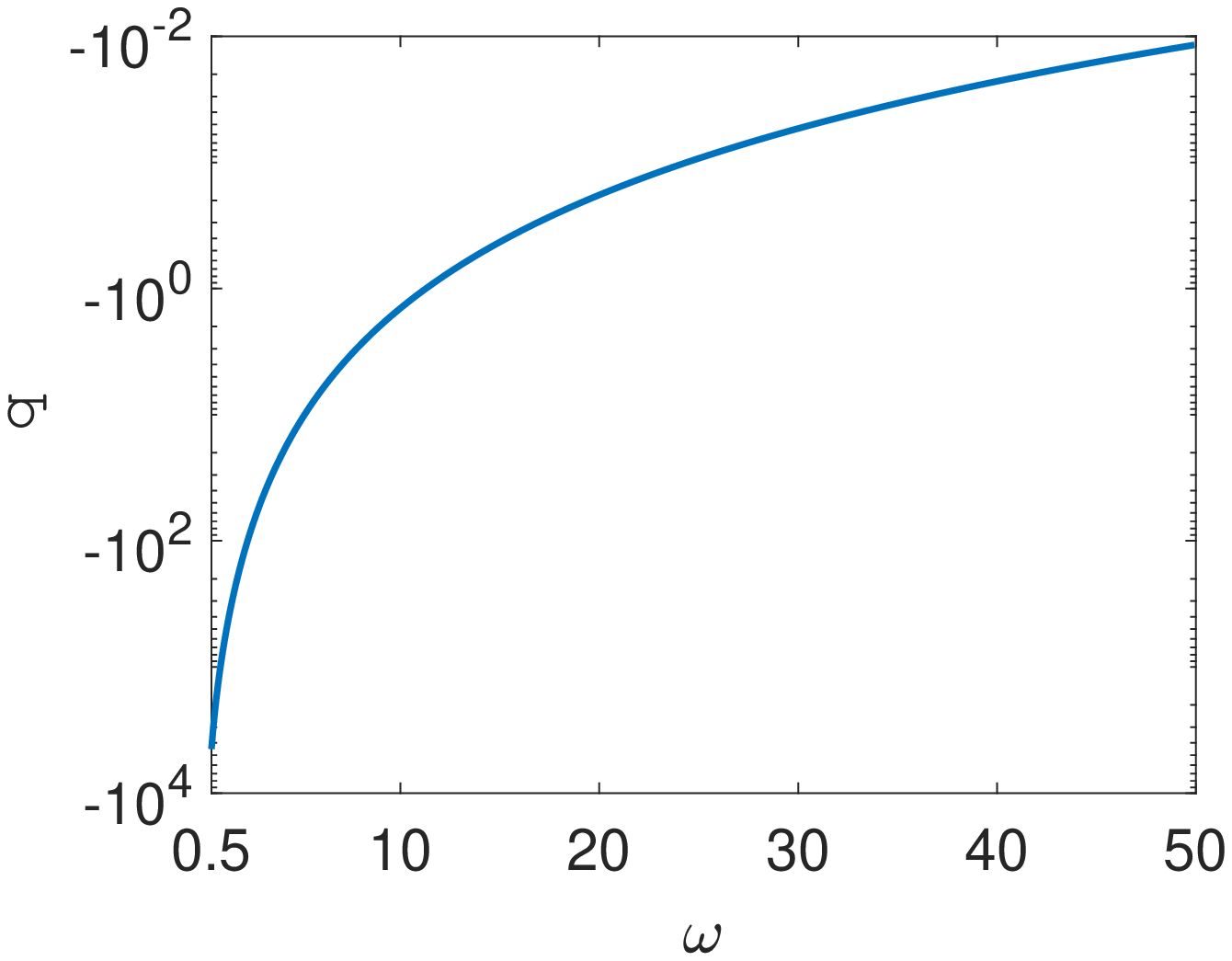}
 \end{minipage}
  \caption{The variation of $\textsf{q}$ (in logarithmic scale) with $\omega$ for $s=0.35$ (top left), $s=0.45$ (top right) and $s=0.5$ (bottom).}
 \label{hessiannegative}
\end{figure}

In order to test the accuracy of our scheme, we compare the exact solution \eqref{dnsol} with the numerical solution obtained by using \eqref{varphi-stokes2} as the initial guess.
The space interval is $ [-\pi,\pi] $ and number of grid points is chosen as $N=2^{10}$. In the first panel of   Figure \ref{dif_er}, we present the exact and numerical solutions for the frequency $\omega=1$. As it is seen from the figure, the exact and the numerical solutions coincide. In the other panels of Figure \ref{dif_er}, the variations of three different errors with the number of iteration are presented.  These results show that our numerical scheme captures the solution remarkably well.

The exact solutions of the fNLS equation are not known for $s\in (0,1)$. In Figure \ref{waves} we illustrate the periodic wave profiles for several values of $s \in (0,1)$ with $\omega=1$. The  nonlinear term becomes dominant with decreasing values of $s\in (0,1)$. Therefore, the wave steepens as expected. In the rest  of the numerical experiments, we investigate the sign of
$\textsf{q}$ in $(\ref{q})$ for different values of $s$.  The interval $\omega \in (1/2, 50]$ is discretized into $1000$ subintervals.
For each value of $\omega$, we generate the periodic wave profile by using the Petviashvili's iteration method on the interval
$ [-\pi,\pi] $ by choosing $N=2^{14}$. We use the forward difference method for the numerical differentiation with respect to $\omega$ after performing the numerical integration.

In Figure \ref{hessiannegative} we illustrate the variation of $\textsf{q}$ with $\omega>\tfrac{1}{2}$ for $s=0.35$, $s=0.45$ and $s=0.5$ where the $\textsf{q}-$axis is in the logarithmic scale. As it is seen from the figure, $\textsf{q}$ is negative. Numerical results indicate that the  periodic wave  is orbitally unstable for $ s \in (0, \tfrac{1}{2} ]$.

\begin{figure}[http]
\hspace{10pt}
 \begin{minipage}[t]{0.4\linewidth}
  \includegraphics[width=3.1in]{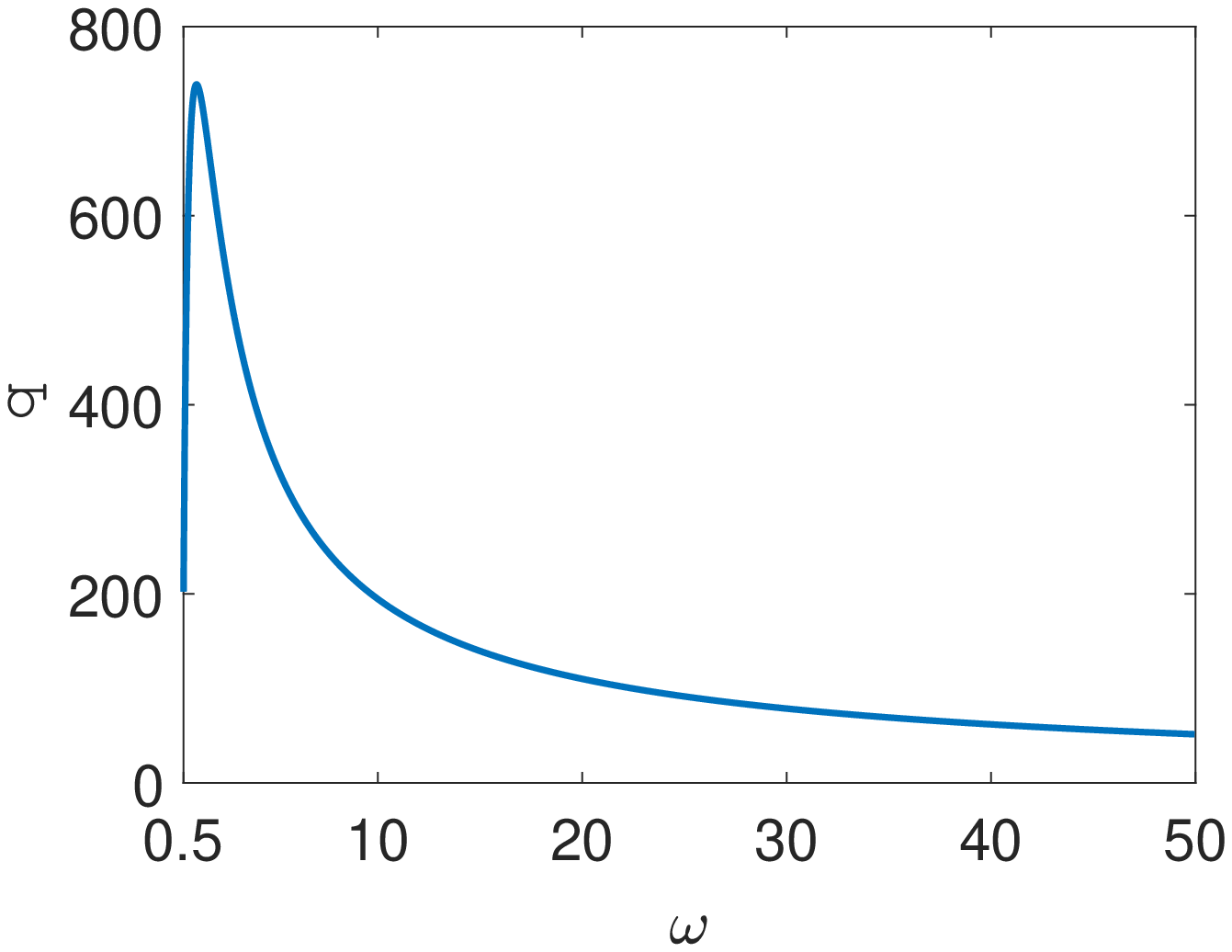}
 \end{minipage}
\hspace{30pt}
\begin{minipage}[t]{0.40\linewidth}
   \includegraphics[width=3.2in]{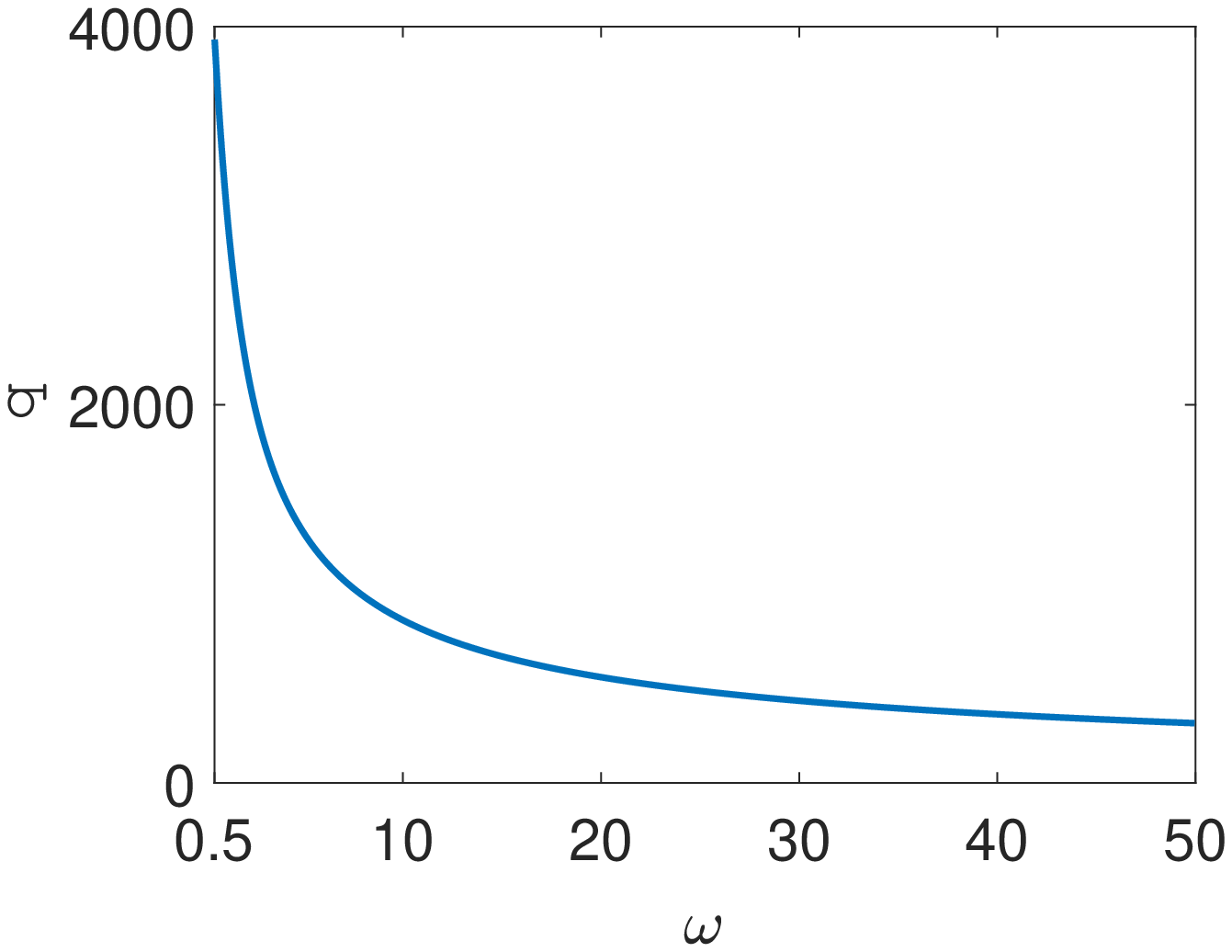}
 \end{minipage}
  \caption{The variation of $\textsf{q}$ with $\omega$ for $s=0.6$ (left panel) and $s=0.8$ (right panel).}
 \label{hessianpositive}
\end{figure}

The variation of $\textsf{q}$ with $\omega>\tfrac{1}{2}$ for $s=0.6$ and $s=0.8$ is depicted in Figure \ref{hessianpositive}.   Numerical results show the  orbital stability of the periodic wave for $s \in \left(\tfrac{1}{2}, 1 \right)$ as  $\textsf{q}$ is positive.

We have performed numerical experiment for several values of $s$. We observe that the value of $\textsf{q}$ is always negative for $s \leq 0.5$  and it  is always positive for $s \geq 0.57$. However, for the values $s\in(0.5, 0.57)$ the numerical results indicate that there is critical wave frequency $\omega_c$ such that $\textsf{q}$ is negative for $\omega<\omega_c$ and positive for $\omega>\omega_c$. In Figure \ref{hessiancritical} the variation of $\textsf{q}$ with $\omega>\tfrac{1}{2}$ for $s=0.52$ and $s=0.55$ is presented.

\begin{figure}[http]
\hspace{10pt}
 \begin{minipage}[t]{0.4\linewidth}
  \includegraphics[width=3.2in]{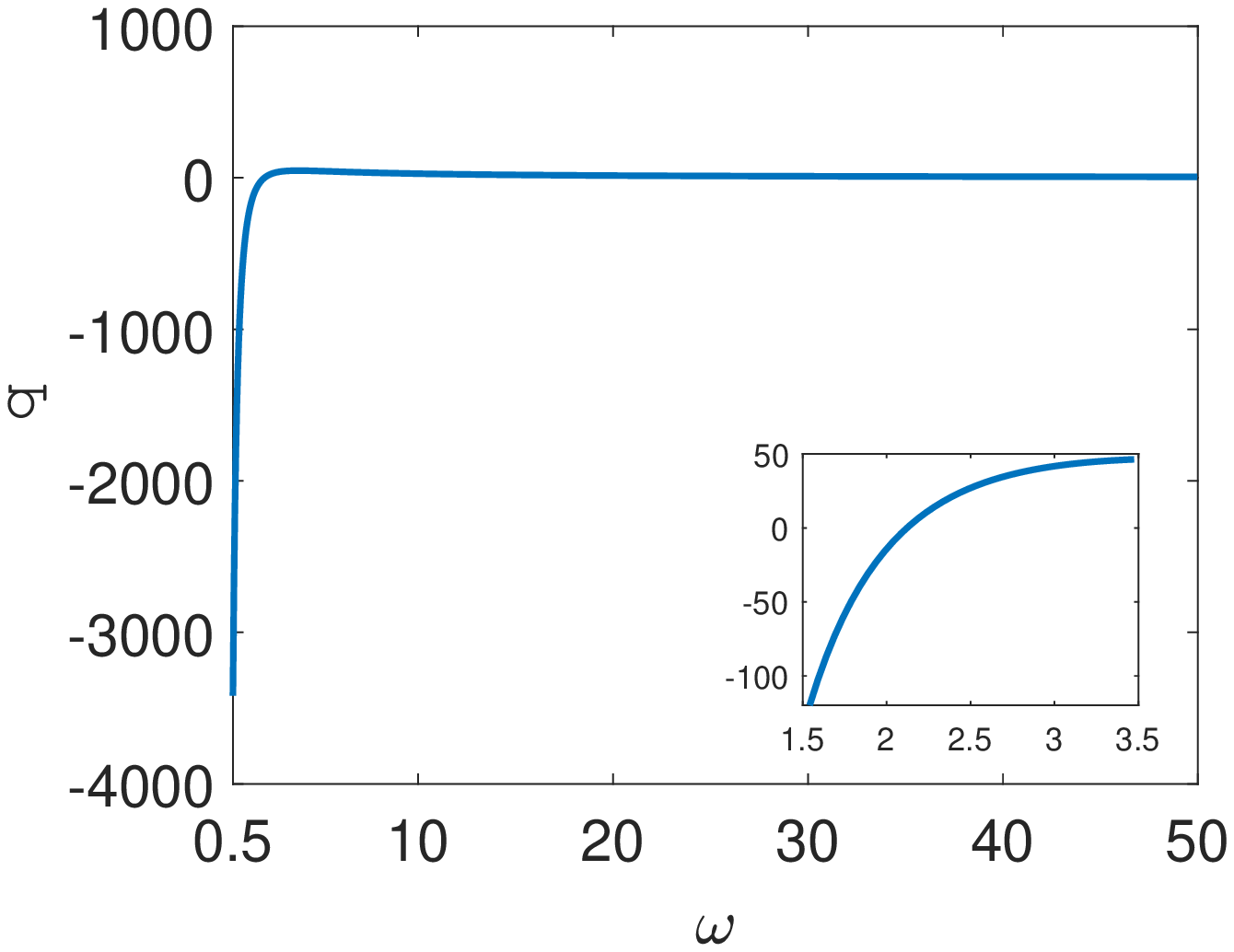}
 \end{minipage}
\hspace{50pt}
\begin{minipage}[t]{0.4\linewidth}
   \includegraphics[width=3.2in]{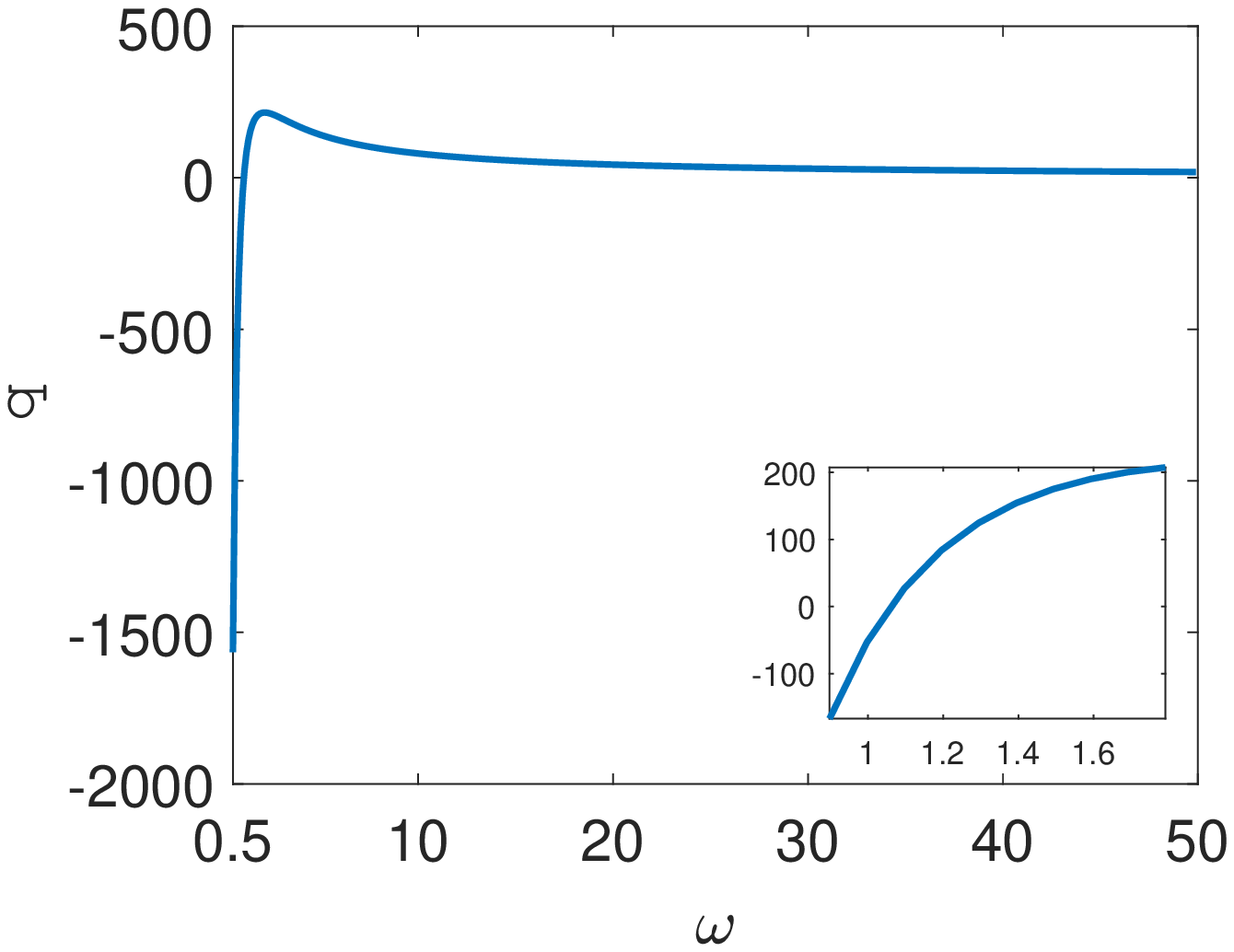}
 \end{minipage}
  \caption{The variation of $\textsf{q}$ with $\omega$ for $s=0.52$ (left panel), and $s=0.55$ (right panel).}
 \label{hessiancritical}
\end{figure}

\newpage

%\clearpage
\appendix
\section{Basic facts on positive operators}\label{Angulo-NataliSection}

Here, we  present important facts (verbatim) contained in \cite[Section 2]{Albert} and \cite{AnguloNatali2008}. Let the fixed $\omega>0$  represent the frequency of the periodic wave $\varphi$ which solves equation $(\ref{EDO1})$. For $\theta\geq 0$, we define the operator $S_\theta: \ell^2(\mathbb{Z}) \longrightarrow \ell^2(\mathbb{Z})$ by
$$
S_\theta \alpha(n)=\frac{1}{w_\theta(n)}\sum_{j \in \mathbb{Z}}\mathcal{K}(n-j)\alpha_j=\frac{1}{w_\theta(n)}(\mathcal{K}\ast \alpha)_n,
$$
for all $\alpha=(\alpha_n)_{n\in \mathbb{Z}} \in \ell^2(\mathbb{Z})$, where
$
w_\theta(n)=\beta(n)+\theta+\omega,$ $\mathcal{K}(n)=\widehat{3\varphi}(n),
$
and $\beta(n)$ represents the symbol of the fractional Laplacian given by \eqref{FLaplacian}. Since $w_\theta>0$, we have
$$
X:= \left\{ \alpha=(\alpha_n)_{n \in \mathbb{Z}} \in \ell^2(\mathbb{Z}); \; \|\alpha\|_{X,\theta}:= \left(\sum_{n \in \mathbb{Z}} |\alpha_n|^2w_\theta^2(n)\right)^{\frac{1}{2}}<\infty \right\}
$$
is a Hilbert space with norm $\|\cdot\|_{X,\theta}$ endowed by the inner product
$$
(\alpha_1,\alpha_2)_{X,\theta}= \sum_{n \in \mathbb{Z}} \alpha_{1,n} \overline{\alpha_{2,n}}w_\theta^2(n),
$$
where $\alpha_1=(\alpha_{1,n})_{n\in \mathbb{Z}}$ and $\alpha_2=(\alpha_{2,n})_{n\in \mathbb{Z}} \in \ell^2(\mathbb{Z}).$

\indent By \cite[Proposition 3.2]{AnguloNatali2008}, we see that $S_\theta: X \longrightarrow X$ is well-defined and it defines a compact and self-adjoint operator defined in $X$. By the spectral theorem for compact and self-adjoint operators, there exists an orthonormal basis $(\psi_{i,\theta})_{i \in \mathbb{N}} \subset X$ formed by eigenfunctions of the operator ${S_\theta}_{|_X}$ and corresponding eigenvalues $(\lambda_i(\theta))_{i \in \mathbb{N}} \subset \mathbb{R}$ whose only possible accumulation point is zero. In addition, the eigenvalues can be enumerated of the form
$$
|\lambda_0(\theta)| \geq  |\lambda_1(\theta)| \geq |\lambda_2(\theta)| \geq \cdots .
$$
According to \cite[Corollary 3.1]{AnguloNatali2008}, we have the following important result: $1$ is an eigenvalue of  $S_\theta$  if and only if $-\theta \leq 0$ is an eigenvalue of the operator $\mathcal{L}_1= (-\Delta)^s+\omega  -3\varphi^2$ (as an operator of $L^2_{per}$ with domain $D(\mathcal{L}_1)=H_{per}^{2s}$). Moreover, both eigenvalues have the same multiplicity.

Next, for $\theta \geq 0$   the operator $T_\theta: Y \longrightarrow Y$ is defined by
\begin{equation}\label{Ttheta}
	T_\theta (g):=\left((-\Delta)^s+\theta+\omega\right)^{-1}(3\varphi^2g),
\end{equation}
where
\begin{equation}\label{Y}
	Y:=\left\{ g: \mathbb{R} \longrightarrow \mathbb{C} \; ; \; g \: \text{is $2\pi$-periodic and} \; \|g\|_{Y}:= \left(\frac{1}{2\pi}\int_{-\pi}^{\pi} |g|^2\varphi^2 \; dx\right)^{\frac{1}{2}} < \infty \right\}
\end{equation}
is a Hilbert space with norm $\|\cdot\|_{Y}$ endowed with the inner product
$$
\langle g, h \rangle_Y=\frac{1}{2\pi}\int_{-\pi}^{\pi} g(x)\overline{h(x)} \varphi^2(x)\; dx,
$$
where $g,h \in Y.$
\begin{lemma}\label{prop313-natali}
	Concerning the operator $T_\theta : Y \rightarrow Y$ given by \eqref{Ttheta}, we have
	\begin{itemize}
		\item [(i)] If $g \in Y$ is an eigenfunction of $T_\theta$ for some nonzero eingenvalue, then $g \in {L}^2_{per}$.
		
		\item [(ii)] $T_\theta$ is a compact and self-adjoint operator.
	\end{itemize}
\end{lemma}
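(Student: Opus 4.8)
The plan is to exploit the smoothing produced by the resolvent $R:=\left((-\Delta)^s+\theta+\omega\right)^{-1}$ together with two elementary continuous embeddings between $Y$ and $L^2_{per}$, both consequences of the boundedness of $\varphi$ (recall $\varphi\in H^\infty_{per}$). On the one hand, multiplication by $3\varphi^2$ maps $Y$ continuously into $L^2_{per}$, since $\|3\varphi^2 g\|_{L^2_{per}}^2=9\int_{-\pi}^{\pi}\varphi^2\,\varphi^2|g|^2\,dx\le 9\|\varphi\|_{L^\infty}^2(2\pi)\|g\|_Y^2$. On the other hand $\|f\|_Y^2\le\tfrac{1}{2\pi}\|\varphi\|_{L^\infty}^2\|f\|_{L^2_{per}}^2$, so the inclusion $L^2_{per}\hookrightarrow Y$ is continuous as well. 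These two facts drive the entire argument, and they also show at once that $T_\theta$ maps $Y$ into $H^{2s}_{per}\subset L^2_{per}\subset Y$, so $T_\theta$ is well defined and bounded on $Y$.

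For item $(i)$ I would rewrite the eigenvalue relation: if $T_\theta g=\lambda g$ with $\lambda\neq0$, then $g=\lambda^{-1}R(3\varphi^2 g)$. By the estimate above $3\varphi^2 g\in L^2_{per}$, and $R$ maps $L^2_{per}$ continuously into $H^{2s}_{per}$ because its Fourier symbol $(|k|^{2s}+\theta+\omega)^{-1}$ is bounded and gains $2s$ derivatives (here $\omega>0$ keeps the symbol strictly positive). Hence $g\in H^{2s}_{per}\subset L^2_{per}$, which is the claim.

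For the self-adjointness in item $(ii)$ I would reduce everything to the self-adjointness of $R$ in $L^2_{per}$, which holds since the symbol of $(-\Delta)^s+\theta+\omega$ is real and even. Using that $\varphi$ is real, so that $\overline{h}\,\varphi^2=\overline{\varphi^2 h}$, one gets $\langle T_\theta g,h\rangle_Y=\tfrac{1}{2\pi}\left(R(3\varphi^2 g),\varphi^2 h\right)_{L^2_{per}}$; transferring $R$ to the second factor and using $\overline{Rf}=R\overline{f}$ produces exactly $\langle g,T_\theta h\rangle_Y$. All these inner products are finite because $\varphi^2 g,\varphi^2 h\in L^2_{per}$ by the opening estimate.

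The compactness is the step that deserves the most care and is where I expect the only genuine obstacle. My plan is to factor $T_\theta$ as
$$Y\xrightarrow{\,3\varphi^2\cdot\,}L^2_{per}\xrightarrow{\,R\,}H^{2s}_{per}\xrightarrow{\,\iota\,}L^2_{per}\hookrightarrow Y,$$
where only the arrow $\iota$ is the Sobolev embedding $H^{2s}_{per}\hookrightarrow L^2_{per}$, which is compact since $2s>0$. Concretely, given a bounded sequence $(g_n)\subset Y$, the first two maps show $(T_\theta g_n)$ is bounded in $H^{2s}_{per}$; compactness of $\iota$ extracts a subsequence converging in $L^2_{per}$; and the continuous inclusion $L^2_{per}\hookrightarrow Y$ upgrades this to convergence in $Y$. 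The delicate point is precisely this last upgrade, namely that convergence in the unweighted $L^2_{per}$-norm transfers to the weighted $Y$-norm; it is guaranteed exactly by the boundedness of $\varphi$. Notably, the whole scheme never uses that $\varphi$ is bounded away from zero, so it applies verbatim in the general framework of the appendix.
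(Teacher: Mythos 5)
Your proposal is correct, and part (i) coincides with the paper's argument (the paper performs the same computation directly on Fourier coefficients, using $\varphi^2 g\in L^2_{per}$ and the uniform positivity of the symbol $w_\theta(n)=|n|^{2s}+\theta+\omega$). For part (ii), however, you take a genuinely different route. The paper proves compactness by exhibiting $T_\theta$ as an integral operator,
\begin{equation*}
T_\theta g(x)=\frac{1}{2\pi}\int_{-\pi}^{\pi}G_\theta(x-y)\,g(y)\,\varphi^2(y)\,dy,\qquad \widehat{G_\theta}(n)=\frac{1}{w_\theta(n)},
\end{equation*}
and checking that the kernel is square-integrable with respect to the weighted measure, so that $T_\theta$ is Hilbert--Schmidt; this step needs $G_\theta\in L^2_{per}$, i.e. $\sum_{n}w_\theta(n)^{-2}<\infty$, which is exactly where the standing restriction $s>\tfrac14$ enters, and self-adjointness is then read off from the evenness of $G_\theta$ together with Fubini's theorem. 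Your argument instead factors $T_\theta$ through the chain $Y\to L^2_{per}\to H^{2s}_{per}\to L^2_{per}\to Y$ and invokes only the compactness of the Sobolev embedding $H^{2s}_{per}\hookrightarrow L^2_{per}$, while self-adjointness is reduced to that of the Fourier multiplier $\left((-\Delta)^s+\theta+\omega\right)^{-1}$ on $L^2_{per}$. The trade-off: the paper's proof yields the nominally stronger conclusion that $T_\theta$ is Hilbert--Schmidt, whereas yours is more elementary and more general, since it requires no summability of the symbol and hence works for every $s\in(0,1]$ (and, as you note, both proofs use only $\varphi\in L^\infty_{per}$, never a positive lower bound on $\varphi$). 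Both compactness arguments share the same delicate endpoint, the passage from unweighted $L^2_{per}$-convergence back to $Y$-convergence, which your two embedding estimates handle correctly.
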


\begin{proof}
	\noindent \textit{(i)} Consider $\lambda \neq 0$ satisfying $T_\theta g = \lambda g$. For all $n\in \mathbb{Z}$, we have
	\begin{equation}\label{equalg}
		\widehat{g}(n) = \frac{1}{\lambda} \widehat{T_\theta g}(n) = \frac{1}{\lambda} \frac{1}{w_\theta (n)} \widehat{(3\varphi^2 g)}(n).
	\end{equation}
	
	The fact $g \in Y$ implies that $\varphi^2 g \in {L}^2_{per}$. From the definition of $\omega_\theta$, we obtain by $(\ref{equalg})$ and the Parseval identity (see \cite[Corollary 2.54]{IorioIorio}) that $g \in {L}_{per}^2$, as desired.
	
	\noindent \textit{(ii)} The action of $T_\theta$ over the space $Y$ is given by
	\begin{equation*}
		T_\theta g(x) = \frac{1}{2\pi} \int_{-\pi}^{\pi} G_\theta(x-y) g(y) d\nu (y),
	\end{equation*}
	where $d\nu (y) = \varphi^2(y) dy$ determines a positive measure over the interval $[-\pi,\pi]$ and $G_\theta$ is defined as
	$$\widehat{G_\theta}(n) = \frac{1}{w_\theta(n)}, \;\; n \in  \mathbb{Z}.$$
	Using the Parseval identity we obtain $G_\theta \in {L}^2_{per}$, so that $\tilde{G}_\theta(x,y) = G_\theta(x-y) \in Y\times Y$ and $T_\theta$ is a Hilbert-Schmidt operator. By using \cite[page 264]{Kato}, we see that $T_\theta$ is a compact operator.\\
	\indent On other hand, as $w_\theta$ is an even function, we have that $G_\theta$ is also an even function and, as a consequence, $T_\theta$ is a bounded self-adjoint operator since
	\begin{align*}
		\langle h, T_\theta g \rangle_Y & = \frac{1}{2\pi} \int_{-\pi}^{\pi} h(x) \overline{T_\theta g(x)} \varphi^2(x) dx \\
		& = \frac{1}{2\pi} \int_{-\pi}^{\pi} h(x) \left[ \frac{1}{2\pi} \int_{-\pi}^{\pi} G_\theta(x-y) \overline{g(y)} \varphi^2 dy \right] \varphi^2(x) dx \\
		& = \frac{1}{2\pi} \int_{-\pi}^{\pi} \left[ \frac{1}{2\pi} \int_{-\pi}^{\pi} G_\theta(y-x) h(x) \varphi^2(x) dx \right] \overline{g(y)} \varphi^2(y) dy \\
		& = \langle T_\theta h, g \rangle_Y,
	\end{align*}
	for all $g, h \in Y$.
\end{proof}

By using Lemma $\ref{prop313-natali}$ and the spectral theorem for compact self-adjoint operators, we guarantee the existence of an orthonormal complete set $(\xi_i)_{i \in \mathbb{N}} \subset Y$ formed by eigenfunctions of $T_\theta$ and satisfying
\begin{equation}\label{eigenvalueT}
	T_\theta(\xi_i)=\lambda_i(\theta) \xi_i,
\end{equation}
$\text{for all} \; i \in \mathbb{N}.$

The next result gives us a relationship between the eigenvalues of $T_\theta$ and $S_\theta$.

\begin{lemma}\label{prop314-natali}
	Suppose that ${\rm Ker}(T_\theta) = \{0\}$. Consider $(\xi_i)_{i \in \mathbb{N}}$ an orthonormal complete set of eigenfunctions of $T_\theta$ in $Y$ such that $T_\theta \xi_i = \lambda_i \xi_i$ for all $i \in \mathbb{N}$. Then, $( \sqrt{|\lambda_i(\theta)|} \widehat{\xi_i} )_{i\in \mathbb{N}}$ forms an orthonormal complete set in $X$ constituted by eigenfunctions of $S_\theta$ such that
	$
	S_\theta \widehat{\xi_i} = \lambda_i(\theta) \widehat{\xi_i}
	$
	$\text{for all} \; \; i \in \mathbb{N}.$
\end{lemma}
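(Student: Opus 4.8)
The plan is to exhibit the Fourier transform $\mathcal{F}\colon g \mapsto \widehat{g}$ as the operator that \emph{intertwines} $T_\theta$ acting on $Y$ with $S_\theta$ acting on $X$, and then to transport the spectral decomposition of $T_\theta$ furnished by Lemma \ref{prop313-natali} and the spectral theorem over to $S_\theta$ through $\mathcal{F}$. First I would record the fundamental identity
\begin{equation*}
\widehat{T_\theta g}(n) = \frac{1}{w_\theta(n)}\,\widehat{3\varphi^2 g}(n) = \frac{1}{w_\theta(n)}(\mathcal{K}\ast\widehat{g})(n) = S_\theta(\widehat{g})(n), \qquad n \in \mathbb{Z},
\end{equation*}
which is nothing more than the fact that multiplication by $3\varphi^2$ turns into convolution with the kernel $\mathcal{K}=\widehat{3\varphi^2}$ on the Fourier side, followed by division by the symbol $w_\theta$ coming from $((-\Delta)^s+\theta+\omega)^{-1}$. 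Applying this to $g=\xi_i$ and using $T_\theta\xi_i=\lambda_i(\theta)\xi_i$ gives at once $S_\theta\widehat{\xi_i}=\widehat{T_\theta\xi_i}=\lambda_i(\theta)\widehat{\xi_i}$, so each $\widehat{\xi_i}$ is an eigenfunction of $S_\theta$ with the \emph{same} eigenvalue, which is precisely the eigenvalue relation asserted in the statement.

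Next I would verify that $\widehat{\xi_i}$ genuinely belongs to $X$, so that the scaled vectors are well defined. Because the hypothesis ${\rm Ker}(T_\theta)=\{0\}$ forces every $\lambda_i(\theta)\neq 0$, the identity $\xi_i=\lambda_i(\theta)^{-1}((-\Delta)^s+\theta+\omega)^{-1}(3\varphi^2\xi_i)$ together with Lemma \ref{prop313-natali}(i) (which already places $\xi_i\in L^2_{per}$) upgrades the regularity of $\xi_i$ to $H^{2s}_{per}$; equivalently $(w_\theta(n)\widehat{\xi_i}(n))_n\in\ell^2(\mathbb{Z})$, which is exactly the condition $\|\widehat{\xi_i}\|_{X,\theta}<\infty$. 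In particular the factor $\sqrt{|\lambda_i(\theta)|}$ is finite and strictly positive, so the family $(\sqrt{|\lambda_i(\theta)|}\,\widehat{\xi_i})_i\subset X$ makes sense.

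The core of the argument is then the orthonormality and completeness of this family in $X$. For orthonormality I would relate the two inner products via Parseval's identity and the eigenvalue equation: rearranging $T_\theta\xi_j=\lambda_j(\theta)\xi_j$ on the Fourier side gives $\widehat{3\varphi^2\xi_j}(n)=\lambda_j(\theta)\,w_\theta(n)\,\widehat{\xi_j}(n)$, and substituting this into the Parseval transform of $\langle\xi_i,\xi_j\rangle_Y=\tfrac{1}{3}\langle \xi_i, 3\varphi^2\xi_j\rangle_{L^2_{per}}$ rewrites the $Y$-pairing as a $w_\theta$-weighted sum of $\widehat{\xi_i}(n)\overline{\widehat{\xi_j}(n)}$ carrying the factor $\lambda_j(\theta)$. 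Since $(\xi_i)_i$ is orthonormal in $Y$, this is exactly the computation that shows $(\sqrt{|\lambda_i(\theta)|}\,\widehat{\xi_i})_i$ is orthonormal in $X$, the weight $\sqrt{|\lambda_i(\theta)|}$ being precisely what absorbs the eigenvalue produced in the transfer. For completeness I would invoke that $\mathcal{F}$ is a bounded isomorphism of $Y$ onto $X$ (as already noted in the proof of Lemma \ref{simpleKerneleven}): an isomorphism carries the complete system $(\xi_i)_i$ of $Y$ onto a complete system $(\widehat{\xi_i})_i$ of $X$, and the triviality of ${\rm Ker}(T_\theta)$ ensures no eigenfunction is lost to a vanishing eigenvalue.

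I expect the genuine obstacle to lie in this last step, namely making the Parseval computation yield exactly the Kronecker normalization and confirming that completeness in $Y$ really does pass to completeness of the \emph{scaled} system in $X$ rather than merely to a dense eigenspan. This is where one must combine the spectral theorem for the compact self-adjoint operators $T_\theta$ and $S_\theta$, the matching of spectra with identical multiplicities coming from the intertwining identity, and the isomorphism property of $\mathcal{F}$; by contrast, the algebraic intertwining and the membership $\widehat{\xi_i}\in X$ are routine once the Fourier identity above is in hand.
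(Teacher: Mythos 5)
Your first three steps match the paper's proof: the intertwining identity $\widehat{T_\theta g}=S_\theta\widehat{g}$ and the resulting relation $S_\theta\widehat{\xi_i}=\lambda_i(\theta)\widehat{\xi_i}$ are exactly the paper's opening move; your bootstrap placing $\widehat{\xi_i}$ in $X$ (via $\xi_i=\lambda_i(\theta)^{-1}T_\theta\xi_i\in H^{2s}_{per}$) is a self-contained substitute for the paper's citation of \cite[Proposition 3.2]{AnguloNatali2008}; and your Parseval-plus-eigenvalue-relation computation for orthonormality is the paper's identity $(\alpha,\widehat{\xi_i})_{X,\theta}=\lambda_i^{-1}(\widecheck{\alpha},\xi_i)_Y$ (tested on $\alpha=\lambda_j\widehat{\xi_j}$) read in the opposite direction. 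The genuine gap is in the completeness step. The claim that $\mathcal{F}\colon Y\to X$ is a bounded isomorphism is false: since $\varphi$ is continuous, periodic and strictly positive, the $Y$-norm is equivalent to the $L^2_{per}$-norm, so $\mathcal{F}$ carries $Y$ onto $\ell^2(\mathbb{Z})$; but $X$ is the proper dense subspace of sequences with $\sum_n w_\theta^2(n)|\alpha_n|^2<\infty$, carrying a strictly stronger norm (it is the Fourier image of $H^{2s}_{per}$, not of $L^2_{per}$). Thus $\mathcal{F}$ does not even map $Y$ into $X$, and there is no isomorphism through which to transport totality: density of ${\rm span}\{\xi_i\}$ in $Y$ yields at best density of ${\rm span}\{\widehat{\xi_i}\}$ in the $\ell^2$ topology, which is precisely the ``dense eigenspan'' worry you raise yourself and then leave unresolved. (The remark in the proof of Lemma \ref{simpleKerneleven} that you lean on is invoked there only to invert $\mathcal{F}$ on one specific sequence, and for $\theta=0$; it cannot serve as an isomorphism between these two spaces.)

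The paper closes this step without any isomorphism, by running the orthogonality transfer once more, in the only direction in which a bounded map actually exists, namely $X\to Y$, $\alpha\mapsto\widecheck{\alpha}$. Suppose $\alpha\in X$ satisfies $(\alpha,\widehat{\xi_i})_{X,\theta}=0$ for all $i$. The eigenvalue relation $w_\theta(n)\widehat{\xi_i}(n)=\lambda_i(\theta)^{-1}\widehat{3\varphi^2\xi_i}(n)$ trades the weight in the $X$-pairing for the multiplier $3\varphi^2$ on the function side, so that $(\alpha,\widehat{\xi_i})_{X,\theta}$ becomes a nonzero multiple of a $Y$-inner product of the inverse transform of $\alpha$ against $\xi_i$. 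Completeness of $(\xi_i)_i$ in $Y$ then forces that inverse transform to vanish in $Y$; since $\varphi>0$ it vanishes almost everywhere, and injectivity of the Fourier transform on $\ell^2$ gives $\alpha=0$. The point your argument misses is that it is the eigenvalue relation --- not any topological property of $\mathcal{F}$ --- that absorbs the unbounded weight $w_\theta$ and makes the two orthogonality notions communicate; without it, orthogonality in the weighted space $X$ simply cannot be compared to orthogonality in $Y$.
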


\begin{proof}
	The condition $\text{Ker}(T_\theta) = \{0\}$ implies that the eigenvalues of $T_\theta$ are non-zero. In addition, by Lemma \ref{prop313-natali} we have $\xi_i \in L^2_{per}$ for all $i \in \mathbb{N}$. By Parseval identity we obtain $\widehat{\xi_i} \in \ell^2 (\mathbb{Z})$ and thus
	\begin{equation*}
		S_\theta \widehat{\xi_i}(n) = \frac{1}{w_\theta(n)} \sum_{i \in \mathbb{N}} \mathcal{K}(n-j) \widehat{\xi_i}(j) = \widehat{T_\theta \xi_i}(n) = \lambda_i \widehat{\xi_i}.
	\end{equation*}
	Then, by \cite[Proposition 3.2]{AnguloNatali2008} we conclude that $\widehat{\xi_i} \in X$ for all $i \in \mathbb{N}$.
	
	We will prove that $\left( \sqrt{|\lambda_i(\theta)|} \widehat{\xi_i} \right)_{i\in \mathbb{N}}$ is an orthonormal complete set. Consider \mbox{$\alpha = (\alpha_n)_{n \in \mathbb{Z}} \in X$}. Using Parseval identity, we have
	\begin{equation*}
		\frac{1}{2\pi} \int_{-\pi}^{\pi} |\widecheck{\alpha}(x)|^2 \varphi^2(x) dx \leq \frac{C}{2\pi} \int_{-\pi}^{\pi} |\widecheck{\alpha}(x)|^2 dx = C \sum_{n=-\infty}^{+\infty} |\alpha_n|^2 < +\infty,
	\end{equation*}
	for some constant $C>0$. Thus, $\alpha \in X$ implies that $\widecheck{\alpha} \in Y$. Using similar arguments, it is possible to show that $g \in Y$ implies $\widehat{g} \in X$.
	%($C = \sup_{x \in [0,L] |\varphi^2(x)|$)
	
	Next, let $\alpha \in X$ be fixed. By Parseval indentity and using the arguments above, we can conclude that
	$
	\langle \alpha, \widehat{\xi_i} \rangle_{X,\theta} = \lambda_i^{-1} \langle \widecheck{\alpha}, \xi_i \rangle_{Y}.
	$
	Considering $\alpha = \lambda_j \widehat{\xi_j}$, we obtain the basic and useful equality given by
	$
	\langle \lambda_j \widehat{\xi_j}, \widehat{\xi_i} \rangle_{X,\theta} = \frac{\lambda_j}{\lambda_i} \langle \xi_j, \xi_i \rangle_Y.
	$
	Now, suppose that $\alpha \in X$ satisfies $\langle \alpha, \widehat{\xi_i} \rangle_{X, \theta} = 0$ for all $i\in \mathbb{N}$. Thus, $\langle \widecheck{\alpha}, \xi_i \rangle_Y = 0$ for all $i \in \mathbb{N}$. Since $(\xi_i)_{i \in \mathbb{N}}$ is a orthonormal complete set in $Y$, we have that $\widecheck{\alpha} = 0$ in $Y$. On other hand, since $\alpha \in X$ and $Y \hookrightarrow {L}^2_{per}$, we have that $\widecheck{\alpha} = 0$ in ${L}^2_{per}$, that is, $\alpha = 0$ in $\ell^2(\mathbb{Z})$, and therefore $\alpha = 0$ in $X$.
\end{proof}

\begin{remark}\label{remT0}It should be noticed that, if $\theta=0$ then $T_\theta=T_0$ is one-to-one. Indeed, for a given $g \in Y$ such that $T_0(g)=0$, then $\left((-\Delta)^s+\omega\right)^{-1}(3\varphi^2g)=0$ which implies $g=0$. This information is very useful since  we have that $(\xi_i)_{i \in \mathbb{N}} \subset Y$ is an orthonormal complete set in $Y$.
\end{remark}

%\begin{remark}\label{isomorphismfourier}
%The spaces $X$ and $Y$ are isomorphics by means $\mathcal{F}:Y \rightarrow X$, where $\mathcal{F}f=\widehat{f}$, for every $f \in Y$.
%\end{remark}

%\begin{remark}\label{remarkKernel}
%The analysis above was done taking into account the operator $\mathcal{L}_1$ and the kernel  $\mathcal{K}=\widehat{3\varphi^2}$, but it can be done in an analogous way for the operator $\mathcal{L}_2$  and the kernel  $\mathcal{K}=\widehat{\varphi^2}$, since there is no change in the theory that  presented.
%\end{remark}

\section*{Acknowledgments}
G. de Loreno and G. E. B. Moraes are supported by the regular doctorate scholarship from CAPES/Brazil. F. Natali is partially supported by Funda\c c\~ao Arauc\'aria/Brazil (grant 002/2017), CNPq/Brazil (grant 304240/2018-4) and CAPES MathAmSud (grant 88881.520205/2020-01).

\end{document}